\documentclass[11pt,a4paper]{amsart}
%\usepackage[margin=2.2cm]{geometry}
%\linespread{1.1}
%\documentclass{amsart}
\usepackage[margin=2.8cm]{geometry}
\linespread{1.025}

\usepackage{amssymb}
\usepackage{graphicx} 
\usepackage{mathrsfs}
\usepackage{enumerate}
\usepackage{xspace}
\usepackage{color}
\usepackage{enumerate}
\usepackage{enumitem}
\usepackage{amsthm}
\usepackage{dsfont}
\usepackage{bbm}
\usepackage[colorlinks=true, linkcolor = blue, citecolor = blue]{hyperref}
\usepackage[numbers]{natbib}
\usepackage[backgroundcolor=white,bordercolor=red]{todonotes}
\DeclareMathAlphabet{\mathpzc}{OT1}{pzc}{m}{it}
\usepackage[nameinlink]{cleveref}
\numberwithin{equation}{section}
\begin{document}

\theoremstyle{plain}

\newtheorem{theorem}{Theorem}[section]
\newtheorem{lemma}[theorem]{Lemma}
\newtheorem{example}[theorem]{Example}
\newtheorem{proposition}[theorem]{Proposition}
\newtheorem{corollary}[theorem]{Corollary}
\newtheorem{definition}[theorem]{Definition}
\newtheorem{Ass}[theorem]{Assumption}
\newtheorem{condition}[theorem]{Condition}
\theoremstyle{definition}
\newtheorem{remark}[theorem]{Remark}
\newtheorem{SA}[theorem]{Standing Assumption}

%Stochastic Intervals
\newcommand{\of}{[\hspace{-0.06cm}[}
\newcommand{\gs}{]\hspace{-0.06cm}]}

%Lebesgue Measure
\newcommand\llambda{{\mathchoice
		{\lambda\mkern-4.5mu{\raisebox{.4ex}{\scriptsize$\backslash$}}}
		{\lambda\mkern-4.83mu{\raisebox{.4ex}{\scriptsize$\backslash$}}}
		{\lambda\mkern-4.5mu{\raisebox{.2ex}{\footnotesize$\scriptscriptstyle\backslash$}}}
		{\lambda\mkern-5.0mu{\raisebox{.2ex}{\tiny$\scriptscriptstyle\backslash$}}}}}

%Indikator
\newcommand{\1}{\mathds{1}}

%Filtrations
\newcommand{\F}{\mathbf{F}}
\newcommand{\G}{\mathbf{G}}

\newcommand{\B}{\mathbf{B}}

%Martingale Measures
\newcommand{\M}{\mathcal{M}}

%Scalar Product
\newcommand{\la}{\langle}
\newcommand{\ra}{\rangle}

%Special Type of Quadratic Variation
\newcommand{\lle}{\langle\hspace{-0.085cm}\langle}
\newcommand{\rre}{\rangle\hspace{-0.085cm}\rangle}
\newcommand{\blle}{\Big\langle\hspace{-0.155cm}\Big\langle}
\newcommand{\brre}{\Big\rangle\hspace{-0.155cm}\Big\rangle}

%Coordinate Process
\newcommand{\X}{\mathsf{X}}

%Short Cuts
\newcommand{\tr}{\operatorname{tr}}
\newcommand{\N}{{\mathbb{N}}}
\newcommand{\cadlag}{c\`adl\`ag }
\newcommand{\on}{\operatorname}
\newcommand{\oP}{\overline{P}}
\newcommand{\oO}{\mathcal{O}}
\newcommand{\D}{D(\mathbb{R}_+; \mathbb{R})}
\newcommand{\bx}{\mathsf{x}}
\newcommand{\bb}{\hat{b}}
\newcommand{\bs}{\hat{\sigma}}
\newcommand{\bk}{\hat{k}}
\renewcommand{\o}{\diamond}
\newcommand{\h}{R}

%Typographical
\renewcommand{\epsilon}{\varepsilon}

%Semimartingale laws
\newcommand{\fPs}{\mathfrak{P}_{\textup{sem}}}
\newcommand{\fPas}{\mathfrak{P}^{\textup{ac}}_{\textup{sem}}}
\newcommand{\rrarrow}{\twoheadrightarrow}
\newcommand{\cA}{\mathcal{C}}
\newcommand{\cR}{\mathcal{R}}
\newcommand{\cK}{\mathcal{K}}
\newcommand{\cQ}{\mathcal{Q}}
\newcommand{\cF}{\mathcal{F}}
\newcommand{\cE}{\mathcal{E}}
\newcommand{\cC}{\mathcal{C}}
\newcommand{\cD}{\mathcal{D}}
\newcommand{\cP}{\mathcal{P}}
\newcommand{\bC}{\mathbb{C}}
\newcommand{\cH}{\mathcal{H}}
\newcommand{\bth}{\overset{\leftarrow}\theta}
\renewcommand{\th}{\theta}
\newcommand{\cG}{\mathcal{G}}
%\DeclareMathOperator*{\gr}{gr}

%Basic Stuff
\newcommand{\bR}{\mathbb{R}}
\newcommand{\bN}{\mathbb{N}}
\newcommand{\nnabla}{\nabla}
\newcommand{\f}{\mathfrak{f}}
\newcommand{\g}{\mathfrak{g}}
\newcommand{\oconv}{\overline{\on{co}}\hspace{0.075cm}}
\renewcommand{\a}{\mathfrak{a}}
\renewcommand{\b}{\mathfrak{b}}
\renewcommand{\d}{\mathsf{d}}
\newcommand{\bS}{\mathbb{S}^d_+}
\newcommand{\p}{\dot{\partial}}
\newcommand{\dr}{r} %{\mathsf{r}}
\newcommand{\m}{\mathbb{M}}
\newcommand{\Q}{Q}
\newcommand{\n}{\overline{\nu}} %{\mathfrak{n}}
\newcommand{\usc}{\textit{USC}}
\newcommand{\lsc}{\textit{LSC}}
\newcommand{\q}{\mathfrak{q}}
\newcommand{\K}{\mathsf{M}}

\renewcommand{\emptyset}{\varnothing}

\allowdisplaybreaks

\makeatletter
\@namedef{subjclassname@2020}{%
	\textup{2020} Mathematics Subject Classification}
\makeatother

 \title[Nonlinear Semimartingales and Markov Processes with Jumps]{Nonlinear Semimartingales and Markov Processes \\ With Jumps} %\\
\author[D. Criens]{David Criens}
\author[L. Niemann]{Lars Niemann}
\address{Albert-Ludwigs University of Freiburg, Ernst-Zermelo-Str. 1, 79104 Freiburg, Germany}
\email{david.criens@stochastik.uni-freiburg.de}
\email{lars.niemann@stochastik.uni-freiburg.de}

\keywords{
nonlinear semimartingales; nonlinear Markov processes; sublinear expectation; sublinear semigroup; nonlinear expectation; partial differential equation; viscosity solution; semimartingale characteristics; Knightian uncertainty;
set-valued analysis; stochastic optimal control.}

\subjclass[2020]{47H20, 49J53, 49L25, 60G65, 60J60, 93E20}

\thanks{
LN acknowledges financial support from the DFG project SCHM 2160/13-1.}
\date{\today}

\maketitle

\begin{abstract}
In this paper we study a family of nonlinear (conditional) expectations that can be understood as a semimartingale with uncertain local characteristics. Here, the differential characteristics are prescribed by a time and path-dependent set-valued function. We show that the associated
control problem coincides with both its weak and relaxed counterparts.
Furthermore, we establish regularity properties of the value function and discuss their relation to Feller properties of nonlinear semigroups. In the Markovian case we provide conditions that allow us to identify the corresponding semigroup as the unique viscosity solution to a nonlinear Hamilton--Jacobi--Bellman equation. To illustrate our results we discuss a random \(G\)-double exponential L\'evy setting.
\end{abstract}

%\tableofcontents

\section{Introduction}
A \emph{nonlinear Markov process} is a family of sublinear expectations \( \{\cE^x \colon x \in \bR^d\} \) on the Skorokhod space \( D(\bR_+; \bR^d) \), with
\( \cE^x \circ X_0^{-1} = \delta_x \) for each \( x \in \bR^d \), such that the Markov property
\begin{equation} \label{eq: markov property}
\cE^x(\cE^{X_t}(\psi (X_s))) = \cE^x(\psi (X_{t+s}) ), \quad x \in \bR^d, \ s,t \in \bR_+, 
\end{equation}
holds. Here, \(\psi\) runs through a collection of suitable test functions and \( X \) denotes the canonical process on \( D(\bR_+; \bR^d) \).
The study of nonlinear stochastic processes started with the seminal work of Peng \cite{peng2007g, peng2008multi} on the \(G\)-Brownian motion. More recently, larger classes of nonlinear (Markov) processes were constructed and analyzed in \cite{biagini, C22a, CN22a, CN22b, denk2020semigroup, fadina2019affine, hol16, hu2021g, K19, K21, NR, neufeld2017nonlinear, nutz}.
As in the theory of (linear) Markov processes, there is a strong link to semigroups. Indeed, the Markov property \eqref{eq: markov property} ensures the semigroup property \( S_t S_s = S_{s+t}, s,t \in \bR_+ \), where the sublinear operators \( S_t, t \in \bR_+, \) are defined by 
\begin{equation} \label{eq: def semigroup}
    S_t(\psi)(x) := \cE^x(\psi(X_t)). 
\end{equation}
Using the general theory of \cite{ElKa15, NVH}, the operators
\(S_t, t \in \mathbb{R}_+\), are well-defined on the cone of upper semianalytic functions. 
Further, a fundamental connection between the semigroup \((S_t)_{t \in \bR_+}\) and its (pointwise) generator
\(A\) was established in \cite{hol16}. More precisely, once the map \((t,x) \mapsto S_t(\psi)(x)\) is continuous, it solves the evolution equation
\begin{equation*} \label{eq: PDE A intro}
\begin{cases}   
\partial_t u (t, x) - A(u(t, \cdot \,))(x) = 0, & \text{for } (t, x) \in \bR_+ \times \mathbb{R}^d, \\
u (0, x) = \psi (x), & \text{for } x \in \bR^d,
\end{cases}
\end{equation*}
in a viscosity sense.
This necessitates the study of \emph{Feller properties} of the semigroup, i.e., whether it preserves regularity of the initial function \(\psi\).

For nonlinear L\'evy processes as introduced in \cite{neufeld2017nonlinear}, the corresponding semigroup has the Feller property on several spaces of continuous functions, which follow from the underlying additive structure \(\mathcal{E}^x (\psi (X_t)) = \mathcal{E}^0 (\psi (x + X_t))\), cf. \cite{denk2020semigroup,hol16, K19, K21}. 

For a more general class of nonlinear Markov semimartingales with jumps, the so-called \(\usc_b\)--Feller property was investigated in \cite{hol16}, i.e., it was shown that \(S_t\) is a self-map on the cone of bounded upper semicontinuous functions.\footnote{There appears to be a small gap in the proof of the \(\usc_b\)--Feller property, as the function \(\omega \mapsto \omega (t)\) is not upper semicontinuous in the Skorokhod \(J_1\) topology. This gap can be closed thanks to the fact that \(\omega \mapsto \omega (t)\) is almost surely Skorokhod \(J_1\) continuous under any law of a quasi-left continuous process.} The question whether the semigroup preserves lower semicontinuity was left as an open problem. 

In our recent papers \cite{CN22a,CN22b}, we addressed this problem in a general path-continuous Markovian and non-Markovian semimartingale framework. Several arguments developed in \cite{CN22a,CN22b} for the proof of the (strong) \(\usc_b\)--Feller property heavily rely on the path-continuous setting and appear not suitable in the presence of jumps.

In this paper, we continue this line of research and consider a general semimartingale framework that allows for jumps.
In particular, we do not restrict ourselves to Markovian situations but tackle the problem in a fully path-dependent framework. The role of the semigroup is played by the so-called {\em value function} that is given by 
\begin{align} \label{eq: value function intr}
v (t, \omega) := \mathcal{E}_t (\psi) (\omega) := \sup_{P \in \cA (t, \omega)} E^P \big[ \psi \big], \quad (t, \omega) \in \mathbb{R}_+ \times D(\mathbb{R}_+; \mathbb{R}^d), 
\end{align}
where \(\cC (t, \omega)\) is a set of probability measures on the Skorokhod space \(D (\mathbb{R}_+; \mathbb{R}^d)\) which give point mass to the path \(\omega\) till time \(t\) and afterwards coincide with the law of a semimartingale with absolutely continuous characteristics. 
We parameterize the differential characteristics by a compact parameter space \(F\) and three functions \(b \colon F \times \bR_+ \times D(\bR_+;\bR^d) \to \bR^d\), \(\sigma \colon F \times \bR_+ \times D(\bR_+;\bR^d) \to \bS\) and
\(k \colon F \times \bR_+ \times  D(\bR_+;\bR^d) \times L \to \bR^d \)
such that
\begin{align*}
\cC(t,\omega) := \Big\{ P \in \mathfrak{P}_{\text{sem}}^{\text{ac}}(t)\colon P&(X = \omega \text{ on } [0, t]) = 1, 
\\&(\llambda \otimes P)\text{-a.e. } (dB^P_{\cdot + t} /d\llambda, dC^P_{\cdot + t}/d\llambda, \nu^P_{\cdot + t} / d\llambda) \in \Theta (\cdot + t, X) \Big\},
\end{align*}
where \(L\) is a Lusin space equipped with a reference measure \(\n\), which can be seen as the state space of the randomness driving the jumps,
\[
\Theta (t, \omega) := \big\{(b (f, t, \omega), \sigma \sigma^* (f, t, \omega), \n \o k (f, t, \omega, \cdot)^{-1}) \colon f \in F \big\},
\]
and
\(\fPas(t)\) denotes the set of semimartingale laws after \(t\) with absolutely continuous characteristics.
In the Markovian case, where the set \(\Theta(t,\omega)\) depends on \((t, \omega)\) only through the value \(\omega(t-)\), the semigroup emerges
by setting
\begin{equation*} 
    S_t(\psi)(x) := \sup_{P \in \cC(0,x)} E^P \big[ \psi(X_t) \big].
\end{equation*}
The main results from this paper will establish general conditions on upper and lower semicontinuity of the value function \(v \colon [0, T] \times D([0, T]; \bR^d) \to \bR\), for a finite time horizon \(T > 0\), where \([0, T] \times D([0, T]; \bR^d)\) is endowed with the pseudometric \(\d\) that is given by
\begin{align*}\label{eq: intro qseudo metric}
\d ((t, \omega), (s, \alpha)) := |t - s| + \sup_{r \in [0, T]} \| \omega (r \wedge t) - \alpha (r \wedge s)\|
\end{align*}
for \((t, \omega), (s, \alpha) \in [0, T] \times D([0, T]; \bR^d)\).
To illustrate our findings, we discuss a fully path-dependent random \(G\)-double exponential setting. This class is related to a robust version of Kou's model (\cite{Kou}) and provides an extension of the random \(G\)-Brownian motion from~\cite{NVH} to a framework with jumps.

By virtue of the last term in \eqref{eq: value function intr}, it is a natural approach to deduce regularity properties of \(v\) with techniques from set-valued analysis. 
For controlled (jump) diffusions, this idea was successfully applied in the seminal paper \cite{nicole1987compactification} by El Karoui, Nguyen and Jeanblanc. 
The control framework considered in \cite{nicole1987compactification} is particularly well-suited for weak convergence techniques. 
In order to benefit from this, we embed our setting in a control environment. More specifically, we relate the value function to its weak and relaxed counterparts
  \[
  v^R (t, \omega) := \sup_{P \in \mathcal{P}^R (t, \omega)} E^P \big[ \psi (X) \big], \qquad v^W (t, \omega) := \sup_{P \in \mathcal{P}^W (t, \omega)} E^P \big[ \psi (X) \big],
  \]
where 
\(\mathcal{P}^R\) and \(\mathcal{P}^W \) denote
the set of relaxed and weak control rules, respectively.
Under modest assumptions on the coefficients, we establish the equalities
\[
v = v^R = v^W.
\]
In particular, we highlight that they connect the approaches from the seminal papers \cite{ElKa15} and \cite{neufeld2017nonlinear}.
Borrowing some arguments from \cite{nicole1987compactification}, we prove them with the help of Filippov's implicit function theorem and stability results for convex mixing. 

Our proof for the continuity of \(v\) is based on ideas from set-valued analysis that were used in \cite{nicole1987compactification} for the relaxed framework.
As a consequence of Berge's maximum theorem, upper and lower semicontinuity of the value function are closely linked to upper and lower hemicontinuity of \(\cP^R\) and \(\cP^W\), respectively. 
Upper hemicontinuity is characterized by a closed graph, which is easier to guarantee in the larger set \(\cP^R \supset \cP^W\).
In contrast, lower hemicontinuity requires the construction of an approximating sequence for every measure. Here, it is beneficial to work with the smaller set of weak controls, which are subject to more structural constraints.
For upper hemicontinuity we use martingale problem and tightness arguments for laws of semimartingales. The proof for lower hemicontinuity relies on strong existence and stability properties of certain stochastic differential equations.

As explained above, in the Markovian case, regularity of the value function \(v\) allows us to identify the semigroup~\((S_t)_{t \in \bR_+}\) as a bounded viscosity solution to 
the nonlinear Kolmogorov-type partial differential equation
\begin{equation} \label{eq: intro PDE G}
\begin{cases}   
\partial_t u (t, x) - G (x, u(t, \cdot\,)) = 0, & \text{for } (t, x) \in \bR_+ \times \mathbb{R}^d, \\
u (0, x) = \psi (x), & \text{for } x \in \bR^d,
\end{cases}
\end{equation}
where the  nonlinearity \(G(x, \phi)\) is given by
\begin{align*}
    G(x, \phi) := \sup \Big\{ \langle \nabla & \phi (x), b (f, x) \rangle
    + \tfrac{1}{2} \on{tr} \big[ \nabla^2\phi (x) \sigma \sigma^* (f, x) \big]
    \\& + \int \big[ \phi(x + k (f, x, z)) - \phi(x) - \langle \nabla \phi(x) , h (k (f, x, z)) \rangle\big] \n (dz)
    \colon f \in F \Big\}
\end{align*}
for  \((x,\phi) \in \mathbb{R}^d \times C^{2}_b(\mathbb{R}^d; \bR)\).
In particular, for suitable input functions, the nonlinearity \(G\) provides a concise description of the (pointwise) generator of \((S_t)_{t \in \bR_+}\). 
Further, under global Lipschitz conditions, a comparison principle from \cite{hol16} identifies \(v\) as the unique bounded viscosity solution to the PDE \eqref{eq: intro PDE G}. This allows us to characterize \((S_t)_{t \in \bR_+}\) as the unique jointly continuous sublinear Markovian semigroup on \(C_b(\bR^d; \bR)\) whose (pointwise) generator coincides with \(G\) on the space of compactly supported smooth functions. 

\smallskip
This paper is structured as follows: in Section \ref{sec: setting} we introduce our setting and in Section~\ref{sec: regularity} we present our
regularity results for the value function.
In Section~\ref{sec: markovian} we discuss the Markovian case.
Section~\ref{sec: ex} is devoted to the study of random \(G\)-double exponential processes.
In Section~\ref{sec: control} we relate the value function to its counterparts from stochastic optimal control.
The regularity results for \(v\) are proved in Section~\ref{sec: pf regularity}, while the proofs for the Markovian case are given in Section \ref{sec: pf markovian}.

\newpage
\section{Nonlinear Semimartingales with Jumps}
In this section we present our main results on the regularity of the value function.

\subsection{The Framework}\label{sec: setting}
Let \(d \in \mathbb{N}\) be a fixed dimension and define $\Omega$ to be the space of all \cadlag functions \(\mathbb{R}_+ \to \mathbb{R}^d\) endowed with the Skorokhod \(J_1\) topology. The Euclidean scalar product and the corresponding Euclidean norm are denoted by \(\langle \cdot, \cdot\rangle\) and \(\|\cdot\|\).
We write \(X\) for the canonical process on $\Omega$, i.e., \(X_t (\omega) = \omega (t)\) for \(\omega \in \Omega\) and \(t \in \mathbb{R}_+\). 
It is well-known that \(\mathcal{F} := \mathcal{B}(\Omega) = \sigma (X_t, t \geq 0)\).
We define $\F := (\mathcal{F}_t)_{t \geq 0}$ as the canonical filtration generated by $X$, i.e., \(\mathcal{F}_t := \sigma (X_s, s \leq t)\) for \(t \in \mathbb{R}_+\). Further, let \(\mathscr{P}\) be the \(\F\)-predictable \(\sigma\)-field on \(\bR_+ \times \Omega\).
To lighten our notation, for two stopping times \(S\) and \(T\), we define the stochastic interval
\[
\of S, T \of \hspace{0.1cm} := \{ (t,\omega) \in \bR_+ \times \Omega \colon S(\omega) \leq t < T(\omega) \}.
\]
The stochastic intervals \( \gs S, T \of, \of S, T \gs, \gs S, T \gs  \) are defined accordingly.
In particular, the identity \(\of 0, \infty\of \hspace{0.1cm} = \bR_+ \times \Omega\) holds.

The set of probability measures on \((\Omega, \mathcal{F})\) is denoted by \(\mathfrak{P}(\Omega)\) and endowed with the usual topology of convergence in distribution.
Let \(\mathbb{S}^d_+\) be the set of all symmetric nonnegative semidefinite real-valued \(d \times d\) matrices, and set 
\[
\mathcal{L} := \Big\{ K \text{ measure on } (\bR^d, \mathcal{B}(\bR^d)) \colon K (\{0\}) = 0, \  \int \,(\|x\|^2 \wedge 1)\, K (dx) < \infty\Big\}.
\]
We endow \(\mathcal{L}\) with the weakest topology under that the maps
\[
K \mapsto \int f (x) (\|x\|^2 \wedge 1)\, K (dx), \quad f \in C_b (\bR^d; \bR), 
\]
are continuous. With this topology, the space \(\mathcal{L}\) is Polish, see \cite[Lemma~A.1]{C22a}.

Next, we introduce coefficients that capture differential characteristics under ambiguity. In \cite{hol16}, the author introduced uncertainty to the semimartingale characteristics via a state and control dependent L\'evy--Khinchine triplet \((b, a, K)\). In this paper, we use a push-forward density with respect to a given reference measure to introduce uncertainty to the third characteristic. A similar idea was used in the final section of \cite{nicole1987compactification}. 
As we explain now, this point of view is equivalent to those from \cite{hol16} in many aspects.
It is classical (see, e.g., \cite[Lemma~3.4, Remark~3.6]{CJ81}) that for any Borel kernel \(K\) from a Polish space \(E\) into \(\mathcal{L}\), there exists a measurable map \(k \colon E \times \bR \to \bR^d\) such that 
\begin{align}\label{eq: K rep mit usual ref me}
K (x, G) = \int \1_{G \backslash \{0\}} (k (x, y)) dy/y^2, \quad G \in \mathcal{B} (\bR^d).
\end{align}
From a modeling point of view, it is equivalent to model the kernel \(K\) or its push-forward density \(k\). The choice of the reference measure \(dy/ y^2\) is flexible and we will work with a general measure \(\n\) to allow for an additional degree of freedom.

\begin{remark} \label{rem: k}
We consider the case \(d = 1\). Let \(\nu\) be an infinite, non-atomic Borel measure on \(\bR\) such that \( (- \infty, 0) \ni t \mapsto \nu ((- \infty, t])\) and \((0, \infty) \ni t \mapsto \nu ([t, \infty))\) are strictly increasing and finite. Define 
    \begin{align*}
    k (x, y) := \begin{cases} 
\sup \big\{ z > 0 \colon K (x, [z, \infty)) \geq \nu ([y, \infty)) \big\}, & y > 0, 
\\
0, & y = 0, 
\\
\sup \big\{ z < 0 \colon K (x, (- \infty, z]) \geq \nu ((- \infty, y]) \big\}, & y < 0,
    \end{cases}
    \end{align*}
with the convention \(\sup \emptyset = 0\).    
Then, provided \(K (x, \{0\}) = 0\), \(K (x, [t, \infty)) < \infty\) and \(K (x, (- \infty, - t]) < \infty\) for all \(t > 0\), we have 
\[
K (x, G) = \int \1_{G \backslash \{0\}} (k (x, y)) \nu (dy), \quad G \in \mathcal{B} (\bR), 
\]
cf. \cite[Remark~3.6, p. 194]{CJ81} or \cite[Exercise~9.2.6]{stroockanalytic}.

In certain cases, it is not necessary that \(\nu\) is infinite and it suffices that \(\nu\) has more mass than every \(K (\,\cdot\,, dy)\). Such a case is discussed in Section~\ref{sec: ex} below.
\end{remark}

Take a topological space \(F\), a dimension \(r \in \mathbb{N}\) and a Lusin space \(L\).
Let \(\llambda \otimes \n\) be the intensity measure of a Poisson random measure on \(\bR_+ \times L\), where \(\llambda\) denotes the Lebesgue measure.
Further, fix two \(\mathcal{B}(F) \otimes \mathscr{P}\)-measurable functions \(b \colon F \times \of 0, \infty\of \hspace{0.05cm} \to \bR^d\) and \(\sigma \colon F \times \of 0, \infty\of \hspace{0.05cm} \to \bR^{d \times r}\) and a \(\mathcal{B}(F) \otimes \mathscr{P} \otimes \mathcal{B}(L)\)-measurable function \(k \colon F \times \of 0, \infty\of \hspace{0.05cm} \times\hspace{0.05cm} L \to \bR^d\) such that, for all \((f, t, \omega) \in F\times\, \of 0, \infty\of\), 
\begin{align} \label{eq: assumption k inte}
\int \,(\|k (f, t, \omega, z)\|^2 \wedge 1)\, \n (dz) < \infty.
\end{align}
For a Borel function \(g \colon L \to \mathbb{R}^d\), we define \(\n \o g^{-1}\) to be the Borel measure
\[
\n \o g^{-1} (G) := \int \1_{G \backslash \{0\}} (g (z))\, \n (dz), \quad G \in \mathcal{B}(\bR^d). 
\]
Further, we define a correspondence, i.e., a set-valued mapping, \(\Theta \colon \of 0, \infty\of \hspace{0.05cm} \twoheadrightarrow \mathbb{R}^d \times \bS \times \mathcal{L}\) by
\[
\Theta (t, \omega) := \big\{ \big(b (f, t, \omega), \sigma \sigma^* (f, t, \omega), \n \o k (f, t, \omega, \cdot)^{-1} \big) \colon f \in F \big\}.
\]
In this paper, we work under the following
\begin{SA} \label{SA: meas gr}
\(\Theta\) has a measurable graph, i.e., the graph
\[
\on{gr} \Theta = \big\{ (t, \omega, b, a, K) \in \of 0, \infty\of \hspace{0.05cm} \times \hspace{0.05cm} \mathbb{R}^d \times \bS \times \mathcal{L} \colon (b, a, K) \in \Theta (t, \omega) \big\}
\]
is Borel. 
\end{SA}
\begin{remark}
By \cite[Lemma 2.9]{CN22a}, Standing Assumption \ref{SA: meas gr} holds under Condition \ref{cond: main1} (i) and (iii) below. 
\end{remark}

We call an \(\bR^d\)-valued \cadlag process \(Y = (Y_t)_{t \geq 0}\) a \emph{semimartingale after a time \(t^* \in \mathbb{R}_+\)} if the process \(Y_{\cdot + t^*} = (Y_{t + t^*})_{t \geq 0}\) is a \(d\)-dimensional semimartingale for its natural right-continuous filtration.
The law of a semimartingale after \(t^*\) is said to be a \emph{semimartingale law after \(t^*\)} and the set of them is denoted by \(\fPs (t^*)\).
For \(P \in \fPs (t^*)\) we denote the semimartingale characteristics (with respect to a fixed {\em Lipschitz continuous} truncation function \(h \colon \bR^d \to \bR^d\)) of the shifted coordinate process \(X_{\cdot + t^*}\) by \((B^P_{\cdot + t^*}, C^P_{\cdot + t^*}, \nu^P_{\cdot + t^*})\). 
Moreover, we set 
\[
\fPas (t^*) := \big\{ P \in \fPs (t^*) \colon P\text{-a.s. } (B^P_{\cdot + t^*}, C^P_{\cdot + t^*}, \nu^P_{\cdot +t^*}) \ll \llambda \big\}, \quad \fPas := \fPas (0).
\]
Finally, for $(t,\omega) \in \of 0, \infty\of$, we define $\cA(t,\omega) \subset \mathfrak{P}(\Omega)$ by
\begin{align*}
\cC(t,\omega) := \Big\{ P \in \mathfrak{P}_{\text{sem}}^{\text{ac}}(t)\colon &P(X = \omega \text{ on } [0, t]) = 1, \\&\quad
(\llambda \otimes P)\text{-a.e. } (dB^P_{\cdot + t} /d\llambda, dC^P_{\cdot + t}/d\llambda, \nu^P_{\cdot + t}/ d \llambda) \in \Theta (\cdot + t, X) \Big\}.
\end{align*}

From now on, we also impose the following
\begin{SA} \label{SA: non empty}
\(\cA (t, \omega) \not = \emptyset\) for all \((t, \omega) \in \of 0, \infty\of\).
\end{SA}

\begin{remark} \label{rem: non empty}
    Standing Assumption \ref{SA: non empty} can be deduced from existence theorems for semimartingale laws as given in \cite{criens19,jacod79,JS}. For instance, \cite[Theorem IX.2.31]{JS} shows that Standing Assumption \ref{SA: non empty} holds under the boundedness conditions from Condition \ref{cond: main1} (iv) below and mild continuity conditions on \(b, \sigma\) and \(k\) as detailed in \cite[IX.2.27, IX.2.28]{JS}. For the Markovian case, where \(b, \sigma\) and \(k\) depend on \((t, \omega)\) only through \(\omega (t-)\), we also refer to \cite[Corollary~IX.2.33]{JS}.
\end{remark}

Suppose that \(\psi \colon \Omega \to [- \infty, \infty]\) is an upper semianalytic function, i.e., \(\{\omega \in \Omega \colon \psi (\omega) > c\}\) is analytic for every \(c \in \mathbb{R}\),
and define the so-called \emph{value function} by
\begin{align} \label{eq: def vf}
v (t, \omega) := \sup_{P \in \cA (t, \omega)} E^P \big[ \psi \big], \quad (t, \omega) \in \of 0, \infty\of.
\end{align}
Under the Standing Assumptions \ref{SA: meas gr} and \ref{SA: non empty}, the value function satisfies the dynamic programming principle as given by the following theorem. Its proof is similar to the one of \cite[Theorem~3.1]{CN22a} for the path-continuous case. Alternatively, the result can be deduced from a suitable version of \cite[Theorem~4.29]{hol16}. We omit the details.

\begin{theorem}[Dynamic Programming Principle] \label{theo: DPP}
The value function \(v\) is upper semianalytic. Moreover, for every pair \((t, \omega) \in \of 0, \infty\of \) and every stopping time \(\tau\) with \(t \leq \tau < \infty\), we have 
\begin{align} \label{eq: DPP}
v(t, \omega) = \sup_{P \in \cA (t, \omega)} E^P \big[ v (\tau, X) \big]. 
\end{align}
\end{theorem}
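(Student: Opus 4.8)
The plan is to follow the classical measurable-selection approach to the dynamic programming principle for controlled stochastic processes, as developed in works such as El Karoui--Tan, Neufeld--Nutz, and \cite{hol16}, and adapted to the path-continuous semimartingale setting in \cite[Theorem~3.1]{CN22a}. The key structural ingredients one needs to verify for the family \(\{\cA(t,\omega)\}_{(t,\omega)\in\of 0,\infty\of}\) are the following four conditions: (a) \emph{measurability} — the graph \(\{(t,\omega,P)\colon P\in\cA(t,\omega)\}\) is analytic in \(\of 0,\infty\of\times\mathfrak P(\Omega)\), so that, by the Jankov--von Neumann selection theorem together with the fact that \(\psi\) is upper semianalytic, the value function \(v\) is upper semianalytic and near-optimal measurable selectors exist; (b) \emph{stability under conditioning} — if \(P\in\cA(t,\omega)\) and \(\tau\) is a stopping time with \(t\le\tau<\infty\), then for \(P\)-a.e.\ \(\omega'\) a regular conditional probability \(P^{\tau,\omega'}\) of \(P\) given \(\mathcal F_\tau\) lies in \(\cA(\tau(\omega'),\omega')\); and (c) \emph{stability under pasting/concatenation} — if \(P\in\cA(t,\omega)\) and \((\omega'\mapsto Q_{\omega'})\) is an \(\mathcal F_\tau\)-measurable kernel with \(Q_{\omega'}\in\cA(\tau(\omega'),\omega')\) for \(P\)-a.e.\ \(\omega'\), then the concatenation \(P\otimes_\tau Q\) again belongs to \(\cA(t,\omega)\). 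Once (a)--(c) are in place, the two inequalities in \eqref{eq: DPP} follow in the standard way: the ``\(\le\)'' direction from conditioning (disintegrate any \(P\in\cA(t,\omega)\) at \(\tau\), apply the tower property, and bound each conditional expectation by \(v(\tau,\cdot)\)), and the ``\(\ge\)'' direction from pasting (take an \(\varepsilon\)-optimizer \(P\) for the right-hand side, select a measurable family of \(\varepsilon\)-optimizers \(Q_{\omega'}\) for \(v(\tau(\omega'),\omega')\) using upper semianalyticity, concatenate, and let \(\varepsilon\downarrow 0\)).

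Concretely, I would first record that \(v\) is upper semianalytic: the set \(\mathrm{gr}\,\cA:=\{(t,\omega,P)\colon P\in\cA(t,\omega)\}\) is Borel (or at least analytic), which is where Standing Assumption~\ref{SA: meas gr} enters — the conditions \(P(X=\omega\text{ on }[0,t])=1\) and ``\((\llambda\otimes P)\)-a.e.\ the differential characteristics lie in \(\Theta(\cdot+t,X)\)'' are jointly measurable in \((t,\omega,P)\) because the characteristics \((B^P,C^P,\nu^P)\) depend measurably on \(P\) and \(\mathrm{gr}\,\Theta\) is Borel. Then \((t,\omega)\mapsto v(t,\omega)=\sup_{P\in\cA(t,\omega)}E^P[\psi]\) is a supremum over the sections of an analytic set of the upper semianalytic function \((t,\omega,P)\mapsto E^P[\psi]\) (the latter being upper semianalytic in \(P\) by \cite[Prop.~7.48]{BertsekasShreve}-type arguments since \(\psi\) is upper semianalytic), hence upper semianalytic by the projection theorem. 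This also furnishes, for each \(\varepsilon>0\), a universally measurable \(\varepsilon\)-optimal selector \((t,\omega)\mapsto P_{t,\omega}\in\cA(t,\omega)\) with \(E^{P_{t,\omega}}[\psi]\ge v(t,\omega)-\varepsilon\) (or \(\ge 1/\varepsilon\) on \(\{v=\infty\}\)).

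The technical heart — and the step I expect to be the main obstacle — is establishing the conditioning/pasting stability (b)--(c) \emph{in the presence of jumps}. The membership condition in \(\cA(t,\omega)\) is phrased through the absolutely continuous semimartingale characteristics, including the \emph{third characteristic} \(\nu^P\), and one must check that this property is preserved under regular conditional probabilities and under concatenation at a stopping time. For the continuous case this is \cite[Theorem~3.1, Prop.~2.x]{CN22a}; the analogue for general semimartingales rests on the behaviour of semimartingale characteristics under these operations, which is classical (see \cite[Ch.~III, X]{JS} and \cite{criens19}): characteristics localize well, conditional processes of a semimartingale after \(t\) are again semimartingales after \(\tau\) with the ``same'' differential characteristics restricted to \(\of\tau,\infty\of\), and the pasted measure has characteristics that agree with those of \(P\) on \(\of t,\tau\of\) and with those of \(Q_{\omega'}\) on \(\of\tau,\infty\of\). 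The delicate points are (i) the null-set bookkeeping: ``\((\llambda\otimes P)\)-a.e.'' statements must be transported through the disintegration via Fubini, and (ii) ensuring that the pasted kernel \(Q\) can be chosen jointly measurably as a function of \((\omega',\text{control})\), for which one invokes the measurability of \(\mathrm{gr}\,\cA\) again together with a selection theorem. Since the excerpt explicitly says ``Its proof is similar to the one of \cite[Theorem~3.1]{CN22a}'' and offers the alternative route via ``a suitable version of \cite[Theorem~4.29]{hol16},'' my proof proposal is precisely to port the \cite{CN22a} argument, replacing the continuous-path characteristic manipulations by their general-semimartingale counterparts from \cite{JS, criens19}, and to invoke \cite{hol16} for the jump-specific measurable-selection bookkeeping; I would not reprove these stability lemmas from scratch but cite them, exactly as the authors announce they will ``omit the details.''
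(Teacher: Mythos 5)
Your proposal follows exactly the route the paper takes: the authors omit the proof and state that it is obtained by porting the measurable-selection argument of \cite[Theorem~3.1]{CN22a} (analyticity of the graph of \(\cA\), stability under conditioning and under pasting at a stopping time, then the two inequalities via disintegration and concatenation of \(\varepsilon\)-optimal selectors), or alternatively via a version of \cite[Theorem~4.29]{hol16}. Your identification of the jump-specific issues (behaviour of the third characteristic under conditioning/pasting and the null-set bookkeeping for the \((\llambda\otimes P)\)-a.e.\ condition) correctly pinpoints where the adaptation from the continuous case is needed, so the plan is sound and matches the paper's intended argument.
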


The DPP provides important properties of the value function. In many cases, for instance to characterize its dynamics, it is important to establish in addition certain regularity properties of the value function. In the second part of this section, we provide explicit conditions on the coefficients \(b, a\) and \(v\) for upper and lower semicontinuity of the value function.

\subsection{Regularity of the Value Function} \label{sec: regularity}
In this section we present conditions for upper and lower semicontinuity of the function \([0, T] \times D([0, T]; \bR^d) \ni (t, \omega) \mapsto v (t, \omega)\), where \(D([0, T]; \bR^d)\) denotes the space of all \cadlag functions from \([0, T]\) into \(\bR^d\) and \(T > 0\) is an arbitrary finite time horizon. We define a pseudometric \(\d\) on the space \([0, T] \times D([0, T]; \bR^d)\) by the formula
\begin{align}\label{eq: qseudo metric}
\d ((t, \omega), (s, \alpha)) := |t - s| + \sup_{r \in [0, T]} \| \omega (r \wedge t) - \alpha (r \wedge s)\|,
\end{align}
where \((t, \omega), (s, \alpha) \in [0, T] \times D([0, T]; \bR^d)\). Evidently, \(\d\) is not a metric, as \(\d ((t, \omega), (s, \alpha)) = 0\) does not imply \(\omega = \alpha\). Nevertheless, when passing to equivalence classes, i.e., identifying all elements \((t, \omega)\) and \((s, \alpha)\) such that \(\d ((t, \omega), (s, \alpha)) = 0\), the space \(([0, T] \times D([0, T]; \bR^d), \mathsf{d})\) becomes a metric space. From now on, we use this metric structure for \([0, T] \times D([0, T]; \bR^d)\).

\begin{condition} \label{cond: main1}
    \quad
    \begin{enumerate}
        \item[\textup{(i)}] \(F\) is a compact metrizable space.
        \item[\textup{(ii)}] \(\Theta\) is convex-valued.
        \item[\textup{(iii)}] For every \((t, \omega) \in \of 0, \infty\of\), the map \(f \mapsto (b (f, t, \omega), \sigma \sigma^* (f, t, \omega), \n \o k (f, t, \omega, \cdot)^{-1})\) is continuous from \(F\) into \(\bR^d \times \bS \times \mathcal{L}\).
        \item[\textup{(iv)}] For every \(T > 0\), there exists a constant \(C = C_T > 0\) such that 
        \[
        \|b (f, t, \omega)\| + \|\sigma (f, t, \omega)\| + \int \,(\|k (f, t, \omega, z)\|^2 \wedge 1)\, \n (dz) \leq C
        \]
        for all \((f, t, \omega) \in F \times \of 0, T\gs\). Moreover, for all \(T > 0\),
        \begin{align} \label{eq: tightness in cond}
        \sup \big\{ \n (\|k (f, t, \omega, \cdot ) \| > R ) \colon (f, t, \omega) \in F \times \of 0, T\gs \big\} \to 0 \text{ as } R \to \infty.
        \end{align}
    \end{enumerate}
\end{condition}

  Let \(C^2_b (\bR^d; \bR)\) be the set of all bounded twice continuously differentiable functions from \(\bR^d\) into \(\bR\) with bounded gradient and Hessian matrix. For \(g \in C^2_b (\bR^d; \bR)\), we set 
  \begin{equation} \label{eq: def Lg}
  \begin{split}
\mathcal{L} g (f, t, \omega, x) &:= \langle \nabla  g (x), b (f, t, \omega) \rangle + \tfrac{1}{2} \on{tr} \big[ \nabla^2 g (x) \sigma \sigma^* (f, t, \omega)\big]
\\&\qquad \quad + \int \big[ g (x + k (f, t, \omega, z)) - g (x) - \langle \nabla g (x), h (k (f, t, \omega, z)) \rangle\big] \,\n (dz)
  \end{split}
  \end{equation}
  for \((f, t, \omega, x) \in F \times \of 0, \infty\of \, \times \hspace{0.05cm}\bR^d\). 
  Notice that the integral in \eqref{eq: def Lg} is well-defined due to our assumption that \eqref{eq: assumption k inte} holds.

\begin{condition} \label{cond: main2}
Condition \ref{cond: main1} holds and, for every \(t \in \bR_+\), \(g \in C^2_b (\bR^d; \bR)\), and every sequence \((\omega^n)_{n=0}^\infty \subset \Omega\) with
\(\omega^n \to \omega^0\) in the Skorokhod \(J_1\) topology, it holds that
\begin{align} \label{eq: conv cond 2.8}
\int_0^t  \sup_{f \in F} | \mathcal{L} g (f, s, \omega^n, \omega^n (s)) - \mathcal{L} g (f, s, \omega^0, \omega^0 (s)) | ds \to 0.
\end{align}
\end{condition}

To state our first main result, we require one more definition.
For a set \(\mathcal{Q}\) of probability measures, we say that a property holds {\em \(\mathcal{Q}\)-a.s.} if it holds \(Q\)-a.s. for all \(Q \in \mathcal{Q}\). Note that, compared to the more frequently used notion of {\em \(\mathcal{Q}\)-quasi surely}, the exception set is allowed to depend on \(Q \in \mathcal{Q}\).
We are in the position to formulate the first main result of this section.

\begin{theorem} \label{theo: value function upper semi}
Suppose that Condition \ref{cond: main2} holds. Then, for every \(T > 0\), the value function \(v \colon [0, T] \times D([0, T]; \bR^d) \to \bR\) is upper semicontinuous at \((t, \omega) \in [0, T] \times D ([0, T]; \bR^d)\) for every bounded \(\mathcal{C}(t, \omega)\)-a.s. upper semicontinuous input function \(\psi \colon \Omega \to \bR\).
\end{theorem}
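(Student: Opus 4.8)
The plan is to transfer the problem to the relaxed control picture, where upper semicontinuity becomes a closed-graph (upper hemicontinuity) statement amenable to weak-convergence arguments. First I would invoke the identification $v = v^R$ (established in Section~\ref{sec: control}) so that it suffices to prove upper semicontinuity of $v^R(t,\omega) = \sup_{P \in \cP^R(t,\omega)} E^P[\psi(X)]$. Fix $(t^0,\omega^0) \in [0,T] \times D([0,T];\bR^d)$ and a sequence $(t^n,\omega^n) \to (t^0,\omega^0)$ in the pseudometric $\d$; pick near-optimal relaxed rules $P^n \in \cP^R(t^n,\omega^n)$ with $E^{P^n}[\psi(X)] \geq v^R(t^n,\omega^n) - 1/n$. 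The first key step is \emph{tightness} of $(P^n)_{n}$: this uses the boundedness part of Condition~\ref{cond: main1}(iv) to control the first and second characteristics, the vanishing-tail condition~\eqref{eq: tightness in cond} to control the jump part, and Aldous-type / Jacod–Shiryaev criteria (e.g.\ \cite[Theorem~VI.4.5, IX.2.x]{JS}) for relative compactness of laws of semimartingales with absolutely continuous characteristics; the extra relaxed control variable lives in a fixed compact space (as $F$ is compact metrizable by Condition~\ref{cond: main1}(i)), so its marginals are automatically tight. Passing to a subsequence, $P^n \Rightarrow P^0$.

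The second key step is to show $P^0 \in \cP^R(t^0,\omega^0)$, i.e.\ that the limiting law is again a relaxed rule with the right initial path and the right differential characteristics. Membership in the "frozen path until $t^0$" constraint follows because, under $\d$-convergence, $\omega^n(\cdot \wedge t^n) \to \omega^0(\cdot \wedge t^0)$ uniformly and the evaluation maps are a.s.\ continuous under limits of laws of quasi-left-continuous processes (the footnote remark in the introduction handles the $J_1$-continuity subtlety). The characteristic constraint is obtained via a martingale-problem argument: for $g \in C^2_b(\bR^d;\bR)$ the process $g(X_{\cdot+t^n}) - g(\omega^n(t^n)) - \int_0^{\cdot} \int_F \cL g(f,s+t^n,X,X_s)\,q_s(df)\,ds$ is a $P^n$-martingale (after the relaxed control $q$), and convergence~\eqref{eq: conv cond 2.8} from Condition~\ref{cond: main2} is exactly what is needed to pass this martingale property to the weak limit, upgrading pointwise limits of the integrands to $L^1_{\text{loc}}$ convergence along the convergent paths. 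Because $\Theta$ is convex-valued (Condition~\ref{cond: main1}(ii)) with measurable graph and $f \mapsto (\cdots)$ is continuous (Condition~\ref{cond: main1}(iii)), the set of relaxed rules with characteristics selected from $\Theta$ is closed under this limiting operation, so $P^0 \in \cP^R(t^0,\omega^0)$. Finally, the boundedness and $\cP^R(t^0,\omega^0)$-a.s.\ upper semicontinuity of $\psi$ give, by the Portmanteau theorem for upper semicontinuous functions (together with the fact that the exceptional null sets vanish under the limit law), $\limsup_n E^{P^n}[\psi(X)] \leq E^{P^0}[\psi(X)] \leq v^R(t^0,\omega^0)$, whence $\limsup_n v^R(t^n,\omega^n) \leq v^R(t^0,\omega^0)$.

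The main obstacle I anticipate is the martingale-problem limit: one must be careful that the time-shift by $t^n$ (a \emph{variable} shift converging to $t^0$) interacts correctly with both the frozen-path constraint and the integrability in~\eqref{eq: conv cond 2.8}, and that the relaxed control variables $q^n$ pass to a limiting relaxed control without loss of the selection property $(\text{drift},\text{diffusion},\text{jump measure}) \in \Theta$. Handling this cleanly is where convexity of $\Theta$ and the precise form of the $\d$-convergence (uniform convergence of the stopped paths, so that $\omega^n(s) \to \omega^0(s)$ at continuity points and in the integral sense) are essential; it is also where the almost-sure rather than quasi-sure nature of the exceptional sets for $\psi$ must be tracked, since the limit measure $P^0$ need not dominate the $P^n$.
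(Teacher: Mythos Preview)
Your approach matches the paper's: transfer to $v^R$ via Theorem~\ref{theo: connection to control rules}, prove that $\cP^R$ has closed graph (Proposition~\ref{prop: PR closed}) and is sequentially compact along $\d$-convergent initial data (Proposition~\ref{prop: upper hemi}), then conclude by a Portmanteau-type bound for a.s.\ upper semicontinuous integrands.

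The one step you underestimate is tightness. You cite Jacod--Shiryaev semimartingale criteria, but under $P^n$ the coordinate process is a semimartingale only \emph{after} time $t^n$; on $[0,t^n]$ it equals the arbitrary deterministic path $\omega^n$, which need not be of finite variation, so \cite[Theorem~VI.5.10]{JS} does not apply to $X$ directly. The paper (Step~2 of Proposition~\ref{prop: upper hemi}) resolves this by first proving tightness for the shifted process $Y^n := X_{\cdot \vee t^n}$ and then writing $X = \alpha^n + Y^n$ with $\alpha^n := \omega^n(\cdot\wedge t^n) - \omega^n(t^n) \to \alpha^0$ uniformly. The catch is that addition is discontinuous on Skorokhod space, so one must first show that the weak limit of the laws of $Y^n$ has no fixed times of discontinuity and then invoke Whitt's a.s.-continuity result for addition \cite{whitt}. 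This, rather than the martingale-problem limit, is where the hidden work lies; the martingale limit goes through essentially as you sketch, and convexity of $\Theta$ plays no role there---it enters only in the identification $v = v^R$, via Lemma~\ref{lem: vR leq v}.
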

Theorem \ref{theo: value function upper semi} extends \cite[Theorems~4.3 and 4.4]{CN22a} on the upper semicontinuity of the value function from a path-continuous framework to a setting with jumps. Additionally, compared to the results of \cite{CN22a}, Theorem~\ref{theo: value function upper semi} allows for input functions \(\psi\) that are only \(\cC\)-a.s. upper semicontinuous. In the presence of jumps this is an important generalization, as functions of the type \(\omega \mapsto \psi (\omega (t))\) are often \(\cC\)-a.s. but not everywhere upper semicontinuous, cf. Remark~\ref{rem: as continuity} below.

\smallskip
Next, we also investigate lower semicontinuity of the value function. 

\begin{condition} \label{cond: main3}
For every \(N > 0\), there is a constant \(C = C_N > 0\) and a Borel function \(\gamma = \gamma_N \colon L \to [0, \infty]\) such that \(\int (\gamma (z) \vee \gamma^2 (z))\, \n (dz) < \infty\) and 
\begin{align*}
\|b (f, t, \omega) - b (f, t, \alpha)\| + \|\sigma (f, t, \omega) - \sigma (f, t, \alpha) \| &\leq C \sup_{s \in [0, t]} \|\omega (s) - \alpha (s)\|,  \\
\|k (f, t, \omega, z) - k (f, t, \alpha, z)\| &\leq \gamma (z)  \sup_{s \in [0, t]} \|\omega (s) - \alpha (s)\|
\end{align*}
for all \((f, t, z) \in F \times [0, N] \times L\) and \(\omega, \alpha \in \Omega \colon \sup_{s \in [0, t]} \|\omega (s)\| \vee \|\alpha (s)\| \leq N\). Furthermore, there exists a Borel function \(\beta \colon L \to [0, \infty]\) such that \(\int (\beta (z) \vee \beta^2 (z)) \, \n (dz) < \infty\) and 
\[
\| k (f, t, \omega, z)\| \leq \beta (z) 
\]
for all \((f, t, \omega, z) \in F \times \of 0, \infty\of \hspace{0.05cm}\times \hspace{0.05cm} L\).
\end{condition}

\begin{remark}
The Conditions \ref{cond: main1} and \ref{cond: main2} are independent of our choice to model the third differential characteristic via a reference measure \(\n\) and a density coefficients \(k\). Indeed, the conditions would be exactly the same if we start with a predictable transition kernel \(K\) from \(\of 0, \infty\of\) into \((\bR^d, \mathcal{B}(\bR^d))\). In contrast, the coefficient \(k\) is explicitly used in Condition~\ref{cond: main3}, which is the main motivation for our modeling choice.
\end{remark}

\begin{theorem} \label{theo: value function lower semi}
    Suppose that the Conditions \ref{cond: main1} and \ref{cond: main3} hold. Then, for every \(T > 0\), the value function \(v \colon [0, T] \times D([0, T]; \bR^d) \to \bR\) is lower semicontinuous at \((t, \omega) \in [0, T] \times D ([0, T]; \bR^d)\) for every bounded \(\mathcal{C}(t, \omega)\)-a.s. lower semicontinuous input function \(\psi \colon \Omega \to \bR\).
\end{theorem}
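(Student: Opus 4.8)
The plan is to prove the lower semicontinuity of \(v\) by way of the control-theoretic reformulation advertised in the introduction: \(v = v^W\), where \(v^W\) is the supremum over weak control rules, and then to establish \emph{lower hemicontinuity} of the weak-control correspondence \((t,\omega) \mapsto \cP^W(t,\omega)\). Once that hemicontinuity is in hand, Berge's maximum theorem delivers lower semicontinuity of \((t,\omega)\mapsto \sup_{P\in\cP^W(t,\omega)} E^P[\psi]\) for every bounded continuous \(\psi\), and the ``\(\cC(t,\omega)\)-a.s. lower semicontinuous'' generalization is then obtained by a monotone/approximation argument (approximating \(\psi\) from below by continuous functions and exploiting that the exception sets are \(\cC(t,\omega)\)-negligible, so that the limit is not disturbed along the relevant measures). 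I would therefore structure the proof as: (1) reduce to continuous \(\psi\); (2) reduce to lower hemicontinuity of \(\cP^W\); (3) prove the latter by constructing, for a fixed weak control rule \(P^0\in\cP^W(t^0,\omega^0)\) and a sequence \((t^n,\omega^n)\to(t^0,\omega^0)\) in the metric \(\d\), a sequence \(P^n\in\cP^W(t^n,\omega^n)\) with \(P^n\to P^0\) weakly.

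For step (3), the natural device — and this is where Condition~\ref{cond: main3} is used — is to realize the weak control rule as the law of the solution to a stochastic differential equation driven by a fixed Brownian motion \(W\) and a fixed Poisson random measure \(\mu\) with compensator \(\llambda\otimes\n\), of the form
\[
Y_s = \omega(s\wedge t) + \int_t^s b(f_r, r, Y)\,dr + \int_t^s \sigma(f_r,r,Y)\,dW_r + \int_t^s\!\!\int_L k(f_r,r,Y,z)\,(\mu-\llambda\otimes\n)(dz\,dr),
\]
for \(s\geq t\), with a fixed (relaxed or measurable) control process \((f_r)\). The Lipschitz bounds in \(\sup_{s\le t}\|\omega(s)-\alpha(s)\|\) together with the integrable dominating functions \(\beta,\gamma\) with \(\int(\beta\vee\beta^2)\,d\n<\infty\), \(\int(\gamma\vee\gamma^2)\,d\n<\infty\) give \emph{strong existence and uniqueness} as well as the key \emph{stability estimate}: if \(Y^n\) solves the SDE started from \((t^n,\omega^n)\) with the \emph{same} \(W\), \(\mu\) and (a suitably transported) control, then \(E[\sup_{s\le T}\|Y^n_s - Y^0_s\|^2]\to 0\) as \((t^n,\omega^n)\to(t^0,\omega^0)\) in \(\d\). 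One must be careful about two path-space subtleties: first, that the initial segments \(\omega^n\to\omega^0\) converge in the \emph{uniform} sense forced by \(\d\) (not merely Skorokhod), which is exactly what \(\d\) encodes, so the pasting of \(\omega^n\) on \([0,t^n]\) with the SDE solution afterwards is stable; second, that the control used for \((t^n,\omega^n)\) must be produced from the one for \((t^0,\omega^0)\) — here one transports \(f^0\), viewed as a measurable function of the driving noise (or, in the relaxed formulation, as a measure-valued process), possibly after a time shift by \(t^n - t^0\), and verifies the admissibility constraint \((f_r,\cdot)\in\Theta\) is preserved. Convergence in \(L^2\) of the whole path then implies \(P^n = \mathrm{Law}(Y^n)\to P^0 = \mathrm{Law}(Y^0)\) in the Skorokhod \(J_1\) topology (uniform convergence is stronger), and the admissibility \(P^n\in\cP^W(t^n,\omega^n)\) is read off from the SDE. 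This gives lower hemicontinuity.

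The main obstacle I anticipate is \emph{not} the SDE stability estimate itself (that is the routine Gronwall-and-Burkholder-Davis-Gundy computation, using the square-integrable dominators from Condition~\ref{cond: main3}), but rather the bookkeeping needed to legitimately run all the SDEs on one common stochastic basis while matching the prescribed past trajectories \(\omega^n\) on \([0,t^n]\) — in particular making sense of ``the same control'' for different initial conditions and different initial times, so that one really lands in the weak-control set \(\cP^W(t^n,\omega^n)\) and not merely in some larger set. This is precisely the point where working with weak rather than relaxed controls pays off, since the extra structural rigidity of weak controls (a genuine filtered probability space carrying \(W\) and \(\mu\) with the canonical process adapted) makes the transport well-defined; one then invokes strong existence to realize the candidate \(P^n\) concretely. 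A secondary technical point is the passage in step (1) from continuous to \(\cC(t,\omega)\)-a.s. lower semicontinuous \(\psi\): here I would fix \((t,\omega)\), write \(\psi = \sup_m \psi_m\) for an increasing sequence of bounded continuous \(\psi_m\) off the \(\cC(t,\omega)\)-null exception set, use monotone convergence for each \(P\in\cC(t,\omega)\) to get \(v(t,\omega) = \sup_m \sup_{P}E^P[\psi_m]\), and combine with lower semicontinuity of each \(v_m := \sup_P E^P[\psi_m\circ]\) (which requires a mild argument that the \(\psi_m\) can be chosen \emph{globally} continuous, or at least lower semicontinuous everywhere, so Berge applies) — the standard trick is \(\psi_m := \inf_{\alpha}(\psi(\alpha) + m\,\rho(\cdot,\alpha))\wedge \|\psi\|_\infty\) with \(\rho\) a metric for \(J_1\), which is globally Lipschitz hence continuous, increases to the lower-semicontinuous envelope of \(\psi\), and agrees with \(\psi\) \(\cC(t,\omega)\)-a.s.
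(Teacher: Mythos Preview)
Your core strategy---identify \(v=v^W\), prove lower hemicontinuity of \((t,\omega)\mapsto\cP^W(t,\omega)\) via an SDE representation together with a Gronwall/BDG stability estimate under the Lipschitz bounds of Condition~\ref{cond: main3}, and then conclude---is exactly what the paper does. Two points of execution differ and are worth flagging.

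First, your step~(1) reduction to continuous \(\psi\) via Moreau--Yosida is valid but unnecessary. The paper bypasses it entirely: once \(Q^n\to Q^0\) weakly with \(Q^0\in\cP^W(t^0,\omega^0)\), one applies the \emph{almost-sure} continuous mapping theorem (e.g.\ \cite[Example~17, p.~73]{pollard}) directly to the bounded, \(Q^0\)-a.s.\ lower semicontinuous function \(\psi\circ X\), yielding \(\liminf_n E^{Q^n}[\psi(X)]\ge E^{Q^0}[\psi(X)]\) in one line. This is both shorter and avoids the bookkeeping about whether your \(\psi_m\) are globally continuous.

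Second, your anticipated ``main obstacle''---transporting the control to different initial data, possibly with a time shift---does not arise in the paper's argument. The paper first extracts from \(Q^0\in\cP^W(t^0,\omega^0)\) a \((\cG_t)\)-predictable \(F\)-valued map \(\gamma_s(m)\) such that \(M=\delta_{\gamma_\cdot(M)}\,d\llambda\) a.s.; then, on a \emph{single} extension of \((\Omega\times\m,Q^0)\) carrying one Brownian motion \(W\) and one Poisson random measure \(\mathfrak p\), it builds \(X^n\) as the strong solution of the SDE with initial segment \(\omega^n\) on \([0,t^n]\) and the \emph{same} control process \(\gamma_\cdot(M)\) (no shift, no transport). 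Admissibility \(Q^n:=\mathrm{Law}(X^n,\delta_{\gamma}\,d\llambda)\in\cP^W(t^n,\omega^n)\) is then immediate from \cite[Theorem~II.2.42]{JS}, and the discrepancy on the interval \([t^n\wedge t^0,\,t^n\vee t^0]\) is controlled by the boundedness in Condition~\ref{cond: main1}~(iv), not by any alignment of the control. Framing the control as a function of the canonical \(\m\)-coordinate (rather than of the driving noise) is what makes this clean.
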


Theorem \ref{theo: value function lower semi} extends  \cite[Theorems~4.3 and 4.7]{CN22a} on the lower semicontinuity of the value function from a path-continuous to a setting with jumps. Moreover, it gives a positive answer to a problem posed in \cite[Remark~4.43]{hol16}, as it establishes lower semicontinuity of the value function. We refer to \cite[Remark~3.4]{K19} and 
\cite[Remark~5.4]{K21} for further comments.

\smallskip
Putting the Theorems~\ref{theo: value function upper semi} and \ref{theo: value function lower semi} together, we obtain the following result on the continuity of the value function.

\begin{corollary}
    Suppose that the Conditions \ref{cond: main2} and \ref{cond: main3} hold. Then, for every \(T > 0\), the value function \(v \colon [0, T] \times D([0, T]; \bR^d) \to \bR\) is continuous at \((t, \omega) \in [0, T] \times D ([0, T]; \bR^d)\) for every bounded \(\mathcal{C}(t, \omega)\)-a.s. continuous input function \(\psi \colon \Omega \to \bR\).
\end{corollary}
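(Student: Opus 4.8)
The plan is to derive the corollary directly from Theorems~\ref{theo: value function upper semi} and \ref{theo: value function lower semi}, using the elementary fact that a real-valued function on a metric space is continuous at a point if and only if it is both upper and lower semicontinuous there. Fix $T > 0$ and $(t, \omega) \in [0, T] \times D([0, T]; \bR^d)$, and let $\psi \colon \Omega \to \bR$ be bounded and $\mathcal{C}(t, \omega)$-a.s. continuous. First I would check that the hypotheses of both theorems are met: by its very formulation, Condition~\ref{cond: main2} subsumes Condition~\ref{cond: main1}, so assuming Conditions~\ref{cond: main2} and \ref{cond: main3} we have at once the hypothesis of Theorem~\ref{theo: value function upper semi} (namely Condition~\ref{cond: main2}) and the hypotheses of Theorem~\ref{theo: value function lower semi} (namely Conditions~\ref{cond: main1} and \ref{cond: main3}).

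Next I would observe that a bounded $\mathcal{C}(t, \omega)$-a.s. continuous function is in particular bounded, $\mathcal{C}(t, \omega)$-a.s. upper semicontinuous, and $\mathcal{C}(t, \omega)$-a.s. lower semicontinuous. Indeed, for each $Q \in \mathcal{C}(t, \omega)$ there is a $Q$-null set $N_Q \subset \Omega$ such that $\psi$ restricted to $\Omega \setminus N_Q$ is continuous; on $\Omega \setminus N_Q$ the function $\psi$ is then both upper and lower semicontinuous, and since the notion of ``$\mathcal{C}(t,\omega)$-a.s.'' allows the exception set to depend on $Q$ (as emphasized before Theorem~\ref{theo: value function upper semi}), the claim follows. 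Applying Theorem~\ref{theo: value function upper semi} to $\psi$ yields upper semicontinuity of $v$ at $(t, \omega)$, and applying Theorem~\ref{theo: value function lower semi} to $\psi$ yields lower semicontinuity of $v$ at $(t, \omega)$; combining the two gives continuity of $v$ at $(t, \omega)$, which is the assertion.

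There is no genuine obstacle here, as the entire analytic substance already resides in Theorems~\ref{theo: value function upper semi} and \ref{theo: value function lower semi}; the corollary is a purely formal combination. The only point requiring a moment's care is the decomposition of ``a.s. continuous'' into ``a.s. upper semicontinuous'' and ``a.s. lower semicontinuous'' with possibly distinct, though $Q$-wise null, exception sets, which is exactly what the chosen ``a.s.'' convention (rather than a ``quasi sure'' one) accommodates.
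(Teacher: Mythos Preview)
Your proposal is correct and matches the paper's approach exactly: the paper simply states that the corollary follows by ``putting the Theorems~\ref{theo: value function upper semi} and \ref{theo: value function lower semi} together,'' without giving further details. Your verification that Condition~\ref{cond: main2} already contains Condition~\ref{cond: main1}, together with the decomposition of \(\mathcal{C}(t,\omega)\)-a.s.\ continuity into \(\mathcal{C}(t,\omega)\)-a.s.\ upper and lower semicontinuity, is precisely the intended (and only) argument.
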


\begin{remark} \label{rem: as continuity}
Notice that \(X_t\) is neither upper nor lower semicontinuous.
However, for \((s, \omega) \in \of 0, \infty\of\) and \(t \geq s\), \(X_t\) is \(\cC (s, \omega)\)-a.s. continuous by~\cite[Corollary II.1.19; VI.2.3]{JS}.
In particular, for an upper (lower) semicontinuous function \(\psi \colon \mathbb{R}^d \to \bR\), the map  \(\psi (X_t)\) is \(\cC (s, \omega)\)-a.s. upper (lower) semicontinuous. 
\end{remark}

\section{The Markovian Case} \label{sec: markovian}
Throughout this section, we assume that the correspondence \( \Theta \) has a \emph{Markovian structure}, i.e., for every \( (t, \omega) \in \of 0, \infty \of \), the set \( \Theta(t, \omega) \) depends on \((t, \omega)\) only through the value~\( \omega(t-) \). More precisely, we assume the existence of three Borel functions \(\bb \colon F \times \bR^d \to \bR^d\), \(\bs \colon F \times \bR^d \to \bR^{d \times r}\) and \(\bk \colon F \times \bR^d \hspace{0.05cm} \times\hspace{0.05cm} L \to \bR^d\)
such that
\[
b(f,t,\omega) = \bb(f, \omega(t-)), \quad \sigma(f,t,\omega) = \bs(f, \omega(t-)), \quad 
k(f,t, \omega, z) = \bk(f, \omega(t-), z), 
\]
for all \((f,t, \omega) \in F \times \of 0, \infty \of\) and \(z \in L\).

As in the general path-dependent setting considered in Section \ref{sec: regularity}, we introduce appropriate assumptions on the coefficients \(\bb, \bs\) and \(\bk\) in order to obtain regularity results.

\begin{condition} \label{cond: main1 markov}
    \quad
    \begin{enumerate}
        \item[\textup{(i)}] \(F\) is a compact metrizable space.
        \item[\textup{(ii)}] \(\Theta\) is convex-valued.
        \item[\textup{(iii)}] The map \((f, x) \mapsto (\bb (f, x), \bs \bs^* (f, x), \n \o \bk (f, x, \cdot)^{-1})\) is continuous from \(F \times \bR^d\) into \(\bR^d \times \bS \times \mathcal{L}\).
        \item[\textup{(iv)}] There exists a constant \(C > 0\) such that 
        \[
        \|\bb (f, x)\| + \|\bs (f,x)\| + \int \,(\|\bk (f, x, z)\|^2 \wedge 1)\, \n (dz) \leq C
        \]
        for all \((f, x) \in F \times \bR^d\). Moreover,
        \[
        \sup \big\{ \n (\|\bk (f, x, \cdot ) \| > R ) \colon (f, x) \in F \times \bR^d \big\} \to 0 \text{ as } R \to \infty.
        \]
    \end{enumerate}
\end{condition}

\begin{condition} \label{cond: main3 markov}
For every \(N > 0\), there is a constant \(C = C_N > 0\) and a Borel function \(\gamma = \gamma_N \colon L \to [0, \infty]\) such that \(\int (\gamma(z) \vee \gamma^2 (z)) \, \n (dz) < \infty\) and 
\begin{align*}
\|\bb (f, x) - \bb (f, y)\| + \|\bs (f, x) - \bs (f, y) \| &\leq C  \|x - y\|,  \\
\| \bk (f, x, z) - \bk (f, y, z)\| &\leq \gamma (z)  \|x - y\|
\end{align*}
for all \((f, z) \in F \times L\) and \(x,y \in \bR^d \colon \|x\| \vee \|y\| \leq N\). Furthermore, there exists a Borel function \(\beta \colon L \to [0, \infty]\) such that \(\int (\beta (z) \vee \beta^2 (z)) \, \n (dz) < \infty\) and 
\[
\| \bk (f, x, z)\| \leq \beta (z) 
\]
for all \((f, x, z) \in F \times \bR^d \hspace{0.05cm}\times \hspace{0.05cm} L\).
\end{condition}

\begin{remark} \label{rem: cond in markov}
Notice that the Conditions \ref{cond: main1} and \ref{cond: main2} hold true once Condition \ref{cond: main1 markov} is in force. Indeed, by parts (i) and (iii) of Condition \ref{cond: main1 markov} and Berge's maximum theorem (\cite[Theorem~17.31]{charalambos2013infinite}), for every convergent sequence \(\omega^n \to \omega^0\), the difference
\[
\sup_{f \in F}| \mathcal{L} g (f, s, \omega^n, \omega^n (s)) - \mathcal{L} g (f, s, \omega^0, \omega^0 (s)) |
\]
vanishes for \(\llambda\)-a.e. \(s \in \bR_+\) as \(n \to \infty\). Hence, thanks to the dominated convergence theorem, Condition \ref{cond: main2} holds in this case.
Further, Condition \ref{cond: main3 markov} guarantees Condition~\ref{cond: main3}.
\end{remark}

\subsection{The Semigroup and its Feller Properties}
For each \( x \in \bR^d \), let \(\bx \in \Omega\) be the constant path \(\bx (s) = x\) for all \(s \in \bR_+\).
We define the sublinear operator \( \cE^x \) on the convex cone of upper semianalytic functions \(\psi \colon \Omega \to [- \infty, \infty]\) by
\( \cE^x(\psi) := \sup_{P \in \cC(0,\bx)} E^P \big[ \psi \big] \). For every \( x \in \bR^d \), we have by construction that \( \cE^x(\psi(X_0)) = \psi(x) \) for every upper semianalytic function \(\psi \colon \bR^d \to [- \infty, \infty]\).

\begin{definition}
Let \( \mathcal{H} \) be a convex cone of functions \( f \colon \bR^d \to \bR \) containing all constant functions.
A family of sublinear operators \( S_t \colon \mathcal{H} \to \mathcal{H}, \ t \in \bR_+,\) is called a \emph{sublinear Markovian semigroup} on \( \mathcal{H} \) if it satisfies the following properties:
\begin{enumerate}
    \item[\textup{(i)}] \( (S_t)_{t \in \bR_+} \) has the semigroup property, i.e., 
          \( S_s S_t = S_{s+t} \) for all \(s, t \in \bR_+ \)  and
          \( S_0 = \on{id} \),
    
    \item[\textup{(ii)}] \( S_t \) is monotone for each \( t \in \bR_+\), i.e., 
    \( f, g \in \mathcal{H} \) with \( f \leq g \) implies \(S_t(f) \leq S_t(g) \),
    
    \item[\textup{(iii)}] \( S_t \) preserves constants for each  \( t \in \bR_+\), i.e.,
    \( S_t(c) = c \) for each \( c \in \bR  \).
\end{enumerate}
\end{definition}

Denote, for \(t \in \mathbb{R}_+\), the shift operator \(\theta_t \colon \Omega \to \Omega\) by \(\theta_t (\omega) := \omega(\cdot + t)\) for all \(\omega \in \Omega\).
The next result provides the nonlinear Markov property of the family
\( \{ \cE^x \colon x \in \bR^d\}\), cf. \cite[Lemma 4.32]{hol16} and \cite[Proposition 2.8]{CN22b}. We omit a detailed proof.

\begin{proposition} \label{prop: markov property}
For every upper semianalytic function \( \psi \colon \Omega \to [- \infty, \infty] \), the equality
\[
\cE^x( \psi \circ \theta_t) = \cE^x ( \cE^{X_t} (\psi))
\]
holds for every \((t, x) \in \bR_+ \times \bR^d\).
\end{proposition}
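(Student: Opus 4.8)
The plan is to deduce the identity $\cE^x(\psi \circ \theta_t) = \cE^x(\cE^{X_t}(\psi))$ from the Dynamic Programming Principle (Theorem \ref{theo: DPP}) applied at the deterministic stopping time $\tau \equiv t$, together with the observation that both sides can be re-expressed through the sets $\cC(0,\bx)$ and $\cC(t,\omega)$. First I would unwind the definitions: $\cE^x(\cE^{X_t}(\psi))$ should equal $\sup_{P \in \cC(0,\bx)} E^P[ w(t,X)]$ where $w(t,\omega) := \cE^{\omega(t)}(\psi) = \sup_{Q \in \cC(0,\mathsf{X}_{\omega(t)})} E^Q[\psi]$. On the other side, $\cE^x(\psi \circ \theta_t) = \sup_{P \in \cC(0,\bx)} E^P[\psi(X_{\cdot+t})]$. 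So the claim reduces to showing, for each $P \in \cC(0,\bx)$, that $E^P[\psi(X_{\cdot+t})]$ is controlled above by $\cE^x(w(t,X))$ and, conversely, that the supremum over $P$ of $E^P[\psi(X_{\cdot+t})]$ dominates $\sup_P E^P[w(t,X)]$.

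The key tool is the comparison between $w(t,\omega)$ and the value function $v(t,\omega) = \sup_{R \in \cC(t,\omega)} E^R[\psi]$ from \eqref{eq: def vf}, specialized to the Markovian structure: here $\cC(t,\omega)$ depends on $\omega$ only through $\omega(t-)$ on the ``future'' part, so that the law of $X_{\cdot+t}$ under any $R \in \cC(t,\omega)$ is (after the shift) a semimartingale law whose characteristics lie in $\Theta(\cdot, \mathsf{X}_{\omega(t)})$, i.e.\ corresponds to an element of $\cC(0,\mathsf{X}_{\omega(t)})$; conversely every $Q \in \cC(0,\mathsf{X}_{\omega(t)})$ can be concatenated with the frozen path $\omega$ on $[0,t]$ to produce an element of $\cC(t,\omega)$. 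This gives $v(t,\omega) = w(t,\omega)$ up to the usual subtlety that $\cC(t,\omega)$ pins the path to $\omega$ on $[0,t]$ while $w$ only sees $\omega(t)$; since $\psi \circ \theta_t$ ignores the path before $t$, this is harmless. Then Theorem \ref{theo: DPP} with $\tau \equiv t$ gives
\[
\sup_{P \in \cC(0,\bx)} E^P[\psi \circ \theta_t] = v(0,\bx) \quad\text{applied to the shifted input } \psi\circ\theta_t,
\]
and more precisely, applying the DPP to the input function $\psi \circ \theta_t$ (which is again upper semianalytic, the shift being continuous) at stopping time $\tau \equiv t$ yields $\cE^x(\psi\circ\theta_t) = \sup_{P\in\cC(0,\bx)} E^P[ \widetilde v(t,X)]$, where $\widetilde v(t,\omega) = \sup_{R\in\cC(t,\omega)}E^R[\psi\circ\theta_t]$; and $\widetilde v(t,\omega) = v_\psi(t,\omega) \circ$ (shift identification) $= w(t,\omega) = \cE^{X_t(\omega)}(\psi)$, which is exactly $\cE^x(\cE^{X_t}(\psi))$.

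The main obstacle I anticipate is the careful bookkeeping of the measurability and the ``concatenation/restriction'' correspondence between $\cC(0,\mathsf{X}_{y})$ and $\cC(t,\omega)$: one must check that gluing the deterministic path $\omega|_{[0,t]}$ to a shifted semimartingale law in $\cC(0,\mathsf{X}_{\omega(t)})$ lands in $\fPas(t)$ with the right characteristics (this is a standard but slightly technical pasting argument for semimartingale laws, as in the references \cite{criens19, JS}), and that the map $(t,\omega)\mapsto w(t,\omega)$ is upper semianalytic so that the outer expectation $\cE^x(w(t,X))$ makes sense and the DPP applies. Since the proposition is stated with ``we omit a detailed proof'' and cites \cite[Lemma 4.32]{hol16} and \cite[Proposition 2.8]{CN22b}, I would structure the write-up as: (1) reduce to $v_{\psi\circ\theta_t}(t,\omega) = \cE^{X_t(\omega)}(\psi)$ via the Markovian structure and the shift-invariance of $\psi\circ\theta_t$; (2) invoke Theorem \ref{theo: DPP} at $\tau\equiv t$ for the input $\psi\circ\theta_t$; (3) combine. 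The pasting lemma and the upper-semianalyticity of $w$ are the two points where I would either cite the prior works or supply a short measurable-selection argument.
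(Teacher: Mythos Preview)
Your proposal is correct and follows the standard route that the paper's cited references (\cite[Lemma 4.32]{hol16}, \cite[Proposition 2.8]{CN22b}) take: apply the DPP of Theorem~\ref{theo: DPP} at the deterministic time \(\tau\equiv t\) to the shifted input \(\psi\circ\theta_t\), and use the Markovian structure of \(\Theta\) to identify \(\sup_{R\in\cC(t,\omega)}E^R[\psi\circ\theta_t]\) with \(\cE^{\omega(t)}(\psi)\) via the concatenation/restriction bijection between \(\cC(t,\omega)\) and \(\cC(0,\mathsf{X}_{\omega(t)})\). The two technical checks you flag (the pasting of semimartingale laws and the upper semianalyticity of \(\omega\mapsto\cE^{\omega(t)}(\psi)\)) are exactly the points handled in those references, so citing them is appropriate. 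One minor remark: be careful with \(\omega(t)\) versus \(\omega(t-)\) when invoking the Markovian form of \(\Theta\); under any \(R\in\cC(t,\omega)\) the coordinate process is quasi-left continuous after \(t\), so the distinction is \(\llambda\)-null and does not affect the differential characteristics, but it is worth saying explicitly.
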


Proposition \ref{prop: markov property} confirms the intuition that the coordinate process is a nonlinear Markov process under the family \( \{\cE^x \colon x \in \bR^d\} \), as it implies the equality
\[ \cE^x(\psi(X_{s+t})) = \cE^x( \cE^{X_t}( \psi (X_s))) \]
for every upper semianalytic function \(\psi \colon \bR^d \to [- \infty, \infty]\), \(s,t \in \bR_+ \) and \(x \in \bR^d\).

We introduce a family of operators \( (S_t)_{t \in \bR_+} \) given by
\begin{equation} \label{eq: semigroup}
    S_t ( \psi )(x) := \cE^x(\psi(X_t)), \quad (t, x) \in \bR_+ \times \bR^d.
\end{equation}

The following result is an immediate consequence of Proposition~\ref{prop: markov property}. It is a restatement of \cite[Remark~4.33]{hol16} for our framework.

\begin{proposition} 
The family of operators \( (S_t)_{t \in \bR_+} \) from \eqref{eq: semigroup}
defines a sublinear Markovian semigroup on the set of bounded upper semianalytic functions.
\end{proposition}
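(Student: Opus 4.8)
The plan is to verify the three defining properties of a sublinear Markovian semigroup for the operators $(S_t)_{t \in \bR_+}$ defined in \eqref{eq: semigroup}, together with the fact that $S_t$ maps bounded upper semianalytic functions to bounded upper semianalytic functions. For the first point, one checks that if $\psi \colon \bR^d \to \bR$ is bounded and upper semianalytic, then so is $\omega \mapsto \psi(X_t(\omega))$ on $\Omega$ (the coordinate map $\omega \mapsto \omega(t)$ is Borel, hence the composition is upper semianalytic), and then that $S_t(\psi) = \cE^{\,\cdot\,}(\psi(X_t))$ is upper semianalytic in $x$ by Theorem~\ref{theo: DPP} (with $\psi$ there replaced by $\psi \circ X_t$), since $S_t(\psi)(x) = v(0, \bx)$ for the constant path $\bx$ and the map $x \mapsto \bx$ is continuous, hence Borel. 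Boundedness of $S_t(\psi)$ is immediate from $\|S_t(\psi)\|_\infty \le \|\psi\|_\infty$, which holds because each $P \in \cC(0,\bx)$ is a probability measure.

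Next I would establish the semigroup property (i). The identity $S_0 = \on{id}$ follows from $\cE^x(\psi(X_0)) = \psi(x)$, which holds by construction since every $P \in \cC(0,\bx)$ satisfies $P(X_0 = x) = 1$. For $S_s S_t = S_{s+t}$, the key observation is that $\psi(X_{s+t}) = (\psi(X_s)) \circ \theta_t$, so Proposition~\ref{prop: markov property} applied to the upper semianalytic function $\omega \mapsto \psi(X_s(\omega))$ gives
\[
\cE^x\big(\psi(X_{s+t})\big) = \cE^x\big(\cE^{X_t}(\psi(X_s))\big).
\]
Unwinding the definitions, the right-hand side is $\cE^x\big((S_t(\psi))(X_t)\big) = \cdots$, wait, more carefully: $\cE^{X_t}(\psi(X_s))$ as a function of the endpoint is exactly $(x' \mapsto S_s(\psi)(x'))$ evaluated at $X_t$, so the right-hand side equals $\cE^x\big((S_s(\psi))(X_t)\big) = S_t(S_s(\psi))(x) = (S_t S_s)(\psi)(x)$. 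Since $s,t$ are arbitrary this is the semigroup property; one should just be careful that the inner function $S_s(\psi)$ is itself bounded upper semianalytic (established above) so that Proposition~\ref{prop: markov property} and the outer operator $\cE^x$ apply legitimately.

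Finally, monotonicity (ii) and preservation of constants (iii) are routine: if $f \le g$ then $f(X_t) \le g(X_t)$ pointwise on $\Omega$, so $E^P[f(X_t)] \le E^P[g(X_t)]$ for every $P$, and taking suprema over $P \in \cC(0,\bx)$ gives $S_t(f)(x) \le S_t(g)(x)$; and $S_t(c)(x) = \sup_{P \in \cC(0,\bx)} E^P[c] = c$ since each $P$ is a probability measure. I expect the only genuinely delicate point to be the measurability/upper-semianalyticity bookkeeping — namely confirming that $x \mapsto S_t(\psi)(x)$ is upper semianalytic, for which one invokes the already-proven Theorem~\ref{theo: DPP} with the input function $\psi \circ X_t$, and that the inner function in the semigroup identity stays within the class to which Proposition~\ref{prop: markov property} applies; the algebraic content of the semigroup, monotonicity, and constant-preservation properties is then immediate. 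Since all of this is a direct restatement of \cite[Remark~4.33]{hol16} adapted to the present setting, a brief proof suffices.
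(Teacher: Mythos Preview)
Your proposal is correct and follows the same approach as the paper, which simply states that the result is an immediate consequence of Proposition~\ref{prop: markov property} (and a restatement of \cite[Remark~4.33]{hol16}) without providing further details. One minor omission: the definition of a sublinear Markovian semigroup presupposes that each \(S_t\) is a \emph{sublinear} operator, so you should also note that \(S_t(\psi+\phi)\le S_t(\psi)+S_t(\phi)\) and \(S_t(\lambda\psi)=\lambda S_t(\psi)\) for \(\lambda\ge 0\), both of which are immediate from the supremum structure of \(\cE^x\).
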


In the following, we investigate the semigroup property of \((S_t)_{t \in \mathbb{R}_+}\) on convex cones consisting of more regular functions.

\begin{definition}
We say that the sublinear Markovian semigroup \((S_t)_{t \in \mathbb{R}_+}\) has the 
\begin{enumerate}
    \item[\textup{(a)}]
    \emph{\(\usc_b\)--Feller property} if it is a sublinear Markovian semigroup on the space \(\usc_b(\mathbb{R}^d; \mathbb{R})\) of bounded upper semicontinuous functions from \(\mathbb{R}^d\) into \(\mathbb{R}\);
    \item[\textup{(b)}]
       \emph{\(\lsc_b\)--Feller property} if it is a sublinear Markovian semigroup on the space \(\lsc_b(\mathbb{R}^d; \mathbb{R})\) of bounded lower semicontinuous functions from \(\mathbb{R}^d\) into \(\mathbb{R}\);
    \item[\textup{(c)}]
     \emph{\(C_b\)--Feller property} if it is a sublinear Markovian semigroup on the space \(C_b(\mathbb{R}^d; \mathbb{R})\) of bounded continuous functions from \(\mathbb{R}^d\) into \(\mathbb{R}\).

\end{enumerate}
\end{definition}

As observed in \cite{schilling98}, in case of linear semigroups, the \(\usc_b\)–Feller property is
equivalent to the \(C_b\)–Feller property. Indeed, this follows simply from the fact that 
\(\usc_b(\bR^d; \bR) = - \lsc_b(\bR^d; \bR)\).

Together with Remark \ref{rem: as continuity}, the regularity of the value function \(v\) established in Theorems~\ref{theo: value function upper semi} and \ref{theo: value function lower semi} grants access to Feller properties of the associated semigroup \( (S_t)_{t \in \bR_+}\). The \(C_b\)--Feller property is of fundamental importance for the relation of
nonlinear processes and semigroups. In the next section, it enables us to link the semigroup \( (S_t)_{t \in \bR_+}\) to a certain Kolmogorov type PDE.

For a nonlinear framework with jumps, the \(\usc_b\)--Feller property property was given in  \cite[Theorem~4.41, Lemma~4.42]{hol16}.
For nonlinear one-dimensional diffusions, a version of the following theorem was established in \cite{CN22b}. 
Theorem~\ref{thm: feller} extends results for the L\'evy case from \cite{hol16,K19,neufeld2017nonlinear} to a Markovian framework.

\begin{theorem} \label{thm: feller}
Consider the family of operators  \( (S_t)_{t \in \bR_+} \) as given in \eqref{eq: semigroup}.
Then, the following statements hold:
\begin{enumerate}
    \item[\textup{(i)}]  \((S_t)_{t \in \bR_+}\) has the \(\usc_b\)--Feller property, if the Condition \ref{cond: main1 markov} holds;

    \item[\textup{(ii)}] \((S_t)_{t \in \bR_+}\) has the \(C_b\)--Feller property, if the Conditions \ref{cond: main1 markov} and \ref{cond: main3 markov} hold. 
\end{enumerate}
\end{theorem}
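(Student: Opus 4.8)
The plan is to deduce both Feller properties from the regularity results for the value function established in Theorems \ref{theo: value function upper semi} and \ref{theo: value function lower semi}, specialized to the Markovian setting via Remark \ref{rem: cond in markov}, together with the almost-sure continuity of the evaluation map $X_t$ recorded in Remark \ref{rem: as continuity}. The key observation is that for $\psi \colon \bR^d \to \bR$ bounded, the function $\omega \mapsto \psi(X_t(\omega)) = \psi(\omega(t))$ is bounded, and by Remark \ref{rem: as continuity} it is $\cC(0,\bx)$-a.s. upper (resp. lower, resp. continuous) at every $\bx$ whenever $\psi$ is upper semicontinuous (resp. lower semicontinuous, continuous) on $\bR^d$. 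Hence $S_t(\psi)(x) = \cE^x(\psi(X_t)) = v(0, \bx)$ (in the notation of \eqref{eq: def vf}, with input function $\omega \mapsto \psi(\omega(t))$), so regularity of $S_t(\psi)$ in $x$ follows from regularity of $v$ in $\omega$ along the embedding $x \mapsto \bx$.

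For part (i): assume Condition \ref{cond: main1 markov}. By Remark \ref{rem: cond in markov} this implies Conditions \ref{cond: main1} and \ref{cond: main2}, so Theorem \ref{theo: value function upper semi} applies and $v \colon [0,T] \times D([0,T];\bR^d) \to \bR$ is upper semicontinuous (with respect to the metric $\d$) at every point, for every bounded $\cC(t,\omega)$-a.s.\ upper semicontinuous input. Fix $t \in \bR_+$, fix $T > t$, and take $\psi \in \usc_b(\bR^d;\bR)$. The input $\omega \mapsto \psi(\omega(t))$ is bounded and, by Remark \ref{rem: as continuity}, $\cC(0,\bx)$-a.s.\ upper semicontinuous for each $x$; since upper semicontinuity is what Theorem \ref{theo: value function upper semi} delivers, $v(0,\cdot)$ is upper semicontinuous at $\bx$ for each $x \in \bR^d$. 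Now if $x^n \to x^0$ in $\bR^d$, then $\bx^n \to \bx^0$ in the Skorokhod $J_1$ topology and $\d((0,\bx^n),(0,\bx^0)) = \sup_{r}\|x^n - x^0\| = \|x^n - x^0\| \to 0$, so $\limsup_n S_t(\psi)(x^n) = \limsup_n v(0,\bx^n) \le v(0,\bx^0) = S_t(\psi)(x^0)$. Thus $S_t(\psi) \in \usc_b(\bR^d;\bR)$. Together with the semigroup, monotonicity and constant-preserving properties already recorded for $(S_t)_{t\in\bR_+}$ on bounded upper semianalytic functions (which restrict to $\usc_b$), this gives the $\usc_b$--Feller property. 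Part (ii) is entirely analogous: Condition \ref{cond: main1 markov} together with Condition \ref{cond: main3 markov} yields, via Remark \ref{rem: cond in markov}, Conditions \ref{cond: main1}, \ref{cond: main2} and \ref{cond: main3}, so the Corollary after Theorem \ref{theo: value function lower semi} applies and $v(0,\cdot)$ is continuous at each $\bx$ for every bounded $\cC(0,\bx)$-a.s.\ continuous input; applying this to $\omega\mapsto\psi(\omega(t))$ for $\psi \in C_b(\bR^d;\bR)$ and using the same embedding argument shows $S_t(\psi) \in C_b(\bR^d;\bR)$, which is the $C_b$--Feller property.

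The only genuine point requiring care — and the step I expect to be the main obstacle to write cleanly — is the passage from regularity of $v$ on $[0,T]\times D([0,T];\bR^d)$ in the pseudometric $\d$ to regularity of $x\mapsto S_t(\psi)(x)$ on $\bR^d$. One must check that (a) restricting the path-dependent input $\omega\mapsto\psi(\omega(t))$ to the slice $\{0\}\times D([0,T];\bR^d)$ and then to constant paths is legitimate, i.e.\ that $\cC(0,\bx)$ as defined on $\Omega = D(\bR_+;\bR^d)$ matches the control set governing $v$ on the finite horizon (this is a routine consistency of semimartingale laws under restriction of the time interval, and $\psi(\omega(t))$ only depends on $\omega|_{[0,T]}$ for $T \ge t$), and (b) the map $x\mapsto\bx$ is continuous from $\bR^d$ into $([0,T]\times D([0,T];\bR^d),\d)$, which is immediate from the formula for $\d$. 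A secondary bookkeeping point is that $\usc_b$ and $C_b$ are convex cones containing the constants and are stable under the operations in the definition of a sublinear Markovian semigroup, so that once we know $S_t$ maps each of these spaces into itself, the remaining axioms are inherited from the already-established semigroup on bounded upper semianalytic functions. No new estimates are needed beyond those already contained in Theorems \ref{theo: value function upper semi} and \ref{theo: value function lower semi}.
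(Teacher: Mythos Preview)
Your proposal is correct and follows essentially the same approach as the paper: both deduce the Feller properties from Theorems~\ref{theo: value function upper semi} and \ref{theo: value function lower semi} via Remark~\ref{rem: as continuity} and the continuity of the embedding \(\bR^d \ni x \mapsto (0,\bx)\) into \(([0,T]\times D([0,T];\bR^d),\d)\). Your writeup is somewhat more explicit about the bookkeeping (invoking Remark~\ref{rem: cond in markov} and the inheritance of the semigroup axioms), but the core argument is identical.
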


The proof of Theorem \ref{thm: feller} is given in Section \ref{sec: pf feller}.

\subsection{Generator Equation}
For a sublinear Markovian semigroup \( (T_t)_{t \in \bR_+} \) on a convex cone~\(\cH\), its \emph{pointwise (infinitesimal) generator} \( A \colon  \cD(A) \to \cH \)  is defined by 
\begin{align*}
A(\phi)(x) & := \lim_{t \to 0} \frac{T_t(\phi)(x) - \phi(x)}{t}, \quad x \in \bR^d, \ \phi \in \cD(A), \\
\cD(A) &:= \Big\{ \phi \in \cH \colon \exists g \in \cH \text{ such that }  \lim_{t \to 0} \frac{T_t(\phi)(x) - \phi(x)}{t} = g(x) \ \ \forall x \in \bR^d \Big\}.
\end{align*}

As in the linear case, the semigroup of a nonlinear Markov process can be linked to its pointwise generator through an evolution equation.
Let us start with a formal introduction to the class of nonlinear PDEs under consideration.
Given a sublinear Markovian semigroup \( (T_t)_{t \in \bR_+} \) with pointwise generator \(A\), consider the equation
\begin{equation} \label{eq: PDE A}
\begin{cases}   
\partial_t u (t, x) - A(u(t, \cdot\, ))(x) = 0, & \text{for } (t, x) \in \bR_+ \times \mathbb{R}^d, \\
u (0, x) = \psi (x), & \text{for } x \in \bR^d,
\end{cases}
\end{equation}
where \(\psi \colon \bR^d \to \mathbb{R}\) is a suitable function.

Recall that a function \(u \colon \bR_+ \times \bR^d \to \mathbb{R}\) is said to be a \emph{viscosity subsolution} to \eqref{eq: PDE A} if it is upper semicontinuous and the following two properties hold:
\begin{enumerate}
\item[\textup{(a)}] \(u(0, \cdot\,) \leq \psi\);
\item[\textup{(b)}]
\(
\partial_t \phi (t, x) - A (u(t, \cdot\,))(x) \leq 0
\)
for all \(\phi \in  C^\infty_b(\bR_+ \times \bR^d; \bR)\) such that \(u \leq \phi\) and \(\phi (t, x) = u(t, x)\) for some \((t, x) \in (0, \infty) \times \bR^d \). 
\end{enumerate}
A \emph{viscosity supersolution} is obtained by replacing upper with lower semicontinuity and reversing the inequalities. Further, \(u\) is called \emph{viscosity solution} if it is both, a viscosity sub- and supersolution.

\smallskip
The following result is a restatement of \cite[Proposition 4.10]{hol16}.

\begin{proposition} \label{prop: weak sense}
Let \((T_t)_{t \in \bR_+}\) be a sublinear Markovian semigroup on a convex cone \(\cH\) of real-valued functions on \(\bR^d\)
containing all constant functions, and let \(A \colon \cD(A) \to \cH\) be its pointwise generator. 
Consider \(\psi \in \cH\) and assume that \((t,x)\mapsto T_t(\psi)(x)\) is continuous.
If \(C^\infty_b(\bR^d; \bR) \subset \cD(A)\),
then 
\[
\bR_+ \times \bR^d \ni (t, x) \mapsto T_t(\psi)(x)
\]
is a viscosity solution to \eqref{eq: PDE A}.
\end{proposition}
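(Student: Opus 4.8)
The statement is that under the standing hypotheses — $(T_t)_{t\in\bR_+}$ a sublinear Markovian semigroup on $\cH$, $A$ its pointwise generator with $C^\infty_b(\bR^d;\bR)\subset\cD(A)$, $\psi\in\cH$ and $(t,x)\mapsto T_t(\psi)(x)$ continuous — the function $u(t,x):=T_t(\psi)(x)$ is a viscosity solution of \eqref{eq: PDE A}. The plan is to verify the subsolution and supersolution properties separately; both follow the classical Crandall–Lions recipe of testing the semigroup relation against a smooth test function, exploiting the semigroup property $T_{h}T_t = T_{t+h}$ together with monotonicity of $T_h$.

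First I would dispose of the easy pieces: $u(0,\cdot)=T_0(\psi)=\psi$ because $T_0=\mathrm{id}$, so the initial condition is met with equality (in particular both $u(0,\cdot)\le\psi$ and $u(0,\cdot)\ge\psi$ hold); and $u$ is continuous by assumption, hence both upper and lower semicontinuous, so the regularity requirement in the definitions of sub/supersolution is automatic. It remains to check the differential inequalities at interior points. For the subsolution inequality, fix $\phi\in C^\infty_b(\bR_+\times\bR^d;\bR)$ with $u\le\phi$ on $\bR_+\times\bR^d$ and $\phi(t_0,x_0)=u(t_0,x_0)$ for some $(t_0,x_0)\in(0,\infty)\times\bR^d$. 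For small $h>0$, the semigroup property and monotonicity give
\[
u(t_0,x_0) = T_{t_0}(\psi)(x_0) = T_h\big(T_{t_0-h}(\psi)\big)(x_0) = T_h\big(u(t_0-h,\cdot)\big)(x_0) \le T_h\big(\phi(t_0-h,\cdot)\big)(x_0).
\]
Subtract $u(t_0,x_0)=\phi(t_0,x_0)$, divide by $h$, and split the right-hand side: $T_h(\phi(t_0-h,\cdot))(x_0) = T_h(\phi(t_0,\cdot))(x_0) + \big[T_h(\phi(t_0-h,\cdot))(x_0) - T_h(\phi(t_0,\cdot))(x_0)\big]$. Using $T_h$-sublinearity (which gives a Lipschitz-type bound $|T_h(f)(x)-T_h(g)(x)|\le T_h(|f-g|)(x)\le \|f-g\|_\infty$ after observing that $S_t$ preserves constants and is monotone) the bracketed correction term is controlled by $\sup_x|\phi(t_0-h,x)-\phi(t_0,x)| = O(h)$ since $\phi$ is $C^1$ in time with bounded derivative; dividing by $h$ it converges to $-\partial_t\phi(t_0,x_0)$. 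For the main term, $\phi(t_0,\cdot)\in C^2_b(\bR^d;\bR)$; since $C^\infty_b(\bR^d;\bR)\subset\cD(A)$ one has $\frac{1}{h}\big(T_h(\phi(t_0,\cdot))(x_0)-\phi(t_0,x_0)\big)\to A(\phi(t_0,\cdot))(x_0)$, and by the contact condition $\phi(t_0,\cdot)\ge u(t_0,\cdot)$ with equality at $x_0$, monotonicity of $A$ on $\cH$ (which is inherited from monotonicity of the $T_h$) yields $A(\phi(t_0,\cdot))(x_0)\ge A(u(t_0,\cdot))(x_0)$ — wait, the inequality we get directly is $\liminf_{h\downarrow0}\frac1h(T_h(\phi(t_0,\cdot))(x_0)-\phi(t_0,x_0)) \ge \limsup \frac1h(T_h(u(t_0,\cdot))(x_0) - u(t_0,x_0))$, but since we already know $A(\phi(t_0,\cdot))(x_0)$ exists as a genuine limit, combining this with the above chain gives $\partial_t\phi(t_0,x_0) - A(u(t_0,\cdot))(x_0) \le 0$, which is exactly the required subsolution inequality. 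The supersolution inequality is proved symmetrically: one takes $\phi$ touching $u$ from below, uses $u(t_0,x_0) = T_h(u(t_0-h,\cdot))(x_0) \ge T_h(\phi(t_0-h,\cdot))(x_0)$, and runs the same estimate with reversed inequalities, landing on $\partial_t\phi(t_0,x_0) - A(u(t_0,\cdot))(x_0) \ge 0$.

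The main obstacle — and the point that needs to be handled with a little care rather than invoked — is the interchange of the spatial limit defining $A$ with the contact/monotonicity comparison: one must be sure that $\frac1h(T_h(u(t_0-h,\cdot))(x_0)-u(t_0-h,x_0))$ and $\frac1h(T_h(u(t_0,\cdot))(x_0)-u(t_0,x_0))$ behave comparably as $h\downarrow0$, i.e. that replacing the argument $u(t_0-h,\cdot)$ by $\phi(t_0-h,\cdot)$ (or $\phi(t_0,\cdot)$) does not lose a term of order $1/h$. This is precisely why the sublinearity estimate $|T_h(f)(x)-T_h(g)(x)|\le\|f-g\|_\infty$ is the workhorse: it turns "$u$ and $\phi$ agree at $x_0$ and $\phi$ dominates $u$" plus "$\phi$ is Lipschitz in $t$" into the needed $O(h)$ control, so that only the genuinely differentiable object $\phi(t_0,\cdot)\in C^\infty_b\subset\cD(A)$ is ever hit with the $1/h$ scaling. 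Since Proposition \ref{prop: weak sense} is stated as a restatement of \cite[Proposition 4.10]{hol16}, I would for full rigor cite that reference for the details of this estimate, while presenting the argument above as the proof sketch. No compactness or continuity of $A$ is needed beyond what the hypotheses provide; the whole proof is a soft semigroup argument.
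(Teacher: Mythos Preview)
The paper does not prove this proposition --- it simply records it as a restatement of \cite[Proposition~4.10]{hol16} --- so there is no in-paper proof to compare against; your sketch follows the standard semigroup route that underlies that reference.

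There is, however, a genuine logical gap in your subsolution argument at the passage from \(A(\phi(t_0,\cdot))(x_0)\) to \(A(u(t_0,\cdot))(x_0)\). Your chain correctly produces \(\partial_t\phi(t_0,x_0)\le A(\phi(t_0,\cdot))(x_0)\), and the monotonicity-based comparison (since \(\phi(t_0,\cdot)\ge u(t_0,\cdot)\) with equality at \(x_0\)) gives, for each \(h>0\),
\[
\tfrac{1}{h}\big(T_h(\phi(t_0,\cdot))(x_0)-\phi(t_0,x_0)\big)\ \ge\ \tfrac{1}{h}\big(T_h(u(t_0,\cdot))(x_0)-u(t_0,x_0)\big),
\]
hence \(A(\phi(t_0,\cdot))(x_0)\ge A(u(t_0,\cdot))(x_0)\) whenever the latter limit exists. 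But the two inequalities \(\partial_t\phi\le A(\phi)\) and \(A(\phi)\ge A(u)\) point the wrong way and do \emph{not} combine to give \(\partial_t\phi\le A(u)\). Your first instinct --- that \(A\) inherits monotonicity from \(T_h\) --- is simply false (take the linear heat semigroup, \(A=\Delta\)), and you rightly abandon it; but the replacement reasoning is equally broken. Note that the paper's viscosity definition is nonstandard precisely because the operator hits \(u(t,\cdot)\) rather than the test function \(\phi(t,\cdot)\); to see how this is actually handled you need to go into \cite{hol16}. A smaller point: your claim that \(\tfrac{1}{h}[T_h(\phi(t_0-h,\cdot))(x_0)-T_h(\phi(t_0,\cdot))(x_0)]\to-\partial_t\phi(t_0,x_0)\) is correct but requires more than the crude bound \(|T_h(f)-T_h(g)|\le\|f-g\|_\infty\) (which only gives \(O(1)\) after dividing by \(h\)); one must use subadditivity and positive homogeneity of \(T_h\) together with \(T_h(g)(x_0)\to g(x_0)\) for \(g\in\cD(A)\) to pin down the limit.
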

Next, we derive a formal description of the pointwise generator associated to the sublinear Markovian semigroup \((S_t)_{t \in \bR_+}\) defined in \eqref{eq: semigroup}. To this end, we define the nonlinearity \(G(x, \phi)\)
for  \((x,\phi) \in \mathbb{R}^d \times C^{2}_b(\mathbb{R}^d; \bR)\)
by
\begin{align*}
    G(x, \phi) :=  \sup \Big\{ \langle \nabla &\phi (x), \bb (f, x) \rangle
    + \tfrac{1}{2} \on{tr} \big[ \nabla^2\phi (x) \bs \bs^* (f, x) \big]
    \\& + \int\, \big[ \phi(x + \bk (f, x, z)) - \phi(x) - \langle \nabla \phi(x) , h (\bk (f, x, z)) \rangle\big]\, \n (dz)
    \colon f \in F \Big\}
\end{align*}
and consider the nonlinear Kolmogorov type partial differential equation
\begin{equation} \label{eq: PDE G}
\begin{cases}   
\partial_t u (t, x) - G (x, u(t, \cdot\,)) = 0, & \text{for } (t, x) \in \bR_+ \times \mathbb{R}^d, \\
u (0, x) = \psi (x), & \text{for } x \in \bR^d,
\end{cases}
\end{equation}
where, again, \(\psi \colon \bR^d \to \bR\) is a suitable function.

\begin{proposition} \label{prop: generator}
Suppose that Condition \ref{cond: main1 markov} holds.
Let  \((S_t)_{t \in \bR_+}\) be the family of operators
defined in \eqref{eq: semigroup}. 
Then, for every \(\phi \in C_b^2(\bR^d;\bR)\) such that \(\nabla^2\phi \) is uniformly continuous,
we have
\[
\lim_{t \to 0} \frac{S_t(\phi)(x) - \phi(x)}{t} = G(x, \phi), \quad x \in \bR^d.
\]

\end{proposition}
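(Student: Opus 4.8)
The plan is to prove the two-sided estimate $\lim_{t \to 0} t^{-1}(S_t(\phi)(x) - \phi(x)) = G(x, \phi)$ by establishing the matching upper and lower bounds separately, using Itô's formula applied to the semimartingales appearing in the control set $\cC(0, \bx)$. First I would fix $\phi \in C_b^2(\bR^d; \bR)$ with $\nabla^2 \phi$ uniformly continuous, and $x \in \bR^d$. For a lower bound, I would pick, for each $f \in F$, a (single) measure $P_f \in \cC(0, \bx)$ that runs with the constant coefficients $(\bb(f, \cdot), \bs(f, \cdot), \bk(f, \cdot, \cdot))$ "frozen at $f$" — such a measure exists by Standing Assumption~\ref{SA: non empty} together with the Markovian existence theory invoked in Remark~\ref{rem: non empty} (or one may simply restrict attention to a constant control, which is admissible since $\Theta$ is convex-valued and contains the point $(\bb(f,x),\bs\bs^*(f,x),\n\o\bk(f,x,\cdot)^{-1})$). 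Under $P_f$, the process $M^\phi_t := \phi(X_t) - \phi(x) - \int_0^t \mathcal{L} g(f, s, X, X_s)\, ds$ (with $g = \phi$) is a local martingale by the definition of semimartingale characteristics and Itô's formula; the boundedness in Condition~\ref{cond: main1 markov}(iv) and $\phi \in C_b^2$ make it a true martingale, so $E^{P_f}[\phi(X_t)] - \phi(x) = E^{P_f}\big[\int_0^t \mathcal{L}\phi(f, s, X, X_s)\, ds\big]$. Dividing by $t$ and letting $t \to 0$, right-continuity of $X$ together with dominated convergence gives $\liminf_{t\to 0} t^{-1}(S_t(\phi)(x) - \phi(x)) \geq \mathcal{L}\phi(f, 0, \bx, x)$, and this equals the value of the bracket defining $G(x,\phi)$ at the parameter $f$; taking the supremum over $f \in F$ yields $\liminf \geq G(x, \phi)$.

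For the upper bound, I would take an arbitrary $P \in \cC(0, \bx)$ and use the same Itô/martingale decomposition: $E^P[\phi(X_t)] - \phi(x) = E^P\big[\int_0^t \mathcal{L}\phi(f_s, s, X, X_s)\, ds\big]$ where, by the $(\llambda\otimes P)$-a.e. membership in $\on{gr}\Theta$ and a measurable selection (Filippov-type, as used elsewhere in the paper), the differential characteristics are realized by some predictable $F$-valued process $(f_s)$. Since $\mathcal{L}\phi(f_s, s, X, X_s) \leq \sup_{f\in F}\mathcal{L}\phi(f, s, X, X_s)$ pointwise, and by Condition~\ref{cond: main1 markov}(iii) together with compactness of $F$ and Berge's maximum theorem the map $y \mapsto \sup_{f\in F}\big[\langle\nabla\phi(y),\bb(f,y)\rangle + \tfrac12\tr(\nabla^2\phi(y)\bs\bs^*(f,y)) + \int(\dots)\n(dz)\big] = G(y, \phi)$ is continuous (and bounded on bounded sets, using (iv)), I get $E^P[\phi(X_t)] - \phi(x) \leq E^P\big[\int_0^t G(X_s, \phi)\, ds\big]$. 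Taking the supremum over $P \in \cC(0,\bx)$, dividing by $t$, and letting $t\to 0$, the key point is to control $E^P[\sup_{s\le t}\|X_s - x\|]$ uniformly over $P \in \cC(0, \bx)$ — this follows from a standard moment estimate for semimartingales with characteristics bounded as in Condition~\ref{cond: main1 markov}(iv) (Burkholder--Davis--Gundy for the martingale part, the linear bound for the drift, and the $\|k\|^2\wedge 1$ bound plus the tail condition \eqref{eq: tightness in cond} for the jump part), yielding $\sup_{P}E^P[\sup_{s\le t}\|X_s - x\|] \to 0$ as $t \to 0$. Continuity of $y\mapsto G(y,\phi)$ at $x$ then upgrades this to $\limsup_{t\to 0} t^{-1}(S_t(\phi)(x)-\phi(x)) \leq G(x,\phi)$, and combining with the lower bound finishes the proof.

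The main obstacle I expect is making the upper-bound argument fully rigorous: (i) producing the predictable selector $(f_s)$ realizing the characteristics of an arbitrary $P$ and checking the resulting integrand is genuinely dominated by $G(X_s, \phi)$ (this is where the convexity of $\Theta$ and the Filippov-type selection quoted earlier are needed, since the raw characteristics need not be a point of the form $(\bb(f,\cdot),\dots)$ but only a convex combination — one should pass to $G(\cdot,\phi)$, which is the support function of the convexified set and hence unaffected); and (ii) the uniform-in-$P$ moment estimate $\sup_{P\in\cC(0,\bx)} E^P[\sup_{s\le t}\|X_s-x\|]\to 0$, which requires care with the jump term. The uniform continuity of $\nabla^2\phi$ is used precisely to guarantee that $\tr(\nabla^2\phi(X_s)\bs\bs^*(f,X_s))$ converges to $\tr(\nabla^2\phi(x)\bs\bs^*(f,x))$ uniformly in $f$ as $X_s \to x$, which is what makes the interchange of limit, supremum over $f$, and supremum over $P$ legitimate; I would flag that this is the role of that hypothesis. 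The local martingale property of $M^\phi$ under each $P\in\fPas$ is classical (Jacod--Shiryaev, Itô's formula for semimartingales), and the localization to a true martingale uses only the global bounds in Condition~\ref{cond: main1 markov}(iv) and boundedness of $\phi, \nabla\phi, \nabla^2\phi$.
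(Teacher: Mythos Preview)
Your approach differs from the paper's. The paper does not argue directly; it verifies the hypotheses of an external result, \cite[Lemma~6.1]{K19}, and cites it. The verification consists of: boundedness of the coefficients (from Condition~\ref{cond: main1 markov}~(iv)); existence, for each fixed \((f,x)\), of a measure in \(\cC(0,\bx)\) whose differential characteristics are \((\bb(f,X_{\cdot-}),\bs\bs^*(f,X_{\cdot-}),\n\o\bk(f,X_{\cdot-},\cdot)^{-1})\), obtained via \cite[Corollary~IX.2.33]{JS}; uniform equicontinuity on compacts, uniformly in \(f\in F\), of \(x\mapsto\bb(f,x),\ \bs\bs^*(f,x)\) and of \(x\mapsto\int g(\bk(f,x,z))\,\n(dz)\) for suitable test functions \(g\); and the uniform decay of the symbol \(q(f,x,\xi)\) as \(\xi\to 0\) (Lemma~\ref{lem: symbol}). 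Your direct It\^o/selection argument is more self-contained and makes the probabilistic mechanism transparent; the paper's route is shorter on the page but opaque without access to \cite{K19}.

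One genuine technical gap in your sketch: the \(L^1\) estimate \(\sup_{P\in\cC(0,\bx)} E^P\big[\sup_{s\le t}\|X_s-x\|\big]\to 0\) is \emph{not} available under Condition~\ref{cond: main1 markov} alone. That condition only gives \(\int(\|\bk\|^2\wedge 1)\,\n(dz)\le C\) and the tail bound \(\sup\,\n(\|\bk\|>R)\to 0\); neither controls \(\int_{\{\|y\|>1\}}\|y\|\,(\n\o\bk^{-1})(dy)\), so the large-jump part of \(X\) need not have a finite first moment and your BDG-based argument cannot close at the \(L^1\) level. The fix is immediate: replace the \(L^1\) statement by uniform stochastic continuity, \(\sup_{P}\sup_{s\le t}P(\|X_s-x\|>\eta)\to 0\) for each \(\eta>0\) (split off the compound-Poisson part of jumps of norm \(>1\), whose total rate is bounded, so the probability of at least one such jump in \([0,t]\) is \(O(t)\); the remainder is handled by your BDG estimate). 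Since \(y\mapsto G(y,\phi)\) is bounded and continuous at \(x\), this weaker mode of convergence still yields \(\limsup_{t\to 0}t^{-1}(S_t(\phi)(x)-\phi(x))\le G(x,\phi)\).
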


We defer the proof of Proposition \ref{prop: generator} to Section \ref{sec: pf generator}. As a consequence, we are able to identify the semigroup \((S_t)_{t \in \bR_+}\) as a viscosity solution to \eqref{eq: PDE G} once we have established joint continuity in time and space. This is the content of the next theorem. Its proof can be found in Section \ref{sec: pf viscosity}.

\begin{theorem} \label{thm: viscosity}
Suppose that the Conditions \ref{cond: main1 markov} and \ref{cond: main3 markov} hold, and let  \((S_t)_{t \in \bR_+}\) be the family of operators
defined in \eqref{eq: semigroup}.
Then, for every \(\psi \in C_b(\bR^d; \bR)\),
\[
\bR_+ \times \bR^d \ni (t, x) \mapsto S_t(\psi)(x)
\]
is a viscosity solution to \eqref{eq: PDE G}.
\end{theorem}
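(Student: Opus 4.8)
\textbf{Proof plan for Theorem \ref{thm: viscosity}.}

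The plan is to combine the regularity of the value function with the generator description from Proposition~\ref{prop: generator} and the abstract viscosity characterization from Proposition~\ref{prop: weak sense}. First I would verify the continuity of \((t,x) \mapsto S_t(\psi)(x)\). Fix \(\psi \in C_b(\bR^d;\bR)\). By Remark~\ref{rem: as continuity}, the Markovian map \(\omega \mapsto \psi(\omega(t))\) is \(\cC(0,\bx)\)-a.s. continuous for every relevant initial path, and it is bounded. Under the Conditions \ref{cond: main1 markov} and \ref{cond: main3 markov}, Remark~\ref{rem: cond in markov} tells us that Conditions \ref{cond: main2} and \ref{cond: main3} hold, so the Corollary after Theorem~\ref{theo: value function lower semi} applies: the value function \(v(t,\omega) = \sup_{P \in \cC(t,\omega)} E^P[\psi(X_t)]\) — more precisely, the value function associated with the input \(\omega \mapsto \psi(\omega(t))\) — is continuous on \([0,T] \times D([0,T];\bR^d)\) with respect to the pseudometric \(\d\), for every finite \(T\). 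Restricting to constant initial paths \(\omega = \bx\) and noting that \(\d((t,\bx),(s,\by)) = |t-s| + \sup_r \|\bx(r\wedge t) - \by(r \wedge s)\|\) controls \(|t - s| + \|x - y\|\) (and conversely is controlled by it), joint continuity of \((t,x) \mapsto v(t,\bx) = \cE^x(\psi(X_t)) = S_t(\psi)(x)\) on \([0,T]\times\bR^d\) follows; since \(T\) is arbitrary we get joint continuity on \(\bR_+ \times \bR^d\). One should be slightly careful here that the input function \(\psi(X_t)\) depends on \(t\), whereas the value function in Section~\ref{sec: regularity} is stated for a fixed input \(\psi \colon \Omega \to \bR\); this is handled by the usual trick of enlarging the state space or by observing directly that the proofs of Theorems~\ref{theo: value function upper semi} and~\ref{theo: value function lower semi} go through for the stopped/evaluated functional, which is the content already used implicitly in Remark~\ref{rem: as continuity}.

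Next I would identify \((S_t)_{t\in\bR_+}\) as a sublinear Markovian semigroup on \(C_b(\bR^d;\bR)\): by Theorem~\ref{thm: feller}(ii), under Conditions~\ref{cond: main1 markov} and~\ref{cond: main3 markov} the semigroup has the \(C_b\)--Feller property, so each \(S_t\) maps \(C_b(\bR^d;\bR)\) into itself; monotonicity, preservation of constants, and the semigroup property were recorded in the propositions of Section~\ref{sec: markovian}. Thus \(\cH := C_b(\bR^d;\bR)\) is an admissible cone. To invoke Proposition~\ref{prop: weak sense} I need \(C^\infty_b(\bR^d;\bR) \subset \cD(A)\), where \(A\) is the pointwise generator of \((S_t)\) on \(\cH\). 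This is exactly where Proposition~\ref{prop: generator} enters: for every \(\phi \in C_b^2(\bR^d;\bR)\) with uniformly continuous Hessian — in particular for every \(\phi \in C^\infty_b(\bR^d;\bR)\) — we have \(\lim_{t\to 0} \tfrac{1}{t}(S_t(\phi)(x) - \phi(x)) = G(x,\phi)\) pointwise. Since, under Condition~\ref{cond: main1 markov}, the map \(x \mapsto G(x,\phi)\) is continuous (it is the supremum over the compact set \(F\) of jointly continuous functions of \((f,x)\), by the continuity in Condition~\ref{cond: main1 markov}(iii) together with dominated convergence for the integral term, and bounded uniformly by Condition~\ref{cond: main1 markov}(iv)), and \(G(\cdot,\phi)\) is bounded, we get \(G(\cdot,\phi) \in C_b(\bR^d;\bR) = \cH\). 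Hence \(\phi \in \cD(A)\) with \(A(\phi) = G(\cdot,\phi)\), so \(C^\infty_b(\bR^d;\bR) \subset \cD(A)\) and \(A = G(\cdot,\cdot)\) on this class.

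Finally, with \(\psi \in C_b(\bR^d;\bR) = \cH\), joint continuity of \((t,x)\mapsto S_t(\psi)(x)\) established, \(C^\infty_b(\bR^d;\bR)\subset\cD(A)\) verified, and \(A(\phi) = G(\cdot,\phi)\) for \(\phi\) in this test class, Proposition~\ref{prop: weak sense} yields that \((t,x)\mapsto S_t(\psi)(x)\) is a viscosity solution to \eqref{eq: PDE A}, which by the identification of \(A\) with \(G\) on the test functions \(C^\infty_b(\bR^d;\bR)\) is precisely \eqref{eq: PDE G}. (Note that the definition of viscosity sub-/supersolution only probes \(A(u(t,\cdot))\) through \(u(t,\cdot)\) being touched by a test function \(\phi \in C^\infty_b\) from above/below, so only the values of \(A\) on \(C^\infty_b(\bR^d;\bR)\), equivalently on the touching test functions, matter — this is why \(A = G\) on \(C^\infty_b\) suffices and we need not worry about \(\cD(A)\) beyond that.) This completes the proof. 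The main obstacle is the first step — transferring the pseudometric continuity of the abstract path-dependent value function to genuine joint continuity of \((t,x)\mapsto S_t(\psi)(x)\), including the subtlety that the relevant input functional \(\omega\mapsto\psi(\omega(t))\) is time-dependent and only \(\cC\)-a.s. continuous rather than continuous — while the remaining steps are essentially bookkeeping that assembles results already proved in the excerpt.
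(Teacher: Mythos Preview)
Your overall strategy---verify joint continuity of \((t,x)\mapsto S_t(\psi)(x)\), identify the pointwise generator with \(G\) on \(C^\infty_b\), and then apply Proposition~\ref{prop: weak sense}---is the right one, and the second and third steps match the paper's Lemma~\ref{lem: viscosity} essentially verbatim (including the observation that \(x\mapsto G(x,\phi)\) is in \(C_b\), which is the paper's Lemma~\ref{lem: G cont}).

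The gap is in the joint continuity step. Your identification \(v(t,\bx)=\cE^x(\psi(X_t))=S_t(\psi)(x)\) is not correct as written: by definition \(\cE^x(\,\cdot\,)=\sup_{P\in\cC(0,\bx)}E^P[\,\cdot\,]\), whereas \(v(t,\bx)=\sup_{P\in\cC(t,\bx)}E^P[\,\cdot\,]\), and for \(P\in\cC(t,\bx)\) one has \(P(X_t=x)=1\), so with input \(\psi(X_t)\) you get \(v(t,\bx)=\psi(x)\), not \(S_t(\psi)(x)\). The path-dependent regularity results of Section~\ref{sec: regularity} move the \emph{starting point} \((t,\omega)\), not the evaluation time inside the input functional; as stated they give you continuity of \(x\mapsto S_t(\psi)(x)\) for each fixed \(t\) (this is exactly how Theorem~\ref{thm: feller} is proved), but not continuity in \(t\). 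Your proposed patches (``enlarge the state space'' or ``the proofs go through'') do not close this. A workable salvage is to fix \(T\), take the \emph{fixed} input \(\psi(X_T)\), and use time-homogeneity of the Markovian set \(\Theta\) to show \(v(t,\bx)=S_{T-t}(\psi)(x)\) for \(t\le T\); then continuity of \((t,\omega)\mapsto v(t,\omega)\) on \([0,T]\times D([0,T];\bR^d)\) yields joint continuity of \((s,x)\mapsto S_s(\psi)(x)\) on \([0,T]\times\bR^d\) after the substitution \(s=T-t\). This extra step is not hard but it is not in your argument.

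The paper avoids the issue entirely by decoupling the two variables: continuity in \(x\) comes from the \(C_b\)--Feller property (Theorem~\ref{thm: feller}), while continuity in \(t\), \emph{locally uniformly in \(x\)}, is established separately as Proposition~\ref{prop: continuity in time}, which invokes an external result \cite[Theorem~5.3(ii)]{K19} based on the symbol estimate of Lemma~\ref{lem: symbol}. A single triangle inequality then gives joint continuity. This route is shorter and sidesteps the time-parameter bookkeeping, at the cost of importing a result from~\cite{K19} rather than deriving everything from the regularity of Section~\ref{sec: regularity}.
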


In the following we apply a uniqueness theorem for Hamilton–Jacobi–Bellman PDEs and show that, under global Lipschitz and boundedness
conditions, the semigroup \((S_t)_{t \in \bR_+}\) is the unique viscosity solution to \eqref{eq: PDE G}.

\begin{condition} \label{cond: global lipschitz}
There is a constant \(C > 0\) and a Borel function \(\gamma \colon L \to [0, \infty]\) such that \(\int \gamma (z) \vee \gamma^2(z) \, \n (dz) < \infty\) and 
\begin{align*}
\|\bb(f,x)- \bb(f,y) \| + \| \bs(f,x)-\bs(f,y) \| &\leq C  \|x -y \|,  \\
\| \bk (f, x, z) - \bk (f, y, z)\| &\leq  \gamma(z)  \|x - y\|, \\
\| \bk (f, x, z)\| & \leq  \gamma(z)
\end{align*}
for all \(f \in F\), \(x,y \in \bR^d\) and \(z \in L\).
\end{condition}

\begin{theorem} \label{thm: viscosity uniqueness}
Suppose that the Conditions \ref{cond: main1 markov} and \ref{cond: global lipschitz} hold, and let  \((S_t)_{t \in \bR_+}\) be as in~\eqref{eq: semigroup}.
Then, for every \(\psi \in C_b(\bR^d; \bR)\),
\[
\bR_+ \times \bR^d \ni (t, x) \mapsto S_t(\psi)(x)
\]
is the unique bounded viscosity solution to \eqref{eq: PDE G}.
\end{theorem}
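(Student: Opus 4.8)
The plan is to combine the existence statement already obtained in Theorem \ref{thm: viscosity} with a comparison principle for the PDE \eqref{eq: PDE G}, which under the global Lipschitz and boundedness Condition \ref{cond: global lipschitz} is available from \cite{hol16}. First I would observe that Condition \ref{cond: global lipschitz} is strictly stronger than Condition \ref{cond: main3 markov} (the bound \(\|\bk(f,x,z)\|\le\gamma(z)\) with \(\int(\gamma\vee\gamma^2)\,\n(dz)<\infty\) plays the role of \(\beta\), and the Lipschitz estimates hold globally in \(x\), hence uniformly over bounded sets with a uniform constant), so Theorem \ref{thm: viscosity} applies and \((t,x)\mapsto S_t(\psi)(x)\) is a viscosity solution to \eqref{eq: PDE G} for every \(\psi\in C_b(\bR^d;\bR)\). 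It is moreover bounded, since \(S_t\) preserves constants and is monotone, so \(\inf\psi\le S_t(\psi)(x)\le\sup\psi\) for all \((t,x)\). This gives the existence part of the assertion.

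For uniqueness I would invoke the comparison principle of \cite{hol16} for Hamilton--Jacobi--Bellman equations of the form \eqref{eq: PDE G}: under Condition \ref{cond: global lipschitz} the nonlinearity \(G\) satisfies the structural hypotheses (degenerate ellipticity, the Lipschitz/Lévy-type integrability conditions on the coefficients, and the monotonicity in the nonlocal term) required there, so that any bounded upper semicontinuous viscosity subsolution \(u\) and any bounded lower semicontinuous viscosity supersolution \(w\) with \(u(0,\cdot)\le w(0,\cdot)\) satisfy \(u\le w\) on \(\bR_+\times\bR^d\). Applying this with one of the two being the semigroup solution \((t,x)\mapsto S_t(\psi)(x)\) and the other an arbitrary bounded viscosity solution \(\tilde u\) to \eqref{eq: PDE G} with the same initial datum \(\psi\), and then reversing the roles, yields both \(\tilde u\le S_\cdot(\psi)\) and \(S_\cdot(\psi)\le\tilde u\), hence equality. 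I would spell out the verification that \(G\) meets the precise hypotheses of the cited comparison result — checking the modulus-of-continuity estimate for \(G(x,\phi)-G(y,\phi)\) in terms of \(\|x-y\|\) and the bound on the jump term that comes from \(\int(\gamma\vee\gamma^2)\,\n(dz)<\infty\) — since this is where the global (rather than merely local) nature of Condition \ref{cond: global lipschitz} is genuinely used.

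The main obstacle is precisely this last verification: the comparison principle from \cite{hol16} is stated for a nonlocal HJB equation whose coefficients are parameterized somewhat differently (a state-dependent Lévy triplet \((b,a,K)\) rather than a reference measure \(\n\) and a density coefficient \(\bk\)), so I must check that our \(G\), rewritten via \(K(x,\cdot)=\n\circ\bk(f,x,\cdot)^{-1}\), satisfies the quantitative assumptions imposed there — in particular the Lipschitz dependence of the triplet on \(x\) in the appropriate (Wasserstein-type) sense on the kernel, which is exactly what the coefficient-level bounds \(\|\bk(f,x,z)-\bk(f,y,z)\|\le\gamma(z)\|x-y\|\) and \(\|\bk(f,x,z)\|\le\gamma(z)\) are designed to deliver. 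Once the translation between the two parameterizations is made explicit, the argument closes; the remaining bookkeeping (boundedness and semicontinuity of \(S_\cdot(\psi)\), matching initial conditions) is routine.
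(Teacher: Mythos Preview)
Your proposal is correct and follows essentially the same route as the paper: the paper invokes Theorem~\ref{thm: viscosity} for existence and then a comparison principle (packaged as Theorem~\ref{theo: comparison} in the Appendix) that is itself deduced from \cite[Corollary~2.23]{hol16} via exactly the translation between the reference-measure parameterization and the L\'evy-triplet parameterization that you flag as the main obstacle. Your identification of this verification --- splitting the jump operator according to the size of \(\gamma\) and checking the Lipschitz estimates needed for \cite{hol16} --- is precisely what the paper carries out in the proof of Theorem~\ref{theo: comparison}.
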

The proof of Theorem \ref{thm: viscosity uniqueness} is relegated to Section \ref{sec: pf viscosity}. 
We end this section with a second uniqueness result. More precisely, we show that \((S_t)_{t \in \bR_+}\) is determined by its generator on \(C_c^\infty(\bR^d; \bR)\), the space of compactly supported smooth functions, in the following sense:

\begin{corollary} \label{cor: characterization}
Suppose that the Conditions \ref{cond: main1 markov} and \ref{cond: global lipschitz} hold. Then,
\((S_t)_{t \in \bR_+}\) as defined in~\eqref{eq: semigroup} is the unique sublinear Markovian semigroup on \(C_b(\bR^d; \bR)\) with the following properties:
\begin{enumerate}
    \item[\textup{(i)}] for \(\psi \in C_b(\bR^d; \bR)\), the map \((t,x) \mapsto S_t(\psi)(x)\) is continuous;
    \item[\textup{(ii)}] for all \(x \in \bR^d\) and \(\phi \in C_c^\infty(\bR^d; \bR)\),
    \[
    \lim_{t \to 0} \frac{S_t (\phi)(x) - \phi (x)}{t} = G(x, \phi).
    \]
\end{enumerate}
\end{corollary}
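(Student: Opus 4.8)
\emph{Sketch of proof.} First I would check that $(S_t)_{t\in\bR_+}$ itself belongs to the asserted class, which is a matter of assembling earlier results. Since Condition~\ref{cond: global lipschitz} implies Condition~\ref{cond: main3 markov} (take $\beta=\gamma_N:=\gamma$), Theorem~\ref{thm: feller}(ii) shows that $(S_t)_{t\in\bR_+}$ is a sublinear Markovian semigroup on $C_b(\bR^d;\bR)$; Theorem~\ref{thm: viscosity} shows that $(t,x)\mapsto S_t(\psi)(x)$ is a viscosity solution to~\eqref{eq: PDE G}, hence continuous, which is property~(i); and Proposition~\ref{prop: generator} gives property~(ii), because every $\phi\in C^\infty_c(\bR^d;\bR)$ lies in $C^2_b(\bR^d;\bR)$ and has uniformly continuous Hessian. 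It thus remains to prove the uniqueness assertion.

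So let $(T_t)_{t\in\bR_+}$ be an arbitrary sublinear Markovian semigroup on $C_b(\bR^d;\bR)$ with properties~(i) and~(ii), and let $A$ denote its pointwise generator. Fix $\psi\in C_b(\bR^d;\bR)$ and set $u(t,x):=T_t(\psi)(x)$. Sublinearity and constant preservation of $T_t$ give $\|u(t,\cdot)\|_\infty\le\|\psi\|_\infty$, and property~(i) together with the semigroup property gives that $u$ is continuous with $u(0,\cdot)=\psi$. The plan is to show that $u$ is a bounded viscosity solution to~\eqref{eq: PDE G}; since Theorem~\ref{thm: viscosity uniqueness} identifies $(t,x)\mapsto S_t(\psi)(x)$ as the \emph{unique} bounded viscosity solution to~\eqref{eq: PDE G}, this forces $u(t,\cdot)=S_t(\psi)$ for all $t\ge0$, and as $\psi$ and $t$ are arbitrary, $T_t=S_t$ on $C_b(\bR^d;\bR)$ for every $t\ge0$.

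The key step, and the only one requiring genuine work, is to upgrade property~(ii) from $C^\infty_c(\bR^d;\bR)$ to $C^\infty_b(\bR^d;\bR)$, i.e.\ to show $C^\infty_b(\bR^d;\bR)\subset\cD(A)$ with $A\phi=G(\,\cdot\,,\phi)$. I would fix $\phi\in C^\infty_b(\bR^d;\bR)$ and $x_0\in\bR^d$, write $\overline{B}_n:=\{x\in\bR^d:\|x-x_0\|\le n\}$, and pick $\chi_n\in C^\infty_c(\bR^d;\bR)$ with $\1_{\overline{B}_n}\le\chi_n\le1$. Then $\phi_n:=\phi\chi_n\in C^\infty_c(\bR^d;\bR)$ agrees with $\phi$, together with all its derivatives, on $\overline{B}_n$, and $|\phi-\phi_n|\le\|\phi\|_\infty(1-\chi_n)$. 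Starting from $\phi_n-\|\phi\|_\infty(1-\chi_n)\le\phi\le\phi_n+\|\phi\|_\infty(1-\chi_n)$, applying the monotone operator $T_h$, using its sublinearity and the identity $T_h(1-\chi_n)-(1-\chi_n)=T_h(-\chi_n)-(-\chi_n)$ (constant preservation), and invoking property~(ii) for $\phi_n\in C^\infty_c$ and for $-\chi_n\in C^\infty_c$, one finds after dividing by $h$ and sending $h\downarrow0$ that both $\liminf_{h\downarrow0}$ and $\limsup_{h\downarrow0}$ of $\bigl(T_h(\phi)(x_0)-\phi(x_0)\bigr)/h$ lie within distance $\|\phi\|_\infty\,G(x_0,-\chi_n)$ of $G(x_0,\phi_n)$. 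Letting $n\to\infty$: Condition~\ref{cond: global lipschitz} gives $\|\bk(f,x_0,z)\|\le\gamma(z)$ with $\int(\gamma\vee\gamma^2)\,\n<\infty$, hence $\n(\gamma>n)\to0$; since $\nabla\chi_n(x_0)=0$ and $\nabla^2\chi_n(x_0)=0$, this yields $0\le G(x_0,-\chi_n)=\sup_{f\in F}\int\bigl[1-\chi_n(x_0+\bk(f,x_0,z))\bigr]\,\n(dz)\le\n(\gamma>n)\to0$, and likewise $|G(x_0,\phi_n)-G(x_0,\phi)|\le\|\phi\|_\infty\,\n(\gamma>n)\to0$, since $\phi_n$ and $\phi$ share all derivatives at $x_0$ and differ only on $\{\,\|\bk(f,x_0,\cdot)\|>n\,\}\subseteq\{\gamma>n\}$. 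Thus $\lim_{h\downarrow0}\bigl(T_h(\phi)(x_0)-\phi(x_0)\bigr)/h=G(x_0,\phi)$, and $x\mapsto G(x,\phi)$ lies in $C_b(\bR^d;\bR)$ (boundedness from Condition~\ref{cond: main1 markov}(iv) via a second-order Taylor estimate for the non-local term, continuity from Berge's maximum theorem using Condition~\ref{cond: main1 markov}(i) and~(iii)), so $C^\infty_b(\bR^d;\bR)\subset\cD(A)$ with $A\phi=G(\,\cdot\,,\phi)$.

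Once this is established, Proposition~\ref{prop: weak sense} applied to $(T_t)_{t\in\bR_+}$ on $\cH=C_b(\bR^d;\bR)$ with input $\psi$ and continuity property~(i) shows that $u=T_\cdot(\psi)$ is a viscosity solution to~\eqref{eq: PDE A} for the generator $A$; since $A$ coincides with $G(\,\cdot\,,\cdot\,)$ on $C^\infty_b(\bR^d;\bR)$ and the viscosity formulation only evaluates the operator on smooth test functions, this is precisely the statement that $u$ is a viscosity solution to~\eqref{eq: PDE G}, and we conclude via Theorem~\ref{thm: viscosity uniqueness} as explained above. The hardest part will be the passage from $C^\infty_c$ to $C^\infty_b$ in property~(ii): this is exactly where the \emph{global} domination $\|\bk\|\le\gamma$ with $\int(\gamma\vee\gamma^2)\,\n<\infty$ from Condition~\ref{cond: global lipschitz} is indispensable, as it makes the jump mass escaping any fixed ball — and hence the error from truncating the test function outside that ball — negligible; without such a control a semigroup carrying the prescribed generator on $C^\infty_c$ need not carry it on $C^\infty_b$. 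A minor accompanying point is to verify that the abstract notion of viscosity solution to~\eqref{eq: PDE A} for $A$ matches that for~\eqref{eq: PDE G}, which holds because the two operators agree on the admissible test functions.
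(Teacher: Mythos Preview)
Your proof is correct and follows the same overall strategy as the paper: verify that \((S_t)\) has the stated properties, then for an arbitrary competitor \((T_t)\) extend the generator identity from \(C_c^\infty\) to \(C_b^\infty\), invoke Proposition~\ref{prop: weak sense} to make \((t,x)\mapsto T_t(\psi)(x)\) a viscosity solution of \eqref{eq: PDE G}, and conclude via Theorem~\ref{thm: viscosity uniqueness}. The paper packages the middle step as Lemma~\ref{lem: viscosity}.

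The one substantive difference is how the extension \(C_c^\infty \to C_b^\infty\) is carried out. The paper simply cites \cite[Corollary~4.7]{K21}, which performs this passage under Condition~\ref{cond: main1 markov}~(iv) alone. You instead give a self-contained cutoff argument, sandwiching \(\phi\) between \(\phi_n \pm \|\phi\|_\infty(1-\chi_n)\) and controlling the error by \(G(x_0,-\chi_n)\). Your argument is elementary and transparent, and the sandwich estimates via sublinearity, monotonicity and constant preservation are all sound. One correction, though: your claim that Condition~\ref{cond: global lipschitz} is ``indispensable'' here is too strong. The only place it enters is through the bound \(\n(\|\bk(f,x_0,\cdot)\|>n)\to 0\), and this already follows from the tightness clause in Condition~\ref{cond: main1 markov}~(iv), uniformly in \((f,x)\). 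So your cutoff argument in fact works under Condition~\ref{cond: main1 markov} alone, matching the scope of the paper's Lemma~\ref{lem: viscosity}; the global Lipschitz hypothesis is needed only later, for the comparison principle behind Theorem~\ref{thm: viscosity uniqueness}.
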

\begin{proof}
Theorem \ref{thm: feller} shows that \((S_t)_{t \in \bR_+}\) has the \(C_b\)--Feller property. Further, 
Theorem \ref{thm: viscosity} implies (i), while Proposition \ref{prop: generator} shows that (ii) holds for \((S_t)_{t \in \bR_+}\).
Next, let \((T_t)_{t \in \bR_+}\) be a sublinear Markovian semigroup on \(C_b(\bR^d; \bR)\) with the properties described in (i) and (ii).
Take an arbitrary  \(\psi \in C_b(\bR^d; \bR)\).
Using Lemma \ref{lem: viscosity} below, it follows that \((t,x) \mapsto T_t(\psi)(x)\) is a (bounded) viscosity solution to \eqref{eq: PDE G}.
Hence, Theorem~\ref{thm: viscosity uniqueness} implies
\[
S_t(\psi)(x) = T_t(\psi)(x), \quad (t,x) \in \bR_+ \times \bR^d,
\]
as desired.
\end{proof}

To the best of our knowledge, the first theorem that uses viscosity methods to characterize sublinear semigroups via versions of the pointwise generator appeared in \cite{lions_nisio}. A general characterization of sublinear convolution semigroups (that correspond to nonlinear L\'evy processes) was proved in \cite{K21}. Corollary~\ref{cor: characterization} extends this result to more general semigroups build from path-dependent uncertainty sets.

	An analytic approach not relying on viscosity methods was recently proposed in \cite{blessing22}. 
	More specifically, the paper provides a unique characterization of sublinear semigroups (with certain properties) in terms of so-called \(\Gamma\)-generators.	It is an interesting open problem to investigate how this theory relates to Corollary~\ref{cor: characterization}.
	We leave this question open for future investigations.

\section{Example: Random \(G\)-Double Exponential Process} \label{sec: ex}
In this section, we discuss an extension of the random \(G\)-Brownian motion setting from~\cite{NVH} that includes jumps.
Consider a one-dimensional L\'evy process with L\'evy--Khinchine triple \((b, a, F)\), where 
\begin{align*}
F_\lambda (dx) = \lambda e^{- |x|} dx, \quad \lambda > 0.
\end{align*}
This is a special case of a so-called double exponential L\'evy process, which is related to Kou's model \cite{Kou} in finance. In the following, we introduce path-dependent uncertainty to its parameters \(b, a\) and \(\lambda\), where the model uncertainty is governed by random intervals.

First of all, let us fix \(L := \bR\). We model the interval boundaries via predictable functions
		\[
		\underline{b}, \overline{b} \colon \of 0, \infty\of \, \to \bR, \quad 
		\underline{a}, \overline{a} \colon \of 0, \infty\of \, \to \bR_+, \quad
  \underline{\lambda}, \overline{\lambda} \colon \of 0, \infty\of\,\to \bR_+, 
		\]
		such that
		\[
		\underline{b}_t(\omega) \leq \overline{b}_t(\omega), \quad 
		\underline{a}_t(\omega) \leq \overline{a}_t(\omega), 
  \quad
 \underline{\lambda}_t (\omega) \leq \overline{\lambda}_t (\omega),
  \quad 
		(t, \omega) \in \of 0, \infty \of.
		\]
Our goal is to describe the infinitesimal behaviour of the characteristics by the set-valued map 
\begin{align} \label{eq: theta example}
    \Theta (t, \omega) := [ \underline{b}_t (\omega), \overline{b}_t (\omega)] \times [ \underline{a}_t (\omega), \overline{a}_t (\omega)] \times \{ F_\lambda \colon \lambda \in [\underline{\lambda}_t (\omega), \overline{\lambda}_t(\omega)] \}.
\end{align}
Notice already that \(\Theta\) is convex-valued. 
Next, we explain how to incorporate this setting into our framework. 
We presume that there are constants \(0 < \lambda_* \leq \lambda^* < \infty\) such that 
\[
\lambda_* \leq \underline{\lambda}_t (\omega) \leq \overline{\lambda}_t (\omega) \leq \lambda^*, \quad \forall \, (t, \omega) \in \of 0, \infty\of.
\]
Then, we take \(F := [0, 1] \times [0, 1] \times [0,1]\) and set, for \(((f_1, f_2, f_3),t,\omega, z) \in F \times \of 0, \infty \of  \, \times\, \bR\),
\begin{align*}
		b ((f_1, f_2, f_3), t, \omega) &:= \underline{b}_t (\omega) + f_1 \cdot (\overline{b}_t (\omega) - \underline{b}_t (\omega)), \\
		a ((f_1, f_2, f_3), t, \omega) &:= \underline{a}_t (\omega) + f_2 \cdot (\overline{a}_t (\omega) - \underline{a}_t (\omega)),
        \\ k ((f_1, f_2, f_3), t, \omega, z) &:=
        \begin{cases} - \log (\lambda^*) + \log (\lambda ( (f_1, f_2, f_3), t, \omega)) + z, & z > 0, \\ 0, &  z = 0, \\
       \log (\lambda^*) - \log ( \lambda ( (f_1, f_2, f_3), t, \omega) ) + z, & z < 0,
        \end{cases}
\end{align*}
where
\begin{align*}
    \lambda ( (f_1, f_2, f_3) , t, \omega) := \underline{\lambda}_t (\omega) + f_3 \cdot (\overline{\lambda}_t (\omega) - \underline{\lambda}_t(\omega)).
\end{align*}
Further, we define 
\[
\n (dz) := F_{\lambda^*} (dz) = \lambda^* e^{- |z|} dz.
\]
By a short computation (based on a change of variable), we obtain that 
\begin{align} \label{eq: push example}
\int \1_{G \backslash \{0\}} ( k (f, t, \omega, z) ) \n (dz) = \int_{G} \lambda (f, t, \omega) e^{- |x|} dx, \quad G \in \mathcal{B}(\bR).
\end{align}
This shows that
\[
\Theta (t, \omega) = \big\{ (b (f, t, \omega), a (f, t, \omega), \n \o k (f, t, \omega, \cdot\,)^{-1}) \colon f \in F \big\}.
\]
In other words, we reformulated \eqref{eq: theta example} within the setting from the previous section.

\smallskip
In the following, we discuss the Conditions~\ref{cond: main1}, \ref{cond: main2} and \ref{cond: main3} for this example.
We start with Condition \ref{cond: main1}.
It is clear that \(F\) is a compact metrizable space. Further, by virtue of \eqref{eq: theta example}, \(\Theta\) is convex-valued. 
It follows from \eqref{eq: push example} and the structure of \(b, a\) and \(\lambda\), that Condition~\ref{cond: main1}~(iii) holds. 
In case \(\underline{b}, \overline{b}\) and \(\overline{a}\) are bounded (which we assume in the following), we also get that Condition~\ref{cond: main1}~(iv) holds. Let us elaborate on the jump part in more detail.
Using \eqref{eq: push example} and \(\lambda \leq \lambda^*\), we obtain that
\begin{align*}
    \int (|k (f, t, \omega, z)|^2 \wedge 1)\, \n (dz) &= \int (|x|^2 \wedge 1) \lambda (f, t, \omega) e^{- |x|} dx 
    \leq \lambda^* \int e^{- |x|} dx < \infty. 
\end{align*}
Further, notice that
\begin{align*}
    \n ( \{ x \colon | k (f, t, \omega, x) | > R \} ) &= \int \1_{\{|x| > R\}} \lambda (f, t, \omega) e^{- |x|} dx \leq \n (\{x \colon |x| > R\}) \to 0,
\end{align*}
as \(R \to \infty\), which implies \eqref{eq: tightness in cond}.

Next, we investigate Condition \ref{cond: main2}. We say that a function \(\ell \colon \of 0, \infty \of\, \to \bR\) is \(\llambda\)-a.e. Skorokhod \(J_1\) continuous, if \(\omega^n \to \omega^0\) in the Skorokhod \(J_1\) topology implies that \(\ell (t, \omega^n) \to \ell (t, \omega^0)\) for \(\llambda\)-a.a. \(t \in \bR_+\).
From now on, we suppose that the coefficients \(\underline{b}, \overline{b}, \underline{a}, \overline{a}, \underline{\lambda}\) and \(\overline{\lambda}\) are \(\llambda\)-a.e. Skorokhod \(J_1\) continuous. This ensures that Condition~\ref{cond: main2} holds. To see this, take \(g \in C^2_b (\bR; \bR)\) and a sequence \((\omega^n)_{n = 0}^\infty \subset \Omega\) such that \(\omega^n \to \omega^0\) in the Skorokhod \(J_1\) topology. Then, for \(\llambda\)-a.a. \(t \in \bR_+\), the map
\[
(f, \omega) \mapsto \big| \mathcal{L} g (f, t, \omega, \omega (t)) - \mathcal{L} g (f, t, \omega^0 , \omega^0 (t)) \big|
\]
is continuous from \(F \times \{ \omega^n \colon n \in \mathbb{Z}_+\}\) into \(\bR_+\), where \( \{\omega^n \colon n \in \mathbb{Z}_+ \}\) is endowed with the Skorokhod \(J_1\) topology. Consequently, \eqref{eq: conv cond 2.8} follows from Berge's maximum theorem and the dominated convergence theorem. The assumption of \(\llambda\)-a.e. Skorokhod \(J_1\) continuity is rather mild. For instance, in Markovian cases it is implied by usual continuity assumptions, i.e., \((t, \omega) \mapsto \ell (\omega (t-))\) is \(\llambda\)-a.e. Skorokhod continuous once \(\ell \colon \bR \to \bR\) is continuous (cf. Remark \ref{rem: cond in markov} and \cite[VI.2.3]{JS}).

Finally, we examine Condition \ref{cond: main3}. We assume that the boundary functions are locally Lipschitz continuous, i.e., 
for every \(N > 0\) there exists a constant \(C = C_N > 0\) such that 
\begin{align*}
		|\underline{b}_t (\omega) - \underline{b}_t (\alpha)| + |\overline{b}_t (\omega) - \overline{b}_t (\alpha)| &\leq C  \sup_{s \in [0, t]} | \omega (s) - \alpha (s) |, \\
  | \underline{a}_t (\omega) - \underline{a}_t (\alpha)| + | \overline{a}_t (\omega) - \overline{a}_t (\alpha)| &\leq C  \sup_{s \in [0, t]} | \omega (s) - \alpha (s) |, \\
  | \underline{\lambda}_t (\omega) - \underline{\lambda}_t (\alpha)| + | \overline{\lambda}_t (\omega) - \overline{\lambda}_t (\alpha)| &\leq C  \sup_{s \in [0, t]} | \omega (s) - \alpha (s) |,
\end{align*}
for all \(\omega, \alpha \in \Omega \colon \sup_{s \in [0, N]} |\omega (s)| \vee |\alpha (s)| \leq N\) and \(t \in [0, N]\).
Then, Condition \ref{cond: main3} holds for instance in case of uniform ellipticity \(\underline{a} \geq 1/C\), 
or if the jump part is the sole source of randomness, i.e., \(\overline{a} = 0\).
To see this, note that, for \(z \neq 0\),
\begin{align*}
| k(f,t,\omega, z) -k(f,t, \alpha, z) | & = |\log(\lambda(f,t,\omega)) - \log(\lambda(f,t,\alpha)) |.
\end{align*}
This, together with the lower bound \(0 < \lambda_* \leq \lambda\), implies, for every \(N > 0\), the existence of a constant \(C_N > 0\) such that 
\begin{align*}
| k(f,t,\omega, z) -k(f,t, \alpha, z) | & \leq C_N  \sup_{s \in [0, t]} | \omega (s) - \alpha (s) |,
\end{align*}
for all \(\omega, \alpha \in \Omega \colon \sup_{s \in [0, N]} |\omega (s)| \vee |\alpha (s)| \leq N\) and \(t \in [0, N]\). In particular, we may choose \(\gamma = \gamma_N := C_N\).
Notice that \(u \mapsto \log(u)\) is uniformly bounded on \([\lambda_*, \lambda^*]\). Hence, there exists a constant \(K > 0\) such that 
\[
|k(f,t,\omega, z) | \leq K (1 + |z|) =: \beta (z), \quad (f,t,\omega, z) \in F \times \of 0, \infty\of \hspace{0.05cm}\times \hspace{0.05cm} \bR.
\]
As \( \int (1 + |z| + |z|^2)\, \n(dz) < \infty\), we conclude that Condition \ref{cond: main3} holds.

\section{The Relation to Control Rules} \label{sec: control}
Two classical concepts in stochastic optimal control theory are weak and relaxed control rules as, for instance, discussed in \cite{nicole1987compactification,EKNJ88, ElKa15}. In this section, we prove that our value function~\(v\) coincides with its weak and relaxed counterparts under Condition~\ref{cond: main1}. The control frameworks are particularly well-suited for weak convergence techniques.  As done for Markovian frameworks in \cite{nicole1987compactification}, we use such to establish regularity of the weak and relaxed value functions \(v^W\) and \(v^R\). These properties propagate directly to our value function \(v\). In this section we presume that \(F\) is a compact metrizable space.
\subsection{Weak and Relaxed Control Rules and their Value Functions}
 		Let \(\m (\bR_+ \times F)\) be the set of all Radon measures on \(\bR_+ \times F\) and define \(\m\) to be its subset of all measures in \(\m (\bR_+ \times F)\) whose projections on \(\bR_+\) coincide with the Lebesgue measure. 
   We endow \(\m\) with the vague topology, which turns it into a compact metrizable space (\cite[Theorem 2.2]{EKNJ88}). At this point, we remark that this topology coincides with the weak-strong topology (\cite{SPS_1981__15__529_0}) that is tested with compactly supported functions that are only continuous in the \(F\)-variable (instead of being jointly continuous), cf. \cite[Corollary~2.9]{SPS_1981__15__529_0}.
   Furthermore, we define 
		\[
		\m_0 := \big\{ m \in \m \colon m (ds, dq) = \delta_{\phi_s} (dq)ds \text{ for some Borel map } \phi \colon \bR_+ \to F \big\}.
		\]
		The identity map on \(\m\) is denoted by \(M\).
		We define the \(\sigma\)-field \[\M := \sigma (M_t (\phi); t \in \bR_+, \phi \in C_{c} (\bR_+ \times F; \bR)),\] where
		\[
		M_t (\phi) := \int_0^t \int \phi (s, f) M(ds, df).
		\]
 For \(g \in C^2_b (\bR^d; \bR)\), we define
		\[
		C (g) := g (X_{\cdot}) - \int_{0}^{\cdot} \int \mathcal{L} g (f, s, X, X_s) M (ds , df),
		\]
        where \(X\) denotes (with slight abuse of notation) the coordinate process on \(\Omega \times \m\), i.e., \(X (\omega, m) = \omega\) for \((\omega, m) \in \Omega \times \m\).
		A \emph{relaxed control rule} with initial value \((t, \omega) \in \of 0, \infty\of\) is a probability measure \(P\) on the product space \((\Omega \times \m, \mathcal{F} \otimes \M)\) such that \(P (X = \omega \text{ on } [0, t]) = 1\) and such that the processes \((C_s (g))_{s \geq t}\), with \(g \in C^2_b (\bR^d; \bR)\), are local \(P\)-martingales for the filtration
		\[
		\cG_s := \sigma (X_r, M_r (\phi); r \leq s, \phi \in C_c (\bR_+ \times F; \bR)), \quad s \in \bR_+.
		\]
		Finally, we define 
		\begin{align*}
		\mathcal{P}^R (t, \omega) &:= \big\{ \text{all relaxed control rules with initial value } (t, \omega) \big \}, \\
		\mathcal{P}^W (t, \omega) &:= \big\{ P \in \mathcal{P}^R (t, \omega) \colon P (M \in \m_0) = 1 \big\}, % \qquad \mathcal{P}^w (t, \omega) \triangleq \big\{ \text{all weak control rules} \big\}.
		\end{align*}
  and also the corresponding (relaxed and weak) value functions:
  \[
  v^R (t, \omega) := \sup_{P \in \mathcal{P}^R (t, \omega)} E^P \big[ \psi (X) \big], \qquad v^W (t, \omega) := \sup_{P \in \mathcal{P}^W (t, \omega)} E^P \big[ \psi (X) \big].
  \]
  Here, \(\psi\) is the same upper semianalytic function as in \eqref{eq: def vf}. The following theorem shows that, under certain conditions, our value function coincides with its relaxed and weak counterparts. It should be compared to \cite[Theorems~2.10, 8.7]{nicole1987compactification} where a related result was established for a jump diffusion setting. 
  
\begin{theorem} \label{theo: connection to control rules}
Assume that Condition \ref{cond: main1} holds. Then, for every \((t, \omega) \in \of 0, \infty\of\),
\[
\mathcal{C} (t, \omega) = \big\{ Q \circ X^{-1} \colon Q \in \mathcal{P}^R (t, \omega) \big\} = \big\{ Q \circ X^{-1} \colon Q \in \mathcal{P}^W (t, \omega) \big\}.
\]
In particular, \(
    v = v^R = v^W.
\)
\end{theorem}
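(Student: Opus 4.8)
The plan is to establish the two set equalities
\[
\cC(t,\omega) = \{ Q \circ X^{-1} \colon Q \in \cP^R(t,\omega)\} = \{Q \circ X^{-1} \colon Q \in \cP^W(t,\omega)\}
\]
by a chain of inclusions; the identity $v = v^R = v^W$ then follows immediately, since $E^Q[\psi(X)]$ depends on $Q$ only through the law $Q \circ X^{-1}$. Because $\cP^W(t,\omega) \subseteq \cP^R(t,\omega)$, it suffices to prove the three inclusions
\[
\{Q \circ X^{-1} \colon Q \in \cP^R(t,\omega)\} \subseteq \cC(t,\omega) \subseteq \{Q \circ X^{-1} \colon Q \in \cP^W(t,\omega)\} \subseteq \{Q \circ X^{-1} \colon Q \in \cP^R(t,\omega)\},
\]
the last of which is trivial. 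I would fix $(t,\omega)$ throughout and, as in \cite{nicole1987compactification}, work via the (relaxed) martingale problem associated with the operator $\mathcal{L}g$; the coordinate process $X$ on $\Omega \times \m$ plays the role of the state, and $M$ carries the (relaxed) control information.

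\emph{Step 1: $\cP^R$-rules push forward into $\cC$.} Let $Q \in \cP^R(t,\omega)$ and set $P := Q \circ X^{-1}$. The process $X_{\cdot+t}$ is a semimartingale under $Q$ (hence under $P$) because $(C_s(g))_{s \ge t}$ is a local martingale for all $g \in C^2_b$, and standard semimartingale-problem theory (as in \cite[Ch.~II, III]{JS}) identifies its characteristics: writing $\bar{m}$ for the disintegration $m(ds,df) = m_s(df)\,ds$, one reads off
\[
dB^P_{\cdot+t}/d\llambda = \int b(f,\cdot+t,X)\,m_{\cdot+t}(df), \quad dC^P_{\cdot+t}/d\llambda = \int \sigma\sigma^*(f,\cdot+t,X)\,m_{\cdot+t}(df),
\]
and analogously $\nu^P_{\cdot+t}/d\llambda = \int \n\o k(f,\cdot+t,X,\cdot)^{-1}\,m_{\cdot+t}(df)$, all $(\llambda \otimes Q)$-a.e. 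In particular the characteristics are absolutely continuous, so $P \in \fPas(t)$, and since each such integral is a convex combination (a barycentre of a probability measure on $F$) of points of the set $\{(b,\sigma\sigma^*,\n\o k^{-1})(f,\cdot+t,X)\colon f\in F\}$, convexity of $\Theta$ (Condition \ref{cond: main1}(ii)) yields that the differential characteristics lie in $\Theta(\cdot+t,X)$ $(\llambda\otimes P)$-a.e.; together with $P(X = \omega$ on $[0,t]) = 1$ this gives $P \in \cC(t,\omega)$. The only technical care needed here is measurability of the barycentre map and the disintegration of $m$, which is routine.

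\emph{Step 2: each $P \in \cC$ is the $X$-marginal of a \emph{weak} control rule.} This is the substantive direction. Given $P \in \cC(t,\omega)$, its shifted characteristics are absolutely continuous with densities $(\beta_s, c_s, K_s) \in \Theta(s,X)$ for $(\llambda\otimes P)$-a.e.\ $(s,\omega)$, $s \ge t$. I want a single predictable $F$-valued selector $f_s$ with $(b,\sigma\sigma^*,\n\o k^{-1})(f_s,s,X) = (\beta_s,c_s,K_s)$. Existence of such a measurable selector is exactly where Filippov's implicit function theorem enters: the graph of $\Theta$ is Borel (Standing Assumption \ref{SA: meas gr}), $F$ is compact metrizable and $f \mapsto (b,\sigma\sigma^*,\n\o k^{-1})(f,s,\omega)$ is continuous (Condition \ref{cond: main1}(iii)), so the correspondence of admissible $f$'s has measurable graph with closed nonempty sections, and Filippov/Aumann measurable selection produces a $\mathscr{P}$-measurable $f_\cdot$. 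Setting $m(ds,df) := \delta_{f_{s+t}}(df)\,ds$ on $[0,\infty)$ (and extending the control arbitrarily on $[0,t]$), one checks that under the law $Q$ on $\Omega \times \m$ which is the image of $P$ under $\omega \mapsto (\omega, m(\omega))$ — note $m$ is a measurable function of $\omega$ — the processes $(C_s(g))_{s\ge t}$ are local martingales, because $\int \mathcal{L}g(f,s,X,X_s)\,m_s(df) = \mathcal{L}g(f_s,s,X,X_s)$ reproduces exactly the drift of $g(X_s)$ dictated by the characteristics of $P$ (again via \cite[Ch.~II]{JS}). Thus $Q \in \cP^W(t,\omega)$ and $Q \circ X^{-1} = P$.

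\emph{Main obstacle.} The principal difficulty is Step 2: producing a jointly measurable (predictable) selector $f_\cdot$ realizing the prescribed differential characteristics, and verifying that the resulting $m_0$-valued control turns $X$ into a solution of the relaxed (equivalently, strict) martingale problem. The measurable-selection argument must handle that the target $\mathcal{L}$-valued characteristic is only defined $(\llambda\otimes P)$-a.e.\ and is itself merely predictable, so one applies Filippov's theorem to the composite correspondence $(s,\omega) \mapsto \{f \in F \colon (b,\sigma\sigma^*,\n\o k^{-1})(f,s,\omega) = (\beta_s(\omega),c_s(\omega),K_s(\omega))\}$ after checking this has a Borel graph — which uses Standing Assumption \ref{SA: meas gr} together with the continuity in $f$. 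A secondary subtlety is that $v = v^R = v^W$ as a numerical identity requires no additional work beyond the set equalities, since $\psi$ depends only on the $\Omega$-coordinate; but one should note that the suprema are genuinely attained over the same collection of laws, so the equalities of value functions are literal.
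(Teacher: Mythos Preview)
Your overall architecture matches the paper's proof exactly: the chain of inclusions, Filippov's theorem for the selector in the direction $\cC \subseteq \{Q\circ X^{-1}\colon Q\in\cP^W\}$, and the convexity argument for $\{Q\circ X^{-1}\colon Q\in\cP^R\}\subseteq\cC$. Step~2 is essentially the paper's Lemmata~\ref{lem: control} and~\ref{lem: v leq vW}.

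There is, however, a real gap in your Step~1. You write that the differential characteristics of $X_{\cdot+t}$ \emph{under $P=Q\circ X^{-1}$} are $\int b(f,\cdot+t,X)\,m_{\cdot+t}(df)$ etc. But $m$ is the $\m$-coordinate, so these expressions are functions on $\Omega\times\m$, not on $\Omega$; they cannot serve as the $P$-characteristics, which must be predictable for the filtration of $X$ alone. What you have identified are the characteristics of $X_{\cdot+t}$ under $Q$ with respect to the enlarged filtration $(\cG_s)_{s\ge 0}$. To descend to $P$ one must project to the $X$-filtration. The paper (Lemma~\ref{lem: vR leq v}) does this via \cite[Theorem~9.19, Proposition~9.24]{jacod79}: the $(\cF^*_s)$-characteristics of $X_{\cdot+t}$ under $Q$ are obtained as conditional expectations $E^Q[\,b(\mathsf f(r+t,X,M),r+t,X)\mid\cF^*_{r-}\,]\,dr$, and similarly for the second and third characteristics. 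Only then does one invoke convexity---and here the barycentre argument must be applied a \emph{second} time, now to the conditional-expectation averaging rather than to the $m$-averaging. Finally \cite[Lemma~2.9]{jacod80} transfers the conclusion from $Q$ on $(\Omega\times\m,(\cF^*_s))$ to $P=Q\circ X^{-1}$ on $(\Omega,(\cF^*_s))$. Without this projection step the inclusion $\{Q\circ X^{-1}\colon Q\in\cP^R\}\subseteq\cC$ is not established.

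A smaller point: your barycentre claim (``a convex combination of points of $\Theta$ lies in $\Theta$'') is not automatic because the third coordinate lives in the infinite-dimensional space $\mathcal L$; the paper isolates this as Lemma~\ref{lem: convex combinations}, embedding $\mathcal L$ linearly and homeomorphically into $\bR^{\bN}$ so that the classical result for locally convex spaces applies. You should flag that this needs an argument.
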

	
\begin{remark}
		Theorem \ref{theo: connection to control rules} shows a direct connection between the concepts of nonlinear stochastic processes and the relaxed and weak formulations of stochastic optimal control problems. In particular, it connects the frameworks from the seminal papers \cite{ElKa15} and \cite{neufeld2017nonlinear}.
\end{remark}

\begin{remark}
We will show in Proposition \ref{prop: upper hemi} below that the set \(\cP^R(t, \omega)\) is compact for every \((t,\omega) \in \of 0, \infty\of \). 
Hence, by Theorem \ref{theo: connection to control rules}, the same holds true for \(\cC(t,\omega)\).
In particular, for every \(x \in \bR^d\), the set
\begin{align*}
    \mathfrak{P}_x(\Theta) &:= 
    \Big\{ P \in \mathfrak{P}_{\text{sem}}^{\text{ac}}\colon P \circ X_0^{-1} = \delta_x, (\llambda \otimes P)\text{-a.e. } (dB^P /d\llambda, dC^P/d\llambda, \nu^P/ d \llambda) \in \Theta (\,\cdot\,, X) \Big\}
     \\&=  \cC(0, \bx)
\end{align*}
is compact.
Such a compactness result has been obtained in \cite[Theorem 2.5]{neufeld}
and \cite[Theorem 4.41]{hol16} under global Lipschitz and boundedness assumptions, see also \cite{CN22a,CN22b} for such a result in a path-continuous framework.
As pointed out in \cite{neufeld}, a sufficient and in some cases even necessary condition for the set \(\mathfrak{P}_x(\Theta)\) to be closed is adequate control over the small jumps. More precisely, 
for the L\'evy case considered in \cite{neufeld}, where \(\Theta\) is independent of time and path, this condition can be phrased as
\begin{equation} \label{eq: small jumps}
    \lim_{\delta \to 0} \sup_{f \in F}  \int_{\|x\| \leq \delta} \|x\|^2  \, \n \o \bk(f, \cdot\,)^{-1}(dx) \to 0 \text{ as } \delta \to 0.
\end{equation}
Such a property is entailed in our framework. 
Recall that we equip the set of L\'evy measures~\(\mathcal{L}\) with the weak topology induced by the family of test functions \(\{ x \mapsto f(x) (\|x\|^2 \wedge 1) \colon f \in C_b(\bR^d;\bR) \}\). This topology is stronger compared to the weak topology induced by the collection of all bounded continuous functions vanishing in a neighborhood of the origin as used in \cite{hol16, neufeld}. Notice that, for the topology we use in this paper, any compact set \(K \subset \mathcal{L}\) has property \eqref{eq: small jumps}, i.e.,
\[
\lim_{\delta \to 0} \sup_{\nu \in K}  \int_{\|x\| \leq \delta} \|x\|^2  \, \nu(dx) \to 0 \text{ as } \delta \to 0.
\]
To see this, observe that the function
\[
\bR_+ \times \bR^d \ni (\delta, x) \mapsto \1_{\{\|x\| \leq \delta\}} 
\]
is upper semicontinuous.  Thus, by \cite[Theorem 8.10.61]{bogachev}, the map
\[
\bR_+ \times \mathcal{L} \ni (\delta, \nu) \mapsto \int_{\|x\| \leq \delta} (\|x\|^2 \wedge 1)  \, \nu(dx)
\]
is upper semicontinuous, too.
Hence, thanks to Berge's maximum theorem \cite[Lemma 17.30]{charalambos2013infinite}, the function
\[
g(\delta) := \sup_{\nu \in K} \, \int_{\|x\| \leq \delta} (\|x\|^2 \wedge 1)  \, \nu(dx)
\]
is again upper semicontinuous.
Therefore, 
\[
\limsup_{\delta \to 0} g(\delta) \leq g(0) = 0,
\]
as desired
\end{remark}

 \subsection{Proof of Theorem \ref{theo: connection to control rules}}
 Let us start with auxiliary technical observations. 
 \begin{lemma} \label{lem: convex combinations}
 	Let \(G \subset \bR^d \times \bR^{d \times d} \times \mathcal{L}\) be a compact convex set. Furthermore, let \(\mu\) be a Borel probability measure on \(\bR^d \times \bR^{d \times d} \times \mathcal{L}\) and let \(Y \colon \bR^d \times \bR^{d \times d} \times \mathcal{L} \to \bR^d \times \bR^{d \times d} \times \mathcal{L}\) be a Borel map. Then, 
 	\[
 	\mu (Y \in G) = 1 \quad \Longrightarrow \quad \int Y d \mu \in G.
 	\]
 \end{lemma}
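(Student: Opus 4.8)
Lemma \ref{lem: convex combinations} is a version of the well-known fact that the barycenter of a probability measure concentrated on a compact convex set lies in that set. The plan is to prove it by contradiction using the Hahn--Banach separation theorem, after checking that the ambient space is well enough behaved for the barycenter $\int Y\, d\mu$ to make sense.

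First I would set up the functional-analytic framework. The set $\mathcal{L}$ of L\'evy measures is Polish, but it is convenient to realize the statement inside a locally convex topological vector space: identify each element $\nu \in \mathcal{L}$ with the finite signed measure $(\|x\|^2 \wedge 1)\,\nu(dx)$ and embed $\mathcal{L}$ into the space of finite signed Borel measures on $\bR^d$ equipped with the weak topology tested against $C_b(\bR^d;\bR)$; the topology on $\mathcal{L}$ is precisely the one induced by this embedding. Then $\bR^d \times \bR^{d\times d} \times \mathcal{L}$ sits inside the locally convex space $E := \bR^d \times \bR^{d\times d} \times \mathcal{M}_b(\bR^d)$, where $\mathcal{M}_b(\bR^d)$ carries the weak topology; $E$ is Hausdorff and locally convex, and its continuous dual separates points. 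Since $G$ is compact in $\bR^d \times \bR^{d\times d} \times \mathcal{L}$, it is compact, convex, and bounded in $E$. Because $Y$ takes values $\mu$-a.s. in the compact (hence, for the coordinates, bounded) set $G$, the map $Y$ is $\mu$-integrable coordinatewise, so the barycenter $\bar Y := \int Y\, d\mu \in E$ is well defined, and for every continuous linear functional $\ell$ on $E$ we have $\ell(\bar Y) = \int \ell(Y)\, d\mu$ (this is the defining property of the Pettis/weak integral, valid here because $\ell \circ Y$ is bounded and measurable).

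Next comes the separation argument. Suppose for contradiction that $\bar Y \notin G$. Since $G$ is a compact convex subset of the locally convex Hausdorff space $E$ and $\{\bar Y\}$ is compact convex and disjoint from $G$, the Hahn--Banach separation theorem gives a continuous linear functional $\ell$ on $E$ and a real number $c$ with $\ell(\bar Y) > c \geq \ell(y)$ for all $y \in G$. Then
\[
\ell(\bar Y) = \int \ell(Y)\, d\mu \leq \int c\, d\mu = c,
\]
using $\mu(Y \in G) = 1$ and $\ell(Y) \leq c$ $\mu$-a.s., which contradicts $\ell(\bar Y) > c$. Hence $\bar Y \in G$, which is the claim.

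The main obstacle, and the step deserving the most care, is the identification of the correct ambient locally convex space and the verification that $\int Y\, d\mu$ is a genuine weak (Pettis) integral satisfying $\ell(\int Y\, d\mu) = \int \ell(Y)\, d\mu$ for all continuous functionals $\ell$. Concretely, one must confirm that the topology placed on $\mathcal{L}$ in the paper coincides with the subspace topology inherited from $(\mathcal{M}_b(\bR^d), \text{weak})$ under the embedding $\nu \mapsto (\|x\|^2\wedge 1)\nu$, so that continuity of $\ell$ on $E$ is exactly the right notion, and that compactness of $G$ in the paper's sense transfers to compactness in $E$; once the topological dictionary is in place, integrability is immediate from $G$ being bounded and the separation step is routine. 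Alternatively, if one prefers to avoid the infinite-dimensional measure space entirely, one can argue more elementarily: by definition of the topology, membership in the compact convex $G$ is determined by countably many functionals of the form $\nu \mapsto \int f(x)(\|x\|^2\wedge 1)\,\nu(dx)$ together with the (finite-dimensional) drift and diffusion coordinates, and one applies the finite-dimensional barycenter property along each such functional; but the Hahn--Banach route above is cleaner and I would present that.
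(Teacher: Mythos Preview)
Your proof is correct. Both you and the paper reduce the claim to a barycenter statement in a linear space where separation is available; the difference is only in the choice of ambient space. You embed \(\mathcal{L}\) into \(\mathcal{M}_b(\bR^d)\) with the weak topology via \(\nu \mapsto (\|x\|^2 \wedge 1)\,\nu\) and run the Hahn--Banach separation argument directly. The paper instead picks a countable determining family \(\{\g_k\}\) for the topology on \(\mathcal{L}\), defines a linear homeomorphism \(\mathsf{J}\colon (b,a,\nu)\mapsto (b,a,(\int \g_k\,d\nu)_k)\) into \(\bR^d\times\bR^{d\times d}\times\bR^{\mathbb{N}}\), and then invokes an off-the-shelf barycenter result (Sion) on the pushforward \(\mu\circ \mathsf{J}(Y)^{-1}\). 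In other words, your ``alternative'' sketch at the end is precisely the paper's route. The trade-off: the paper avoids discussing Pettis integrability by landing in a concrete Fr\'echet space where the barycenter theorem is a black box, whereas your version is more self-contained but requires the paragraph you wrote on why \(\int Y\,d\mu\) really lies in \(E\) and interacts correctly with continuous functionals. Either way the substance is the same separation argument.
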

\begin{proof}
Let \(\{\g_k \colon k \in \mathbb{N}\}\) be a countable set of continuous bounded functions that determine the convergence in  \(\mathcal{L}\), i.e., 
all functions \(g_k, \, k \in \mathbb{N},\) are integrable for all measures in \(\mathcal{L}\) and for every sequence \((\nu^n)_{n = 0}^\infty \subset \mathcal{L}\) we have \(\nu^n \to \nu^0\) in \(\mathcal{L}\) if and only if \(\int g_k d \nu^n \to \int g_k d \nu^0\) for all \(k \in \mathbb{N}\). The existence of such a {\em countable} family is well-known, see, e.g., \cite[Remark~VI.3.4]{JS}.
	
	Define a map \(\mathsf{J} \colon \bR^d \times \bR^{d \times d} \times \mathcal{L} \to \bR^d \times \bR^{d \times d} \times \mathbb{R}^{\mathbb{N}}\) by 
	\[
	\mathsf{J} (b, a, \nu) := \Big( b, a, \Big( \int \g_i d \nu\Big)_{i = 1}^\infty \Big).
	\]
	It is easy to see that \(\mathsf{J}\) is a linear homeomorphism. In particular, \(\mathsf{J}\) maps convex sets to convex sets.
	
	Let \(G\) and \(Y\) be as in the statement of the lemma and presume that \(\mu (Y \in G) = 1\). Define the push-forward measure \(\overline{\mu} := \mu \circ \mathsf{J} (Y)^{-1}\) and notice that \(\overline{\mu} ( \mathsf{J} (G) ) = 1\). As \(\mathsf{J}\) is a linear homeomorphism, the set \(\mathsf{J} (G)\) is compact and convex. Hence, it follows from \cite[Theorems~II.4.3 and II.6.2]{sion} that 
	\[
	\int \on{id} d \overline{\mu} \in \mathsf{J} (G).
	\]
	Using that
	\[
	\mathsf{J} \hspace{0.05cm}\Big( \int Y d \mu \Big) = \int \mathsf{J} (Y) d \mu = \int \on{id} d \overline{\mu} \in \mathsf{J} (G) \quad \Longleftrightarrow \quad \int Y d \mu \in G, 
	\]
	we conclude the claim of the lemma.
\end{proof}

For \(\omega, \alpha \in \Omega\) and \(t \in \bR_+\), we define the concatenation
\[
\omega \otimes_t \alpha := \omega \1_{[0, t)} + \alpha \1_{[t, \infty)}.
\]
\begin{lemma} \label{lem: control}
Suppose that Condition \ref{cond: main1} (i) and (iii) hold. 
    Take \((t, \omega) \in \of 0, \infty\of\) and let \(\mathscr{P}^t\) be the predictable \(\sigma\)-field for the canonical filtration generated by \(X_{\cdot + t}\). For every \(P \in \mathcal{C} (t, \omega)\), there exists a \(\mathscr{P}^t\)-measurable map \(\f = \f (P) \colon \of 0, \infty\of \hspace{0.05cm} \to F\) such that \((\llambda \otimes P)\)-a.e.
    \[
    (d B^P_{\cdot + t} / d\llambda, d C^P_{\cdot + t} / d \llambda, d \nu^P_{\cdot + t} / d \llambda) = (b, \sigma \sigma^*, \n \o k^{-1}) (\f, \cdot + t, \omega \otimes_t \cdot \hspace{0.05cm}).
    \]
\end{lemma}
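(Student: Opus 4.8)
The statement is an application of Filippov's implicit function theorem (a measurable selection result). The plan is to realize the triple of differential characteristics as a measurable function of $(t,\omega,\cdot)$ taking values in the set $\Theta(\cdot+t, \omega\otimes_t\cdot)$, and then to select a parameter $\f\in F$ that produces it via the coefficient map. First I would record that, since $P\in\mathcal{C}(t,\omega)$, we have $P(X=\omega\text{ on }[0,t])=1$, so the process $\omega\otimes_t X_{\cdot+t}$ coincides $P$-a.s.\ with $X$; hence the predictable characteristics of $X_{\cdot+t}$ can be written as a $\mathscr P^t$-measurable triple-valued map
\[
(t,\omega,s,X)\longmapsto \big(dB^P_{s+t}/d\llambda,\, dC^P_{s+t}/d\llambda,\, d\nu^P_{s+t}/d\llambda\big)(s,X),
\]
which by the definition of $\mathcal{C}(t,\omega)$ lies, $(\llambda\otimes P)$-a.e., in $\Theta(s+t,\omega\otimes_t X)$. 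This is the hypothesis "$(\llambda\otimes P)$-a.e.\ $(\dots)\in\Theta(\cdot+t,X)$" rewritten using that $X=\omega\otimes_t X_{\cdot+t}$ off a $P$-null set.

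Next I would set up the Filippov selection. Consider the Carathéodory-type map
\[
\Phi\colon F\times\of 0,\infty\of\times\Omega\to\bR^d\times\bS\times\mathcal{L},\qquad \Phi(f,s,\alpha):=\big(b(f,s,\alpha),\,\sigma\sigma^*(f,s,\alpha),\,\n\o k(f,s,\alpha,\cdot)^{-1}\big),
\]
which is jointly measurable (by the measurability assumptions on $b,\sigma,k$ and Standing Assumption \ref{SA: meas gr}) and, by Condition \ref{cond: main1}~(iii), continuous in $f$ for each fixed $(s,\alpha)$. The set $\Theta(s+t,\omega\otimes_t X_{s+t}(\cdot))$ is precisely the image $\Phi(F,s+t,\omega\otimes_t X_{\cdot+t})$. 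Since $F$ is compact metrizable (Condition \ref{cond: main1}~(i)) and $\Phi$ is measurable in the state variable and continuous in $f$, Filippov's implicit function theorem (see e.g.\ \cite[Theorem 18.17]{charalambos2013infinite}, or the version in \cite{nicole1987compactification}) applies: given the $\mathscr P^t$-measurable selector
\[
(s,\alpha)\longmapsto \big(dB^P_{s+t}/d\llambda,\,dC^P_{s+t}/d\llambda,\,d\nu^P_{s+t}/d\llambda\big)(s,\alpha)\in\Phi\big(F,\,s+t,\,\omega\otimes_t\alpha\big),
\]
defined $(\llambda\otimes P)$-a.e., there exists a $\mathscr P^t$-measurable map $\f=\f(P)\colon\of 0,\infty\of\to F$ such that, $(\llambda\otimes P)$-a.e.,
\[
\big(dB^P_{\cdot+t}/d\llambda,\,dC^P_{\cdot+t}/d\llambda,\,d\nu^P_{\cdot+t}/d\llambda\big)=\Phi\big(\f,\,\cdot+t,\,\omega\otimes_t X_{\cdot+t}\big)=(b,\sigma\sigma^*,\n\o k^{-1})\big(\f,\cdot+t,\omega\otimes_t\cdot\big),
\]
which is the assertion. (Strictly speaking Filippov is usually phrased for maps into a fixed Polish target; here $\bR^d\times\bS\times\mathcal{L}$ is Polish since $\mathcal{L}$ is Polish by \cite[Lemma A.1]{C22a}, so there is no issue, but one should double-check that the graph $\{(s,\alpha,f,y):y=\Phi(f,s+t,\omega\otimes_t\alpha)\}$ is measurable, which follows from joint measurability of $\Phi$.)

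The main obstacle I anticipate is purely bookkeeping rather than conceptual: making precise the identification "$X=\omega\otimes_t X_{\cdot+t}$ $P$-a.s." at the level of the predictable characteristics, i.e.\ that one may replace the generic path $X$ appearing inside $b,\sigma,k$ (and inside $\Theta$) by $\omega\otimes_t X_{\cdot+t}$ without changing anything $(\llambda\otimes P)$-a.e. This is legitimate because the characteristics are predictable and the two processes agree on $[0,t)$ certainly and on $[t,\infty)$ $P$-a.s.; but it requires care with the predictable $\sigma$-field $\mathscr P^t$ (generated by the right-continuous filtration of $X_{\cdot+t}$) versus the full predictable $\sigma$-field $\mathscr P$, and with the fact that the coefficients are a priori only $\mathscr P$-measurable in the path argument. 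The cleanest route is to observe that on $\of 0,\infty\of$ the map $\alpha\mapsto\omega\otimes_t\alpha$ transports $\mathscr P^t$-measurable functions to $\mathscr P$-measurable functions in an appropriate sense, so that the composite $(s,\alpha)\mapsto\Phi(f,s+t,\omega\otimes_t\alpha)$ is $\mathscr P^t$-measurable and the Filippov selection $\f$ is automatically $\mathscr P^t$-measurable. I would also remark that the existence of \emph{some} measurable choice of characteristics in $\Theta$ is guaranteed by the very definition of $\mathcal{C}(t,\omega)$ together with the existence, for any semimartingale law, of a predictable version of its differential characteristics (\cite{JS}), so the only new content is the selection of $\f$, which is exactly what Filippov delivers.
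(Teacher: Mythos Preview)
Your proposal is correct and follows the same approach as the paper: both apply Filippov's implicit function theorem (\cite[Theorem~18.17]{charalambos2013infinite}) to the Carath\'eodory map $(f,s,\alpha)\mapsto(b,\sigma\sigma^*,\n\o k^{-1})(f,s+t,\omega\otimes_t\alpha)$, using Condition~\ref{cond: main1}~(i) for compactness of $F$ and~(iii) for continuity in $f$. The one technical point the paper makes explicit, and which you flag as bookkeeping, is that the differential characteristics lie in $\Theta$ only $(\llambda\otimes P)$-a.e., so before invoking Filippov the paper redefines the target function on the (\(\mathscr P^t\)-measurable, by the measurable-graph property of $\Theta$) exceptional set to equal $\Phi(f_0,\cdot)$ for some fixed $f_0\in F$, obtaining an everywhere-defined $\mathscr P^t$-measurable selector to which Filippov applies directly.
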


\begin{proof}
Set
\[
G := \big\{ (s, \alpha) \in \of 0, \infty\of \colon (d B^P_{\cdot + t} / d\llambda, d C^P_{\cdot + t} / d \llambda, d \nu^P_{\cdot + t} / d \llambda) (s, \alpha) \in \Theta (s + t, \omega \otimes_t \alpha) \big\}.
\]
By \cite[Lemma 2.9]{CN22a}, the correspondence \((s, \alpha) \mapsto \Theta^* (s, \alpha) := \Theta (s + t, \omega \otimes_t \alpha)\) has a \(\mathscr{P}^t \otimes \mathcal{B}(\bR^d) \otimes \mathcal{B}(\bR^{d \times d}) \otimes \mathcal{B}(\mathcal{L})\)-measurable graph. Hence, 
\[
G = \big\{ (s, \alpha) \in \of 0, \infty \of \colon (s, \alpha, d B^P_{s + t} (\alpha) / d\llambda, d C^P_{s + t} (\alpha) / d \llambda, d \nu^P_{s + t} (\alpha) / d \llambda) \in \on{gr} \Theta^* \big\} \in \mathscr{P}^t.
\]
	For some arbitrary \(f_0 \in F\), set 
	\begin{align*}
\pi (s, \alpha) := \begin{cases} (d B^P_{\cdot + t} / d\llambda, d C^P_{\cdot + t} / d \llambda, d \nu^P_{\cdot + t} / d \llambda) (s, \alpha), & \text{ for } (s, \alpha) \in G, \\
(b, \sigma \sigma^*, \n \o k^{-1}) (f_0, s + t, \omega \otimes_t \alpha), & \text{ for \((s, \alpha) \not \in G\)}. \end{cases}
	\end{align*}
	The function \(\pi\) is \(\mathscr{P}^t\)-measurable. By Condition \ref{cond: main1} (iii), 
 \[
 (f, s, \alpha) \mapsto (b, \sigma \sigma^*, \n \o k^{-1}) (f, s + t, \omega \otimes_t \alpha)
 \]
 is a Carath\'eodory function in the sense that it is continuous in the \(F\)-variable and \(\mathscr{P}^t\)-measurable in the \(\of0, \infty\of\)-variable (where the \(\mathscr{P}^t\)-measurability of the third coordinate follows from \cite[Theorem 8.10.61]{bogachev}). Now, by Filippov's implicit function theorem (\cite[Theorem~18.17]{charalambos2013infinite}), there exists a \(\mathscr{P}^t\)-measurable function \(\f \colon \of 0, \infty\of \, \to F\) such that 
 \[
 \pi (s, \alpha) = (b, \sigma \sigma^*, \n \o k^{-1}) (\f (s, \alpha), s + t, \omega \otimes_t \alpha)
 \]
for all \((s, \alpha) \in \of 0, \infty\of\).
By the definition of \(\mathcal{C} (t, \omega)\), we have \((\llambda \otimes P)\)-a.e.
\[
\pi = (d B^P_{\cdot + t} / d\llambda, d C^P_{\cdot + t} / d \llambda, d \nu^P_{\cdot + t} / d \llambda).
\]
This completes the proof.
\end{proof}

\begin{lemma} \label{lem: v leq vW}
Suppose that Condition \ref{cond: main1} (i), (iii) and (iv) hold. Then, for every \((t, \omega) \in \of 0, \infty\of\),
\[
\cC (t, \omega) \subset \big\{ Q \circ X^{-1} \colon Q \in \mathcal{P}^W (t, \omega) \big\}.
\]
\end{lemma}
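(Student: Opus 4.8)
The plan is to realise a given \(P\in\cC(t,\omega)\) as the \(\Omega\)-marginal of a weak control rule \(Q\) on \(\Omega\times\m\), obtained by attaching to \(P\)-a.e.\ path the predictable control that drives its characteristics. First I would invoke Lemma~\ref{lem: control} to pick a \(\mathscr{P}^t\)-measurable map \(\f=\f(P)\colon\of 0,\infty\of\,\to F\) with \((\llambda\otimes P)\)-a.e.
\[
(dB^P_{\cdot+t}/d\llambda,\,dC^P_{\cdot+t}/d\llambda,\,d\nu^P_{\cdot+t}/d\llambda)=(b,\sigma\sigma^*,\n\o k^{-1})(\f,\cdot+t,\omega\otimes_t\cdot).
\]
A deterministic time shift, together with extending by a fixed \(f_0\in F\) on \([0,t]\), turns this into an \(\F\)-predictable map \(\Phi\colon\bR_+\times\Omega\to F\) such that, using \(P(X=\omega\text{ on }[0,t])=1\) and hence \(X=\omega\otimes_t X\) \(P\)-a.s., the differential characteristics of \(X\) on \([t,\infty)\) are \((\llambda\otimes P)\)-a.e.\ equal to \((b,\sigma\sigma^*,\n\o k(\cdot)^{-1})(\Phi_r,r,X)\). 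I would then define the lift \(\iota\colon\Omega\to\Omega\times\m\) by \(\iota(\alpha):=(\alpha,\delta_{\Phi_\cdot(\alpha)}(dq)\,ds)\); joint measurability of \(\Phi\) makes \(\iota\) measurable into \((\Omega\times\m,\cF\otimes\M)\), its image lies in \(\Omega\times\m_0\), and \(X\circ\iota=\on{id}\). Setting \(Q:=P\circ\iota^{-1}\), one reads off immediately that \(Q\circ X^{-1}=P\), that \(Q(X=\omega\text{ on }[0,t])=1\), and that \(Q(M\in\m_0)=1\).

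It then remains to verify that \(Q\in\mathcal{P}^R(t,\omega)\), i.e.\ that \((C_s(g))_{s\ge t}\) is a local \(Q\)-martingale for \(\cG\) for every \(g\in C^2_b(\bR^d;\bR)\). Since \(Q\)-a.s.\ \(M(dr,df)=\delta_{\Phi_r(X)}(df)\,dr\), we have \(Q\)-a.s.
\[
C_s(g)=g(X_s)-\int_0^s\mathcal{L} g\bigl(\Phi_r(X),r,X,X_r\bigr)\,dr=:\widetilde C_s,
\]
which is a functional of \(X\) alone. By the standard characterisation of semimartingales through their differential characteristics (see, e.g., \cite{JS}) and the identification of the characteristics of \(X\) on \([t,\infty)\) from the first step, \((\widetilde C_s-\widetilde C_t)_{s\ge t}\) is a local martingale under \(P\) for the (\(P\)-augmented) natural filtration of \(X_{\cdot+t}\); as \(\cF_t\) is \(P\)-trivial (\(X\) being \(P\)-a.s.\ deterministic on \([0,t]\)), it is also a local \(\F\)-martingale on \([t,\infty)\). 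Here Condition~\ref{cond: main1}~(iv) enters: together with \(g\in C^2_b\) and the boundedness of the Lipschitz truncation function \(h\), it makes \(\mathcal{L} g\) bounded on \(F\times\of 0,T\gs\times\bR^d\) for every \(T>0\), so \(\widetilde C\) is bounded on compact time intervals and hence a genuine \(\F\)-martingale on \([t,\infty)\). Transporting this along \(\iota\) (with \(X\circ\iota=\on{id}\)) and using that \(Q\)-a.s.\ \(\cG_s\) is contained in the \(Q\)-augmentation of \(\sigma(X_r\colon r\le s)\) while \(C_s(g)\) is \(\cG_s\)-measurable (the coefficient \(\mathcal{L} g\) being continuous in \(f\), cf.\ the discussion of the vague topology on \(\m\)), the martingale property transfers: \((C_s(g))_{s\ge t}\) is a true, hence local, \(Q\)-martingale for \(\cG\). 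Consequently \(Q\in\mathcal{P}^W(t,\omega)\) with \(Q\circ X^{-1}=P\), which is the asserted inclusion.

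I expect the main obstacle to be the bookkeeping around the time shift and the filtrations: converting the \(\mathscr{P}^t\)-measurable control \(\f\) into an \(\F\)-predictable \(\Phi\), translating the characteristics of \(X_{\cdot+t}\) into those of \(X\) on \([t,\infty)\), and — the most delicate point — verifying that the (local) martingale property of \(C_s(g)\) persists when passing from the natural filtration of \(X\) to the canonical filtration \(\cG\) on \(\Omega\times\m\). Upgrading the relevant local martingales to genuine martingales by means of Condition~\ref{cond: main1}~(iv) is what makes this final step routine.
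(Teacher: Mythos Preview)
Your proposal is correct and follows essentially the same route as the paper: invoke Lemma~\ref{lem: control} to obtain a predictable control \(\f\), lift \(P\) to \(Q:=P\circ(X,\mathfrak{M})^{-1}\) with \(\mathfrak{M}(ds,df)=\delta_{\f([s-t]\vee 0,X)}(df)\,ds\), and verify the \((\cG_s)\)-martingale property of \(C(g)\) via \cite[Theorem~II.2.42]{JS} together with the boundedness from Condition~\ref{cond: main1}~(iv). The paper compresses your filtration transfer (from the natural filtration of \(X_{\cdot+t}\) to \((\cF_{s+t})_{s\ge 0}\) and then to \((\cG_s)_{s\ge 0}\)) into the single phrase ``it follows readily'', but the mechanism is exactly the one you spell out, namely that under \(Q\) the \(M\)-component is a measurable functional of \(X\) so that \(\cG_s\) agrees with \(\cF_s\) up to \(Q\)-null sets.
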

\begin{proof}
Take \((t, \omega) \in \of 0, \infty\of\) and \(P \in \cC (t, \omega)\). By Lemma~\ref{lem: control} and \cite[Theorem II.2.42]{JS}, there exists a \(\mathscr{P}^t\)-measurable map \(\f \colon \of 0, \infty\of \, \to F\) such that 
\begin{align*}
\K (g) &:= g (X_{\cdot + t}) - \int_0^\cdot \mathcal{L} g (\f (s, X), s + t, \omega \otimes_t X, X_{s + t})ds 
\\&\ = g (X_{\cdot + t}) - \int_{t}^{\cdot + t} \mathcal{L} g (\f (s - t, X), s, \omega \otimes_t X, X_s) ds.
\end{align*}
are local \(P\)-martingales for the canonical filtration of \(X_{\cdot + t}\) and all \(g \in C^2_b(\bR^d; \bR)\). Thanks to Condition \ref{cond: main1} (iv), the processes are bounded on compact time intervals. Using also that \(P\)-a.s. \(X = \omega\) on \([0, t]\), we get that all \(\K (g)\) are \(P\)-\((\cF_{s + t})_{s \geq 0}\)-martingales. Set 
\[
\mathfrak{M} (ds, df) := \delta_{\f ( [s - t] \, \vee\, 0,\, X)} (df) ds, 
\]
and \(Q := P \circ (X, \mathfrak{M})^{-1}\). It follows readily from the \(P\)-\((\cF_{s + t})_{s \geq 0}\)-martingale property of \(\K (g)\), that \((C_s (g))_{s \geq t}\) is a \(Q\)-\((\cG_s)_{s \geq 0}\)-martingale. Hence, we obtain \(Q \in \mathcal{P}^W (t, \omega)\) and consequently, 
\[
P = Q \circ X^{-1} \in \big\{ R \circ X^{-1} \colon R \in \mathcal{P}^W (t, \omega) \big\}.
\]
The proof is complete.
\end{proof}

\begin{lemma} \label{lem: vR leq v}
Suppose that Condition \ref{cond: main1} holds. Then, for every \((t, \omega) \in \of 0, \infty\of\),
\[
\big\{ Q \circ X^{-1} \colon Q \in \mathcal{P}^R (t, \omega) \big\} \subset \cC (t, \omega).
\]
\end{lemma}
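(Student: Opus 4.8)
The plan is to show that if $Q \in \mathcal{P}^R(t,\omega)$, then its $X$-marginal $P := Q \circ X^{-1}$ lies in $\mathcal{C}(t,\omega)$. First I would record the obvious structural facts: since $Q(X = \omega \text{ on } [0,t]) = 1$, the same holds for $P$, so the ``pinned'' condition is immediate. The substance is to verify that $P \in \mathfrak{P}^{\textup{ac}}_{\textup{sem}}(t)$ and that its differential characteristics after $t$ fall into $\Theta$. To this end, fix $g \in C^2_b(\bR^d;\bR)$. By the definition of a relaxed control rule, the processes $C_\cdot(g) = g(X_\cdot) - \int_0^\cdot \int \mathcal{L}g(f,s,X,X_s)\,M(ds,df)$ are local $Q$-martingales on $[t,\infty)$ for the filtration $(\cG_s)$. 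I would then define, for each $s$, the random probability kernel on $F$ coming from disintegrating $M$, and use it to build a path-dependent ``averaged'' triplet
\[
\big(\overline{b}_s, \overline{a}_s, \overline{K}_s\big) := \int_F \big(b, \sigma\sigma^*, \n \o k^{-1}\big)(f, s, X)\, M(\{s\} \times df) \big/ ds,
\]
understood $(\llambda \otimes Q)$-a.e. via the disintegration $M(ds,df) = \kappa_s(df)\,ds$ of the (Lebesgue-projected) Radon measure $M$. Rewriting $\int_0^\cdot \int \mathcal{L}g(f,s,X,X_s) M(ds,df) = \int_0^\cdot \big(\text{generator of }(\overline{b},\overline{a},\overline{K})\text{ applied to }g\big)(s)\,ds$, the local martingale property of $C(g)$ for all $g \in C^2_b$ is, by \cite[Theorem~II.2.42]{JS}, exactly the statement that $X_{\cdot+t}$ is, under $Q$ (hence under $P$), a semimartingale whose characteristics after $t$ are absolutely continuous with differential characteristics $(\overline{b}, \overline{a}, \overline{K})(\cdot+t, X)$. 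This gives $P \in \mathfrak{P}^{\textup{ac}}_{\textup{sem}}(t)$.

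It remains to check that $(\llambda \otimes P)$-a.e. the triplet $(\overline{b}_s, \overline{a}_s, \overline{K}_s)(\cdot+t,X)$ lies in $\Theta(\cdot+t,X)$. This is where Condition~\ref{cond: main1} enters decisively: by parts (i) and (iii), $F$ is compact metrizable and $f \mapsto (b,\sigma\sigma^*,\n \o k^{-1})(f,s,\omega)$ is continuous from $F$ into $\bR^d \times \bS \times \mathcal{L}$, so $\Theta(s,\omega)$ is the continuous image of a compact set, hence compact; by part (ii) it is convex. Since $\kappa_s$ is a probability measure on $F$ concentrated (being supported on all of $F$, but integrating against a function with image in $\Theta(\cdot+t,X)$) so that $(\overline{b}_s,\overline{a}_s,\overline{K}_s)$ is the barycenter of the push-forward of $\kappa_s$ under the continuous map $f \mapsto (b,\sigma\sigma^*,\n\o k^{-1})(f, s+t, X)$, Lemma~\ref{lem: convex combinations} applies with $G = \Theta(s+t,X)$: the push-forward measure gives full mass to the compact convex set $\Theta(s+t,X)$, hence its barycenter lies in $\Theta(s+t,X)$ as well. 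Running this for $(\llambda\otimes Q)$-a.e.\ $(s,\omega)$ and invoking Condition~\ref{cond: main1}~(iv) (boundedness) to guarantee integrability of the triplet so that the barycenter is well-defined, we obtain the required inclusion, and therefore $P \in \mathcal{C}(t,\omega)$.

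Two technical points deserve care. First, the disintegration $M(ds,df) = \kappa_s(df)\,ds$ must be made rigorous and measurable in a path-dependent way: one uses that $M$ has Lebesgue marginal on $\bR_+$ together with a measurable-selection/regular-conditional-probability argument, selecting $\kappa$ jointly measurably in $(\omega, m, s)$; this is standard for Radon measures on $\bR_+ \times F$ with $F$ compact metrizable, and is effectively the content built into the definition of $\m$ in Section~\ref{sec: control}. Second, one must justify interchanging ``$\int_F \mathcal{L}g(f,\cdot)\,\kappa_\cdot(df)$ is the generator of the averaged triplet applied to $g$'': this is a linearity computation, since $\mathcal{L}g(f,s,\omega,x)$ is an affine function of $(b(f,\cdot), \sigma\sigma^*(f,\cdot), \n\o k(f,\cdot)^{-1})$ for fixed $g$ (the drift term is linear in $b$, the second-order term linear in $a$, and the integral term linear in the jump measure), so averaging the coefficient over $\kappa_s$ and then applying the generator coincides with averaging $\mathcal{L}g$ itself. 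I expect the main obstacle to be the careful measurable disintegration of $M$ and the verification that the resulting barycenter construction is $(\llambda\otimes Q)$-a.e.\ well-defined and predictable; the convexity/compactness argument via Lemma~\ref{lem: convex combinations} is then the clean conceptual heart, and the martingale-problem bookkeeping via \cite[Theorem~II.2.42]{JS} is routine.
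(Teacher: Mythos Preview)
Your proposal assembles the right ingredients but has a genuine gap at the step ``under $Q$ (hence under $P$)''. The averaged triplet $(\overline{b}_s, \overline{a}_s, \overline{K}_s) = \int_F (b,\sigma\sigma^*,\n\o k^{-1})(f,s,X)\,\kappa_s(df)$ depends on $(s, X, M)$, not on $(s, X)$ alone, because the disintegration kernel $\kappa_s$ is a function of $M$ (as you yourself note when you select $\kappa$ ``jointly measurably in $(\omega, m, s)$''). Thus \cite[Theorem~II.2.42]{JS} tells you only that $X_{\cdot+t}$ is a $(\cG_s)$-semimartingale under $Q$ with $(X,M)$-dependent differential characteristics. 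Membership in $\cC(t,\omega)$, however, concerns the characteristics of $X_{\cdot+t}$ in its \emph{own} filtration under $P = Q \circ X^{-1}$, and semimartingale characteristics are not invariant under shrinking the filtration.

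The paper closes this gap by an explicit projection: after writing the $(\cG_s)$-characteristics as $(b,\sigma\sigma^*,\n\o k^{-1})(\mathsf{f}(s,X,M),s,X)$ via Filippov's theorem (a convenience, not essential), it invokes \cite[Theorem~9.19, Proposition~9.24]{jacod79} to pass to the filtration $(\cF^*_s)$ generated by $X_{\cdot+t}$, obtaining characteristics whose densities are the conditional expectations $E^Q[(b,\sigma\sigma^*,\n\o k^{-1})(\mathsf{f},\cdot+t,X)\mid \cF^*_{\cdot-}]$. Lemma~\ref{lem: convex combinations} is then applied a \emph{second} time---now to the conditional law rather than to $\kappa_s$---to conclude that these projected densities still lie in the compact convex set $\Theta(\cdot+t,X)$, and finally \cite[Lemma~2.9]{jacod80} transfers the conclusion from $Q$ to $Q\circ X^{-1}$. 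Your single application of Lemma~\ref{lem: convex combinations} handles the mixing over $f \in F$ but not the mixing over $M$ that is implicit in the filtration reduction.
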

\begin{proof}
Take \((t, \omega) \in \of 0, \infty\of\) and \(Q \in \mathcal{P}^R (t, \omega)\). 
By \cite[Lemma 3.2]{lackerSPA}, there exists a \((\mathcal{G}_t)_{t \geq 0}\)-predictable probability kernel \(\mathsf{k} \colon \bR_+ \times  \m \times \mathcal{B}(F) \to \mathbb{R}_+\) such that 
\[
m (ds, df) = \mathsf{k} (s, m, df)  ds.
\]
For \((s, \alpha, m) \in \of 0, \infty\of \hspace{0.05cm} \times \hspace{0.05cm} \m\), define 
\[
\pi (s, \alpha, m) := \int_F ( b, \sigma \sigma^*, \n \o k^{-1}) (f, s, \alpha) \mathsf{k} (s, m, df).
\]
Thanks to Condition \ref{cond: main1} (i) -- (iii) and Lemma \ref{lem: convex combinations}, \(\pi (s, \alpha, m) \in \Theta (s, \alpha)\) for all \((s, \alpha, m) \in \of 0, \infty\of \hspace{0.05cm} \times \hspace{0.05cm} \m\). By Filippov's implicit functions theorem (\cite[Theorem 18.17]{charalambos2013infinite}), there exists a predictable function \(\mathsf{f} \colon \of 0, \infty\of \hspace{0.05cm} \times \hspace{0.05cm} \m \to F\) such that 
\[
\pi (s, \alpha, m) = ( b, \sigma \sigma^*, \n \o k^{-1})( \mathsf{f} (s, \alpha, m), s, \alpha), \quad (s, \alpha, m) \in \of 0, \infty\of \hspace{0.05cm} \times \hspace{0.05cm} \m.
\]
We conclude that, for all \((\alpha, m) \in \Omega \times \m\),
\[
\int_F (b, \sigma \sigma^*, \n \o k^{-1})(f, s, \alpha) m (ds, df) = (b, \sigma \sigma^*, \n \o k^{-1})(\mathsf{f} (s, \alpha, m), s, \alpha) ds.
\]
In particular, for every \(g \in C^2_b (\bR^d; \bR)\),
\[
\int_0^\cdot \mathcal{L} g (\mathsf{f} (s, X, M), s, X, X_s) ds = \int_0^\cdot \int_F \mathcal{L} g (f, s, X, X_s) M(ds, df).
\]
Hence, for every \(g \in C^2_b (\bR^d; \bR)\), as \(Q \in \mathcal{P}^R (t, \omega)\), and using Condition \ref{cond: main1} (iv),
the process 
\[
g (X_s) - \int_0^s \mathcal{L} g (\mathsf{f} (r, X, M), r, X, X_r) dr, \quad \ s \geq t,
\]
is a \(Q\)-\((\cG_s)_{s \geq 0}\)-martingale. 
Let \((\cF^*_s)_{s \geq 0}\) be the (right-continuous) filtration generated by~\(X_{\cdot + t}\). By \cite[Theorem~9.19, Proposition~9.24; (9.23)]{jacod79}, the process 
\[
g (X_{\cdot + t}) - \int_0^{\cdot} E^Q \big[ \mathcal{L} g (\mathsf{f} (r + t, X, M), r + t, X, X_{(r + t)-}) \big | \mathcal{F}^*_{r-} \big] dr
\]
is a (local) \(Q\)-\((\cF^*_s)_{s \geq 0}\)-martingale. Thanks to \cite[Theorem II.2.42]{JS}, this implies that \(X_{\cdot + t}\) is a \(Q\)-\((\cF^*_s)_{s \geq 0}\)-semimartingale with characteristics 
\begin{align*}
	B^Q &= \int_0^\cdot E^Q \big[ b (\mathsf{f} (r + t, X, M), r + t, X) \big| \cF^*_{r-} \big] dr, \\
	C^Q &= \int_0^\cdot E^Q \big[ \sigma \sigma^* (\mathsf{f} (r + t, X, M), r + t, X) \big| \cF^*_{r-} \big] dr, \\
	\nu^Q (dr, G) &= E^Q \Big[ \int \1_{G \backslash \{0\}} (k (\mathsf{f} (r + t, X, M), r + t, X, y)) \n (dy) \Big| \cF^*_{r-} \Big] dr,
\end{align*}
where \(G \in \mathcal{B}(\bR^d)\). Thanks to Condition \ref{cond: main1} (i) -- (iii) and Lemma \ref{lem: convex combinations}, we conclude that \((\llambda \otimes Q)\)-a.e. \((d B^Q / d \llambda, d C^Q / d \llambda, d \nu^Q / d \llambda) \in \Theta (\cdot + t, X)\). Consequently, \cite[Lemma 2.9]{jacod80} yields that \(Q \circ X^{-1} \in \cC (t, \omega)\). This completes the proof.
\end{proof}

\begin{proof}[Proof of Theorem \ref{theo: connection to control rules}]
Notice that 
\[
\big\{ Q \circ X^{-1} \colon Q \in \mathcal{P}^W (t, \omega) \big\} \subset \big\{ Q \circ X^{-1} \colon Q \in \mathcal{P}^R (t, \omega) \big\}.
\]
Hence, the claimed equality follows from Lemmata~\ref{lem: v leq vW} and \ref{lem: vR leq v}.
\end{proof}

\section{Proofs for the Regularity Results} \label{sec: pf regularity}

In this section we prove our main Theorems \ref{theo: value function upper semi} and \ref{theo: value function lower semi} on the regularity of the value function \(v\). Our proofs rely on the relation of \(v\) to its control counterparts \(v^R\) and \(v^W\). We show in Section \ref{sec: vW upper semi} that \(v^R\) is upper semicontinuous and in Section \ref{sec: vR lower semi} that \(v^W\) is lower semicontinuous. By Theorem~\ref{theo: connection to control rules}, these regularity properties transfer to our value function~\(v\).

Let us outline the argument. We argue in the spirit of Berge's maximum theorem (\cite[Lemma~17.29, Lemma~17.30, Theorem~17.31]{charalambos2013infinite}).
More precisely, we show in Proposition \ref{prop: PR closed} that the correspondence \( \cP^R\) has closed graph. This is a key step in order to verify the sequential compactness property of \(\cP^R\) in Proposition \ref{prop: upper hemi}. 
By \cite[Theorem~17.20]{charalambos2013infinite}, this is equivalent to \(\cP^R\) being upper hemicontinuous with compact values. Then, the proof of Theorem \ref{theo: value function upper semi} can be seen as a stochastic version of Berge's maximum theorem, where upper semicontinuity of the input function can be relaxed to \(\cC(t,\omega)\)-a.s. upper semicontinuity by virtue of the continuous mapping theorem. To get the closed graph, we employ martingale problem techniques. This methodology is inspired by the approach from the seminal paper~\cite{nicole1987compactification}. 

Similarly, we prove in Proposition \ref{prop: PW lower hemi} below that the correspondence \(\cP^W\) is lower hemicontinuous, see \cite[Theorem 17.21]{charalambos2013infinite}. Using again the continuous mapping theorem, this gives access to the lower semicontinuity of the value function for \(\cC(t,\omega)\)-a.s. lower semicontinuous input functions.

\subsection{Proof of upper semicontinuity of the value functions: Theorem \ref{theo: value function upper semi}} \label{sec: vW upper semi}
 We start with two preliminary results before we present the proof of Theorem~\ref{theo: value function upper semi}.

\begin{proposition} \label{prop: PR closed}
	Suppose that Condition \ref{cond: main2} holds and take \(T > 0\).
	Let \((t^n, \omega^n)_{n = 0}^\infty \subset [0, T] \times D([0, T]; \bR^d)\) be a sequence such that \((t^n, \omega^n) \to (t^0, \omega^0)\).\footnote{Recall that we endow \([0, T] \times D([0, T]; \bR^d)\) with the topology induced by the pseudometric \(\mathsf{d}\) (and that we identify elements \((t, \omega), (s, \alpha)\) such that \(\mathsf{d} ((t, \omega), (s, \alpha)) = 0\), which turns \(\mathsf{d}\) into a metric).}
	Then, for every sequence \((P^n)_{n = 0}^\infty\) of probability measures on \((\Omega \times \m, \cF \otimes \mathcal{M})\) such that \(P^n \in \mathcal{P}^R (t^n, \omega^n)\) for \(n \in \mathbb{N}\) and \(P^n \to P^0\) weakly, it holds that \(P^0 \in \mathcal{P}^R (t^0, \omega^0)\).
\end{proposition}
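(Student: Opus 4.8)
The plan is to verify the two defining properties of a relaxed control rule with initial value $(t^0,\omega^0)$ for the limit measure $P^0$: first, that $P^0(X=\omega^0 \text{ on }[0,t^0])=1$; second, that for every $g\in C^2_b(\bR^d;\bR)$ the process $(C_s(g))_{s\geq t^0}$ is a local $P^0$-martingale for the filtration $(\cG_s)_{s\geq 0}$. The first property is relatively soft: the map $(\omega,m)\mapsto \omega$ is continuous from $\Omega\times\m$ to $\Omega$, and evaluation of a \cadlag path at a continuity point is continuous in $J_1$; using that $X$ is $\cC(t^n,\omega^n)$-a.s.\ (hence $P^n$-a.s., since $P^n\circ X^{-1}\in\cC(t^n,\omega^n)$ by Lemma \ref{lem: vR leq v}) continuous at fixed times together with the convergence $(t^n,\omega^n)\to(t^0,\omega^0)$ in the pseudometric $\d$, one passes to the limit to obtain $P^0(X=\omega^0\text{ on }[0,t^0))=1$, and right-continuity closes the interval at $t^0$. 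The genuine work is in the martingale property.

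For the martingale property I would use the standard route: since $P^n\in\cP^R(t^n,\omega^n)$, the processes $C_\cdot(g)$ are local martingales, and thanks to Condition \ref{cond: main1}(iv) embedded in Condition \ref{cond: main2} (boundedness of $b,\sigma,k$ and the tightness condition \eqref{eq: tightness in cond}) together with $g\in C^2_b$, one upgrades them to genuine martingales that are uniformly (in $n$) bounded on compact time intervals. It then suffices to show, for $0\le t^0\le s\le u$, bounded $\cG_s$-measurable continuous test functionals $h$ on $\Omega\times\m$, that $E^{P^0}[(C_u(g)-C_s(g))\,h]=0$, knowing $E^{P^n}[(C_u(g)-C_s(g))\,h_n]=0$ for the shifted times $s\vee t^n$, $u\vee t^n$. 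The key obstacle is continuity of the functional $(\omega,m)\mapsto C_u(g)(\omega,m)=g(\omega(u))-\int_0^u\!\int \mathcal{L}g(f,r,\omega,\omega(r))\,m(dr,df)$ along the convergent sequence: the drift integrand $\mathcal{L}g$ is only continuous in the control variable $f$ (Condition \ref{cond: main1}(iii)) and merely measurable in $(r,\omega)$, so the integral functional is not jointly continuous on $\Omega\times\m$. This is precisely where Condition \ref{cond: main2}, i.e.\ \eqref{eq: conv cond 2.8}, enters: it provides exactly the $L^1(d\llambda)$-modulus of continuity of $\sup_{f\in F}\mathcal{L}g(f,\cdot,\omega^n,\omega^n(\cdot))$ needed to control the difference between $\int_0^u\!\int\mathcal{L}g(f,r,\omega^n,\omega^n(r))\,m(dr,df)$ and the analogous expression with $\omega^0$, uniformly in $m\in\m$; combined with the vague convergence handling the smooth, $f$-continuous parts and with the $J_1$-a.s.\ continuity of $\omega\mapsto\omega(u)$ at fixed $u$ under the relevant laws, one gets $C_u(g)(\omega^n,m_n)\to C_u(g)(\omega^0,m_0)$ along $P^0$-a.e.\ realizations, and then passes to the limit in the martingale identity by uniform integrability.

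The remaining care concerns the mismatch of evaluation times $s\vee t^n$ versus $s$ (resp.\ $u$): since $t^n\to t^0\le s$, eventually $s\vee t^n=s$ and $u\vee t^n=u$ for $n$ large, so this causes no difficulty. One also has to verify that $h_n\to h$ in the appropriate sense, which follows by choosing $h$ depending on finitely many coordinates $X_{r_1},\dots,X_{r_j}$ (with $r_i\le s$ continuity points) and finitely many $M_{r_i}(\phi)$, which is enough to generate $\cG_s$; the $M_{r_i}(\phi)$ are vaguely continuous by construction since $\phi\in C_c$, and the $X_{r_i}$ are handled as above. Thus the main obstacle is the drift-continuity issue resolved by \eqref{eq: conv cond 2.8}, and the rest is bookkeeping with uniform integrability, the continuous mapping theorem, and the density of finite-dimensional functionals.
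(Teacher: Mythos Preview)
Your overall strategy---establish $P^0$-a.s.\ continuity of $(\alpha,m)\mapsto C_u(g)(\alpha,m)$ via the splitting into a vague-convergence term handled by the $f$-continuity of $\mathcal{L}g$ and a term controlled uniformly in $m$ by Condition~\ref{cond: main2}, then pass to the limit in the martingale identity for a dense set of test times---is the same route the paper takes.

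There is, however, a genuine gap in your treatment of the initial condition. The claim that ``right-continuity closes the interval at $t^0$'' is incorrect: from $P^0(X=\omega^0\text{ on }[0,t^0))=1$ one obtains only $X_{t^0-}=\omega^0(t^0-)$ by taking left limits; right-continuity of $X$ at $t^0$ concerns $\lim_{s\searrow t^0}X_s$ and says nothing about the relation between $X_{t^0}$ and values on $[0,t^0)$. If $X$ (under $P^0$) or $\omega^0$ has a jump at $t^0$, your argument does not pin down $X_{t^0}$. Moreover, your appeal to ``$\cC(t^n,\omega^n)$-a.s.\ continuity at fixed times'' is misplaced: the continuous mapping theorem for $P^n\to P^0$ requires $P^0$-a.s.\ continuity of the functional, not $P^n$-a.s.

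The paper resolves this by reversing the order: it first establishes the $P^0$-martingale property of
\[
\K^0_s := g(X_s) - g(\omega^0(t^0)) - \int_{t^0}^{s}\!\!\int_F \mathcal{L}g(f,r,X,X_r)\,M(dr,df), \quad s\ge t^0,
\]
for $s$ in the $P^0$-continuity set $D$ (and then for all $s\ge t^0$ via the downward martingale theorem and right-continuity), noting that this definition uses the deterministic value $g(\omega^0(t^0))$ rather than $g(X_{t^0})$. Only afterwards does it deduce $P^0(X_{t^0}=\omega^0(t^0))=1$ from
\[
E^{P^0}\big[g(X_{t^0})\big]-g(\omega^0(t^0)) = E^{P^0}[\K^0_{t^0}] = E^{P^0}[\K^0_R] = \lim_n E^{P^n}[\K^n_R] = \lim_n E^{P^n}[\K^n_{t^n}] = 0
\]
for any $R\in D$, $R>t^0$, where $\K^n$ is defined analogously with $(t^n,\omega^n)$ and one uses $|\K^n_R-\K^0_R|\le C(|g(\omega^n(t^n))-g(\omega^0(t^0))|+|t^n-t^0|)$. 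Since this holds for all $g\in C^2_b$, the endpoint follows. You should adopt this indirect route; the direct argument does not work.

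A minor related point: your assertion that ``eventually $s\vee t^n=s$'' requires $s>t^0$ strictly, so you should restrict the martingale identity to $t^0<s<u$ with $s,u\in D$ first and then extend by right-continuity---which is exactly what the paper does.
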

\begin{proof}
	It is well-known (\cite[Lemma 7.7, p. 131]{EK}) that there exists a set \(D \subset \bR_+\) with countable complement such that \(X_s\) is \(P^0\)-a.s. continuous for all \(s \in D\). Consequently, by the continuous mapping theorem and the right-continuity of \(X\), \(P^0\)-a.s. \(X_s = \omega^0 (s)\) for all \(s \in [0, t^0)\). Next, take a function \(g \in C^2_b (\bR^d; \bR)\) and define 
	\begin{align*}
		\K^n_R := g (X_R) - g (\omega^n (t^n)) - \int_{t^n}^R \int_F \mathcal{L} g (f, s, X, X_s) M(ds, df)
	\end{align*}
	for \(n \in \mathbb{Z}_+\) and \(R \in D\) with \(R \geq t^n\). By Taylor's theorem, and the definition of a truncation function,
	we have, for all \(x, y \in \bR^d\),
	\[
	\big| g (x + y) - g(x) - \langle \nabla g (x), h (y) \rangle \big| \leq C \|\nabla^2 g \|_\infty (\|y\|^2 \wedge 1),
	\]
	where the constant \(C > 0\) depends on \(g\) and the truncation function \(h\).
	Consequently, using the first part of Condition \ref{cond: main1}, we obtain
	\begin{align} \label{eq: conv Mn}
		|\K^n_R - \K^0_R| \leq C \big[ | g (\omega^n (t^n)) - g (\omega^0 (t^0)) | + |t^n - t^0| \big], 
	\end{align}
	where \(C > 0\) is a deterministic constant that only depends on \(g\) and the bounds for the coefficients provided by Condition \ref{cond: main1}. 
 Next, we show that the function \((\alpha, m) \mapsto \K^0_R (\alpha, m)\) is \(P^0\)-a.s. continuous. It suffices to prove continuity of 
 \[
 (\alpha, m) \mapsto \int_{t^0}^R \int_F \mathcal{L} g (f, s, \alpha, \alpha (s)) m(ds, df) .
 \]
 In this regard, consider a sequence \((\alpha^n, m^n)_{n = 0}^\infty \subset \Omega  \times \m \) such that \((\alpha^n, m^n) \to (\alpha^0, m^0)\). We obtain
 \begin{align*}
 \Big| \int_{t^0}^R & \int_F \mathcal{L} g (f, s, \alpha^0, \alpha^0(s)) m^0(ds, df) - \int_{t^0}^R \int_F \mathcal{L} g (f, s, \alpha^n, \alpha^n(s)) m^n(ds, df)\Big| 
 \\&\leq  \Big| \int_{t^0}^R \int_F \mathcal{L} g (f, s, \alpha^0, \alpha^0(s)) m^0(ds, df) - \int_{t^0}^R \int_F \mathcal{L} g (f, s, \alpha^0, \alpha^0(s)) m^n(ds, df)\Big| \\& \qquad +  \Big| \int_{t^0}^R \int_F \mathcal{L} g (f, s, \alpha^0, \alpha^0(s)) m^n(ds, df) - \int_{t^0}^R \int_F \mathcal{L} g (f, s, \alpha^n, \alpha^n(s)) m^n(ds, df)\Big| 
 \\&\leq  \Big| \int_{t^0}^R \int_F \mathcal{L} g (f, s, \alpha^0, \alpha^0(s)) m^0(ds, df) - \int_{t^0}^R \int_F \mathcal{L} g (f, s, \alpha^0, \alpha^0(s)) m^n(ds, df)\Big| \\&\qquad+   \int_{t^0}^R \int_F \big| \mathcal{L} g (f, s, \alpha^0, \alpha^0(s))- \mathcal{L} g (f, s, \alpha^n, \alpha^n(s)) \big| m^n(ds, df)
  \\&\leq  \Big| \int_{t^0}^R \int_F \mathcal{L} g (f, s, \alpha^0, \alpha^0(s)) m^0(ds, df) - \int_{t^0}^R \int_F \mathcal{L} g (f, s, \alpha^0, \alpha^0(s)) m^n(ds, df)\Big| 
  \\&\qquad+   \int_{t^0}^R \sup_{f \in F} \big| \mathcal{L} g (f, s, \alpha^0, \alpha^0(s)) -  \mathcal{L} g (f, s, \alpha^n, \alpha^n(s)) \big| ds
 \\&=: I_n + II_n.
 \end{align*}
 For every \(s \in [0, R]\), the map \(f \mapsto \mathcal{L} g (f, s, X^0, X^0_s)\) is continuous by Condition~\ref{cond: main1}~(iii). Thus, \(I_n \to 0\) follows from Condition \ref{cond: main1}~(iv) and \cite[Corollary~2.9]{SPS_1981__15__529_0}. Using Condition~\ref{cond: main2}, we obtain \(II_n \to 0\). Hence, \((\alpha, m) \mapsto \K^0_T (\alpha, m)\) is \(P^0\)-a.s. continuous.
 In summary, using the continuous mapping theorem and \eqref{eq: conv Mn}, we obtain, for every bounded \(P^0\)-a.s. continuous function \(\g\) on \(\Omega \times \m\), that
	\[
	E^{P^0} \big[ \K^0_T \, \g \big] = \lim_{n \to \infty} E^{P^n} \big[ \K^0_T \, \g \big] = \lim_{n \to \infty} E^{P^n} \big[ \K^n_T\, \g \big].
	\]
	Consequently, for every \(t^0 < S < R\) with \(S, R \in D\), and every \(\cG_S\)-measurable bounded \(P^0\)-a.s. continuous \(\g\), we get 
	\begin{align*}
		E^{P^0}\big[ \big(\K^0_R - \K^0_S\big)\, \g \big] = \lim_{n \to \infty} E^{P^n} \big[ \big(\K^n_R - \K^n_S\big)\, \g\big] = 0.
	\end{align*}
	We conclude that \(P^0\)-a.s. \(E^{P^0} [ \K^0_R | \cG_S ] = \K^0_S\). As \(D^c\) is countable, the right-continuity of \(\K^0\) and the L\'evy--Doob downward theorem (\cite[Theorem II.51.1]{RW1}) yield, for all \(s \in [t^0, R)\), that \(P^0\)-a.s.
	\[
	E^{P^0} \big[ \K^0_R | \cG_{s+} \big] = \K^0_s,
	\]
	and, by the tower rule, \(P^0\)-a.s.
	\[
	E^{P^0} \big[ \K^0_R | \cG_{s} \big] = \K^0_s.
	\]
	Using again that \(s \mapsto \K^0_s\) is right-continuous and, in addition, (uniformly) bounded on compact subsets of \([t^0, \infty)\), it follows from the density of \(D\) in \(\bR_+\), and the dominated convergence theorem for conditional expectations, that \(P^0\)-a.s.
	\[
	E^{P^0} \big[ \K^0_{t} | \cG_{s} \big] = \K^0_s, 
	\]
	for \(t^0 \leq s < t\). This means that \((\K^0_s)_{s \geq t^0}\) is a \(P^0\)-martingale for the filtration \((\cG_t)_{t \geq 0}\). Finally, notice that, for every \(R \in D\) with \(R > t^0\), 
	\begin{align*}
	E^{P^0}\big[ g (X_{t^0})\big] - g (\omega^0 (t^0)) &= E^{P^0} \big[ \K^0_{t^0}\big] = E^{P^0} \big[ \K^0_{R}\big] \\&= \lim_{n \to \infty} E^{P^n} \big[ \K^n_R \big] = \lim_{n \to \infty} E^{P^n} \big[ \K^n_{t^n} \big] = 0.
	\end{align*}
	As this holds for all \(g \in C^2_b (\bR^d; \bR)\), we conclude that \(P^0 (X_{t^0} = \omega^0 (t^0)) = 1\). This completes the proof of \(P^0 \in \mathcal{P}^R (t^0, \omega^0)\) and therefore, of the proposition.
\end{proof}

\begin{remark}
    As the proof of Proposition \ref{prop: PR closed} shows, the second part of Condition \ref{cond: main2} can be weakened to the following assumption: for every \(t \in \bR_+\), \(g \in C^2_b (\bR^d; \bR)\), and every compact subset \(\mathscr{M} \subset \m\) the family of functions
\[
\Omega \ni \omega \mapsto \int_0^t \int_F \mathcal{L} g (f, s, \omega, \omega (s)) m(ds,df), \quad m \in \mathscr{M},
\]
is equicontinuous.
\end{remark}

\begin{proposition} \label{prop: upper hemi}
Suppose that Condition \ref{cond: main2} holds and take \(T > 0\). Let \((t^n, \omega^n)_{n = 0}^\infty \subset [0, T] \times D([0, T]; \bR^d)\) be a sequence such that \((t^n, \omega^n) \to (t^0, \omega^0)\). Then, every sequence \((P^n)_{n = 1}^\infty\) with \(P^n \in \mathcal{P}^R (t^n, \omega^n)\) has an accumulation point in \(\mathcal{P}^R (t^0, \omega^0)\).
\end{proposition}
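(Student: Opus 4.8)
The plan is to deduce the claim from the closed-graph property established in Proposition \ref{prop: PR closed}, via a tightness argument and Prohorov's theorem. Since \(\m\) is compact and metrizable, its marginals are automatically tight, so \((P^n)_{n = 1}^\infty\) will be relatively compact in \(\mathfrak{P}(\Omega \times \m)\) once I show that the sequence of \(\Omega\)-marginals \(Q^n := P^n \circ X^{-1}\) is relatively compact in \(\mathfrak{P}(\Omega)\) (tightness of both families of marginals implies tightness on the product).

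For this tightness I would use that Condition \ref{cond: main2} includes Condition \ref{cond: main1}, so Lemma \ref{lem: vR leq v} gives \(Q^n \in \cC(t^n, \omega^n)\); that is, under \(Q^n\) the canonical process coincides with \(\omega^n\) on \([0, t^n]\) and \(X_{\cdot + t^n}\) is a semimartingale whose differential characteristics lie \((\llambda \otimes Q^n)\)-a.e.\ in \(\Theta(\cdot + t^n, X)\). By the definition of the pseudometric \(\d\), the stopped paths \(\omega^n(\cdot \wedge t^n)\) converge uniformly to \(\omega^0(\cdot \wedge t^0)\), hence form a relatively compact subset of \(\Omega\), and the initial values \(\omega^n(t^n)\) converge. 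For the continuation after \(t^n\), Condition \ref{cond: main1}~(iv) supplies, for every horizon \(N > 0\), a deterministic constant \(C_N\) bounding \(\|b\| + \|\sigma\| + \int (\|k\|^2 \wedge 1)\, \n(dz)\) on \(F \times \of 0, N\gs\), together with the uniform tail bound \eqref{eq: tightness in cond}. Consequently the first characteristic \(B^{Q^n}_{\cdot + t^n}\), the second characteristic \(C^{Q^n}_{\cdot + t^n}\), and the total mass process \(t \mapsto \int_0^t \int (\|x\|^2 \wedge 1)\, \nu^{Q^n}_{\cdot + t^n}(ds, dx)\) are uniformly Lipschitz in time with deterministic constants, while large jumps carry uniformly small mass; the classical tightness criterion for laws of semimartingales (see, e.g., \cite[Theorem~VI.4.18]{JS}; cf.\ also the compactness arguments in \cite{hol16, neufeld}) then yields tightness of the laws of \(X_{\cdot + t^n}\), and hence — by concatenating with the convergent frozen initial segments — tightness of \((Q^n)\) on \(D([0,N]; \bR^d)\) for each \(N\), and therefore on \(\Omega = D(\bR_+; \bR^d)\).

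Finally I would invoke Prohorov's theorem to extract a subsequence \((P^{n_k})\) converging weakly to some \(P^0 \in \mathfrak{P}(\Omega \times \m)\); since \((t^{n_k}, \omega^{n_k}) \to (t^0, \omega^0)\) and \(P^{n_k} \to P^0\), Proposition \ref{prop: PR closed} then gives \(P^0 \in \mathcal{P}^R(t^0, \omega^0)\), which is the required accumulation point. The hard part is the tightness estimate: one must verify the Aldous modulus-of-continuity condition for \(X_{\cdot + t^n}\) uniformly in \(n\) from the merely locally bounded, path-dependent characteristics and keep large jumps from escaping by means of \eqref{eq: tightness in cond}; the moving pinning times \(t^n\) enter only through the uniform convergence of the frozen initial segments, which is immediate from the definition of \(\d\).
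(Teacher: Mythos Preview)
Your overall strategy---tightness of the \(\Omega\)-marginals plus compactness of \(\m\), then Prohorov, then the closed-graph Proposition~\ref{prop: PR closed}---matches the paper's. The gap is in the step you treat as routine: ``by concatenating with the convergent frozen initial segments --- tightness of \((Q^n)\)''. This is precisely the point where the proof requires real work.

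The difficulty is that the \(\omega^n\) are arbitrary c\`adl\`ag paths, so \(Q^n = P^n \circ X^{-1}\) is \emph{not} a global semimartingale law (unless \(\omega^n\) happens to have finite variation on \([0,t^n]\)); you therefore cannot apply semimartingale tightness criteria such as \cite[Theorem~VI.4.18]{JS} directly to \(X\). You correctly shift to \(X_{\cdot + t^n}\) (the paper uses the unshifted \(Y^n := X_{\cdot \vee t^n}\)), and tightness of these is indeed obtained from \cite[Theorem~VI.5.10]{JS} via Condition~\ref{cond: main1}~(iv). But passing from tightness of the post-\(t^n\) piece and uniform convergence of the pre-\(t^n\) piece to tightness of the full process \(X\) amounts to controlling the map \((\alpha, Y) \mapsto \alpha + Y\) on \(\Omega \times \Omega\). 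This map is \emph{not} continuous for the Skorokhod \(J_1\) topology (see \cite[Remark~VI.1.22]{JS}), and the Skorokhod modulus \(w'\) is not subadditive, so the ``concatenation'' you invoke is not immediate even for tightness. The moving junction times \(t^n\) compound the issue: a jump of \(Y^n\) just after \(t^n\) can interact badly with the last jump of \(\omega^n\) before \(t^n\).

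The paper resolves this by extracting a weak limit \(\overline{Q}^0\) of \(P^n \circ (\alpha^n, Y^n, M)^{-1}\), then proving that addition is \(\overline{Q}^0\)-a.s.\ continuous by appealing to \cite[Theorem~4.1, Lemma~4.3]{whitt}. For this one must show that the second coordinate has no fixed times of discontinuity under \(\overline{Q}^0\), which is done via a Portmanteau argument and the uniform bound on the third characteristic from Condition~\ref{cond: main1}~(iv). Only then does the continuous mapping theorem yield convergence (and hence tightness) of \(P^n\) on \(\Omega \times \m\). Your proposal would be complete once you supply this argument; without it, the concatenation step is a genuine gap.
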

\begin{proof}
As \(\omega^1, \omega^2, \dots\) are arbitrary elements of \(\Omega\), the measures \(P^1, P^2, \dots\) are not necessarily laws of {\em global} semimartingales (this can only be the case when \(\omega^1, \omega^2, \dots\) are of finite variation on \([0, t_1], [0, t_2], \dots\)). Hence, we cannot directly apply tightness criteria for semimartingale laws.
We resolve this issue as follows. First, we cut off the initial values and prove tightness for this modified sequence.
In a second step, we deduce tightness of the original sequence by continuously adding the initial values.

\smallskip
\emph{Step 1}. We set \(Y^n := X_{\cdot \vee t^n}\) and \(\mathbb{B}^n := (\Omega \times \m, \mathcal{F} \otimes \mathcal{M}, (\cG_t)_{t \geq 0}, P^n)\).
By \cite[Theorem~II.2.42]{JS}, \(Y^n\) is a semimartingale on the stochastic basis \(\mathbb{B}^n\) with starting value \(Y_0 = \omega^n (t^n)\) and characteristics 
\begin{align*}
B^{P^n} &= \int_{t^n}^{\cdot \vee t^n}\int b (f, s, X) M (ds, df), \\
C^{P^n} &= \int_{t^n}^{\cdot \vee t^n} \int \sigma \sigma^* (f, s, X) M (ds, df), \\
\nu^{P^n} ( [0, \cdot\,] \times G) &= \int_{t^n}^{\cdot \vee t^n}\int \1_{G \backslash \{0\}} ( k (f, s, X, y)) \n (dy) M (ds, df), \quad G \in \mathcal{B}(\bR^d).
\end{align*}
In the following, we want to apply \cite[Theorem VI.5.10]{JS} to prove that the family \(\{Q^n := P^n \circ (Y^n)^{-1} \colon n \in \mathbb{N}\}\) is tight. We check its prerequisites. For \(N > 0\), the final part of Condition~\ref{cond: main1} (iv) yields that
\begin{equation} \label{eq: JS (ii)}
\begin{split}
\sup_{n \in \mathbb{N}} P^n (\nu^{P^n} &([0, N] \times \{x \in \bR^d \colon \|x\| > R\}) > \varepsilon) 
\\&\leq \tfrac{N}{\varepsilon} \sup \big\{ \n (\|k (f, s, \alpha, \cdot ) \| > R ) \colon (f, s, \alpha) \in F \times \of 0, T\gs \big\} \to 0 \text{ as } R \to \infty.
\end{split}
\end{equation}
Furthermore, using Condition \ref{cond: main1} (iii) and (iv), for every \(T > 0\), we obtain the existence of a constant \(C = C_T > 0\) such that, for all \(n \in \mathbb{N}\) and \(s \in [0, T]\), 
\begin{align} \label{eq: JS (iii)+(iv)}
\sum_{k = 1}^d \big[\on{Var}_s ( B^{P^n, k} ) + C^{P^n, kk}_s \big] + \int (\|x\|^2 \wedge 1) \nu^{P^n} ([0, s] \times dx) \leq C s,
\end{align}
where \(\on{Var}(\cdot)\) denotes the variation process. Moreover, notice that 
\begin{align} \label{eq: initial value bounded}
\text{the set \(\{\omega^n (t^n) \colon n \in \mathbb{N}\}\) is bounded},
\end{align}
since \(\omega^n (t^n) \to \omega^0 (t^0)\) by hypothesis. 
Thanks to \eqref{eq: JS (ii)} - \eqref{eq: initial value bounded}, we deduce from \cite[Theorem~VI.5.10, Remark~VI.5.13]{JS} that \(\{Q^n \colon n \in \mathbb{N}\}\) is tight.

{\em Step 2.} 
Define \(\alpha^n := \omega^n (\cdot \wedge t^n) - \omega^n (t^n)\) and \(\overline{Q}^n := P^n \circ (\alpha^n, Y^n, M)^{-1}\), where the latter are considered to be probability measures on \((\Omega \times \Omega \times \m, \cF \otimes \cF \otimes \mathcal{M})\). Here, we use the product topology for \(\Omega \times \Omega \times \m\). At this point, we stress that \(\Omega \times \Omega \ni (\omega, \alpha) \mapsto \omega + \alpha\) is not continuous (cf. \cite[Remark~VI.1.22]{JS}). In the remainder of this proof, we establish an almost sure continuity property that follows from results in \cite{whitt}. 
Notice that \(\alpha^n \to \alpha^0\) uniformly. 
Using this observation, Step~1 and the compactness of \(\m\), we obtain the existence of a weakly convergent subsequence \((\overline{Q}^{N_n})_{n = 1}^\infty \subset (\overline{Q}^n)_{n = 1}^\infty\) whose limit we denote by \(\overline{Q}^0\). Clearly, \(\overline{Q}^0 (d \omega \times \Omega \times \m) = \delta_{\alpha^0} (d \omega)\), which implies that the first two coordinates are independent under \(\overline{Q}^0\). Next, we explain that the second coordinate has no fixed times of discontinuity under \(\overline{Q}^0\). We write \(X\) for the projection to the second coordinate. Take \(S, \varepsilon > 0\). There exists a set \(D \subset \bR_+\), with countable complement, such that \(X_t\) is \(\overline{Q}^0\)-a.s. continuous for every \(t \in D\) (\cite[Lemma~7.7, p. 131]{EK}). Take \(s, t \in D\) such that \(s < S < t\). Thanks to \cite[Proposition~VI.2.4]{JS}, the map \(\sup_{s < r \leq t} \|\Delta X_r\|\) is \(\overline{Q}^0\)-a.s. continuous. Hence, by the continuous mapping and the Portmanteau theorem, we obtain
\begin{align*}
\overline{Q}^0 \big( \|\Delta X_S\| > \varepsilon \big) &\leq \overline{Q}^0 \Big( \sup_{s < r \leq t} \|\Delta X_r\| > \varepsilon\Big)
\\&\leq \liminf_{n \to \infty} \overline{Q}^{N_n} \Big( \sup_{s < r \leq t} \|\Delta X_r\| > \varepsilon \Big) 
\\&\leq \liminf_{n \to \infty} \overline{Q}^{N_n} \Big( \sum_{s < r \leq t} \1_{\{\|\Delta X_r\| > \varepsilon\}} \geq 1 \Big) 
\\&\leq \liminf_{n \to \infty} E^{\overline{Q}^{N_n}} \Big[ \sum_{s < r \leq t} \1_{\{\|\Delta X_r\| > \varepsilon\}} \Big]
\\&= \liminf_{n \to \infty} E^{\overline{Q}^{N_n}} \big[ \nu^{P^{N_n}} ((s, t] \times \{x \in \bR^d \colon \|x\| > \varepsilon\}) \big]
\\&\leq \frac{(t - s)}{\varepsilon^2 \wedge 1} \sup \Big\{ \int (\|k (f, r, \omega, z)\|^2 \wedge 1) \n (dz) \colon (f, r, \omega) \in F \times \of 0, N\gs\Big\}, 
\end{align*}
where \(N > t\). Notice that the last term is finite by Condition \ref{cond: main1} (iv).
As \(D^c\) is countable, we can take \(s \nearrow S\) and \(t \searrow S\) and obtain that 
\[
\overline{Q}^0 \big( \|\Delta X_S\| > \varepsilon \big)  = 0.
\]
We deduce from \cite[Theorem~4.1, Lemma~4.3]{whitt} that the map \[\Omega \times \Omega \times \m \ni (\alpha, \omega, m) \mapsto \mathfrak{A} (\alpha, \omega, m) := (\alpha + \omega, m)\] is \(\overline{Q}^0\)-a.s. continuous. Consequently, the continuous mapping theorem yields that 
\begin{align*}
	\overline{Q}^{N_n} \circ \mathfrak{A}^{-1} \to \overline{Q}^{0} \circ \mathfrak{A}^{-1}.
\end{align*}
Notice that \(\overline{Q}^{N_n} \circ \mathfrak{A}^{-1} = P^{N_n}\) and therefore, that \(\overline{Q}^{0} \circ \mathfrak{A}^{-1} \in \mathcal{P}^R (t^0, \omega^0)\) by Proposition \ref{prop: PR closed}. 
This completes the proof.
\end{proof}

\begin{proof}[Proof of Theorem \ref{theo: value function upper semi}]
Take \((t^n, \omega^n)_{n = 0}^\infty \subset [0, T] \times D([0, T]; \bR^d)\) such that \((t^n,\omega^n) \to (t^0, \omega^0)\) and let \(\psi \colon \Omega \to \bR\) be a bounded \(\cC (t^0, \omega^0)\)-a.s. upper semicontinuous function. 
Recall from Theorem~\ref{theo: connection to control rules} that 
\[
\cC (t^0, \omega^0) = \big\{ Q \circ X^{-1} \colon Q \in \mathcal{P}^R (t^0, \omega^0) \big\}.
\]
Hence, as \(X\) is continuous on \(\Omega \times \m\), \(\psi \circ X\) is \(\mathcal{P}^R (t^0, \omega^0)\)-a.s. upper semicontinuous. 
Pick a subsequence \((N_n)_{n = 1}^\infty\) such that 
\[
\limsup_{n \to \infty} v^R (t^n, \omega^n) = \lim_{n \to \infty} v^R (t^{N_n}, \omega^{N_n}).
\]
Further, for every \(n \in \mathbb{N}\), there exists a measure \(Q^{N_n} \in \mathcal{P}^R (t^{N_n}, \omega^{N_n})\) such that 
\[
v^R (t^{N_n}, \omega^{N_n}) - \frac{1}{N_n} \leq E^{Q^{N_n}} \big[ \psi (X) \big] \leq v^R (t^{N_n}, \omega^{N_n}).
\]
Hence, 
\[
\limsup_{n \to \infty} v^R (t^n, \omega^n) = \lim_{n \to \infty} E^{Q^{N_n}} \big[ \psi (X) \big].
\]
By Proposition \ref{prop: upper hemi}, passing to a subsequence if necessary, there exists a measure 
\(Q^0 \in \mathcal{P}^R (t^0, \omega^0)\)
such that \(Q^{N_n} \to Q^0\) weakly.
Consequently, by a version of the continuous mapping theorem (\cite[Example 17, p. 73]{pollard}), as \(\psi \circ X\) is \(Q^0\)-a.s. upper semicontinuous, 
\[
\limsup_{n \to \infty} v^R (t^n, \omega^n) = \lim_{n \to \infty} E^{Q^{N_n}} \big[ \psi (X) \big] \leq E^{Q^0} \big[ \psi (X)\big] \leq v^R (t^0, \omega^0).
\]
This shows that \(v^R\) is upper semicontinuous at \((t^0, \omega^0)\). As \(v = v^R\) by Theorem \ref{theo: connection to control rules}, the proof is complete. 
\end{proof}

\subsection{Proof of lower semicontinuity of the value function: Theorem \ref{theo: value function lower semi}} \label{sec: vR lower semi}

The key observation is given by the following. 
\begin{proposition} \label{prop: PW lower hemi}
Suppose that the Conditions \ref{cond: main1} and \ref{cond: main3} hold. Then, for every \(T > 0\), every sequence \((\omega^n, t^n)_{n = 0}^\infty \subset [0, T]
	\times D([0, T]; \bR^d)\) such that \((t^n, \omega^n) \to (t^0, \omega^0)\) and every measure \(Q^0 \in \mathcal{P}^W (t^0, \omega^0)\), there exists a sequence \((Q^n)_{n = 1}^\infty\) such that \(Q^n \in \mathcal{P}^W (t^n, \omega^n)\) and \(Q^n \to Q^0\) weakly.
\end{proposition}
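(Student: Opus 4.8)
The plan is to realise $Q^0$ as the law of a \emph{controlled} stochastic differential equation driven by a fixed Brownian motion and a fixed Poisson random measure, and then to produce $Q^n$ by solving the \emph{same} equation --- with the same noise and the same control --- but with the initial value $(t^0,\omega^0)$ replaced by $(t^n,\omega^n)$; the weak convergence $Q^n\to Q^0$ will follow from an $L^2$-stability estimate for this SDE, using that $\d((t^n,\omega^n),(t^0,\omega^0))\to0$ forces $t^n\to t^0$ and the stopped paths $\omega^n(\cdot\wedge t^n)$ to converge \emph{uniformly} to $\omega^0(\cdot\wedge t^0)$.

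\textbf{Step 1 (representation).} Fix $Q^0\in\cP^W(t^0,\omega^0)$ and write its control as $M(ds,df)=\delta_{u_s}(df)\,ds$ for some $(\cG_s)$-predictable $F$-valued process $u$. By \cite[Theorem~II.2.42]{JS}, under $Q^0$ the shifted process $X_{\cdot+t^0}$ is a semimartingale whose differential characteristics at time $s\ge t^0$ are $(b,\sigma\sigma^*,\n\o k(\cdot)^{-1})(u_s,s,X)$; Condition~\ref{cond: main1}~(iv) together with the bound $\|k\|\le\beta$, $\int(\beta\vee\beta^2)\,d\n<\infty$, of Condition~\ref{cond: main3} guarantees that all the integrals involved are finite and that the jumps of $X_{\cdot+t^0}$ are $(\llambda\otimes Q^0)$-integrable. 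A standard representation theorem for semimartingales (see e.g.\ \cite{jacod79}) then provides, after enlarging the probability space, an $r$-dimensional Brownian motion $W$ and a Poisson random measure $\mathfrak N$ on $\bR_+\times L$ with intensity $\llambda\otimes\n$, carried by a filtration $\widetilde\F$ to which $u$ is still predictable, such that for $t\ge t^0$
\[
X_t=\omega^0(t^0)+\int_{t^0}^{t}\widetilde b(u_s,s,X)\,ds+\int_{t^0}^{t}\sigma(u_s,s,X)\,dW_s+\int_{t^0}^{t}\!\!\int_L k(u_s,s,X,z)\,(\mathfrak N-\llambda\otimes\n)(ds,dz),
\]
where $\widetilde b(f,s,\alpha):=b(f,s,\alpha)+\int(k-h\circ k)(f,s,\alpha,z)\,\n(dz)$ is again bounded (Condition~\ref{cond: main1}~(iv)) and Lipschitz in the path on bounded sets (Condition~\ref{cond: main3}, using $\int\gamma\,d\n<\infty$).

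\textbf{Step 2 (transplantation and $Q^n\in\cP^W(t^n,\omega^n)$).} On the same space, keeping $W$, $\mathfrak N$ and $u$ fixed, solve for each $n\ge1$ the SDE
\[
X^n_t=\omega^n(t\wedge t^n)+\int_{t^n}^{t\vee t^n}\widetilde b(u_s,s,X^n)\,ds+\int_{t^n}^{t\vee t^n}\sigma(u_s,s,X^n)\,dW_s+\int_{t^n}^{t\vee t^n}\!\!\int_L k(u_s,s,X^n,z)\,(\mathfrak N-\llambda\otimes\n)(ds,dz).
\]
By Condition~\ref{cond: main3} (path-Lipschitz on bounded sets and $\|k\|\le\beta\in L^2(\n)$) together with the boundedness of $\widetilde b,\sigma$ from Condition~\ref{cond: main1}~(iv), a localisation-plus-Picard-iteration argument yields a unique $\widetilde\F$-adapted solution $X^n$, with $X^n=\omega^n$ on $[0,t^n]$. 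Set $M^n(ds,df):=\delta_{u_s}(df)\,ds$ and $Q^n:=\mathrm{Law}(X^n,M^n)$. Then $Q^n(M\in\m_0)=1$, and applying It\^o's formula to the SDE shows that, for every $g\in C^2_b(\bR^d;\bR)$, the process $g(X^n_\cdot)-\int_0^\cdot\int\mathcal{L} g(f,s,X^n,X^n_s)\,M^n(ds,df)$ is a true $\widetilde\F$-martingale after $t^n$ (a genuine martingale on compacts thanks to the boundedness in Condition~\ref{cond: main1}~(iv)); being adapted to the canonical filtration $(\cG_s)$ generated by $(X^n,M^n)$, it is then also a $(\cG_s)$-martingale after $t^n$, so $Q^n\in\cP^W(t^n,\omega^n)$.

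\textbf{Step 3 (convergence) and the main obstacle.} Since $M^n=M^0$ for all $n$, it suffices to prove $X^n\to X^0$ locally uniformly on $\bR_+$ in probability; this gives $(X^n,M^n)\to(X^0,M^0)$ in probability on $\Omega\times\m$ and hence $Q^n\to Q^0$ weakly. Writing $\tau_N$ for the first time $\|X^n\|\vee\|X^0\|$ exceeds $N$, the Burkholder--Davis--Gundy inequality and the Lipschitz bounds of Condition~\ref{cond: main3} give, for $t\le T$,
\[
E\Big[\sup_{s\le t\wedge\tau_N}\|X^n_s-X^0_s\|^2\Big]\le C_N\Big(\epsilon_n+\int_0^tE\Big[\sup_{r\le s\wedge\tau_N}\|X^n_r-X^0_r\|^2\Big]\,ds\Big),
\]
where $\epsilon_n$ collects $\sup_{r}\|\omega^n(r\wedge t^n)-\omega^0(r\wedge t^0)\|^2$ and a term of order $|t^n-t^0|$ arising --- via the boundedness of $\widetilde b,\sigma$ and $\int\beta^2\,d\n<\infty$ --- from comparing a frozen path with an evolving one on the interval between $t^n\wedge t^0$ and $t^n\vee t^0$; by $\d((t^n,\omega^n),(t^0,\omega^0))\to0$ one has $\epsilon_n\to0$. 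Gronwall's lemma yields $E[\sup_{s\le T\wedge\tau_N}\|X^n_s-X^0_s\|^2]\to0$ for each fixed $N$, and the a priori bound $\sup_nE[\sup_{s\le T}\|X^n_s\|^2]<\infty$ (again from Condition~\ref{cond: main1}~(iv) and $\sup_n\|\omega^n\|_\infty<\infty$) makes $\sup_nQ^n(\tau_N<T)\to0$ as $N\to\infty$; combining the two gives the required convergence. I expect the genuinely delicate points to be Step~1 --- realising the control $u$ and the driving noise on one common space so that the \emph{same} triple $(W,\mathfrak N,u)$ can be re-used for every $n$ --- and the bookkeeping in Step~3 forced by the mismatch $t^n\ne t^0$ together with the merely local (rather than global) Lipschitz hypothesis in Condition~\ref{cond: main3}, which makes the localisation via $\tau_N$ unavoidable.
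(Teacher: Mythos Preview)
Your proposal is correct and follows essentially the same route as the paper: realise $Q^0$ as the solution of a controlled SDE on an enlarged space carrying a Brownian motion and a Poisson random measure, re-solve the same SDE with the same noise and control but initial data $(t^n,\omega^n)$, and prove ucp-convergence $X^n\to X^0$ via a localised BDG/Gronwall estimate combined with the a priori bound $\sup_n E[\sup_{s\le T}\|X^n_s\|^2]<\infty$.

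Two small points where the paper is more explicit. First, your sentence ``write its control as $M(ds,df)=\delta_{u_s}(df)\,ds$ for some $(\cG_s)$-predictable $u$'' hides a measurable-selection step: $Q^0(M\in\m_0)=1$ only says that \emph{almost every} $m$ is of Dirac form, and extracting a jointly predictable $u$ requires a Filippov-type argument (the paper does this via the kernel disintegration of \cite{lackerSPA} and \cite[Theorem~18.17]{charalambos2013infinite}). Second, the paper writes the SDE with the truncation decomposition $h(k)\,(\mathfrak p-\mathfrak q)+(k-h(k))\,\mathfrak p$ rather than your fully compensated form with modified drift $\widetilde b$; your version is equivalent because $\|k\|\le\beta\in L^1\cap L^2(\n)$ under Condition~\ref{cond: main3}, which you correctly note. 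The passage from the $\widetilde\F$-martingale property to the $(\cG_s)$-martingale property under $Q^n$ is handled by the paper with a bare reference to \cite[Theorem~II.2.42]{JS}; your tower-property justification is the right mechanism.
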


\begin{proof}
	{\em Step 1.} By \cite[Lemma 3.2]{lackerSPA}, there exists a \((\mathcal{G}_t)_{t \geq 0}\)-predictable probability kernel \(\mathsf{k} \colon \bR_+ \times  \m \times \mathcal{B}(F) \to \mathbb{R}_+\) such that 
\[
m (ds, df) = \mathsf{k} (s, m, df)  ds.
\]
	We set \(G := \{ \delta_x \colon x \in F\}\) and, for some arbitrary \(f_0 \in F\), 
	\[
	\pi (t,  m, df) := \begin{cases}\delta_{f_0} (df), & \text{if } \mathsf{k} (t, m, df) \not \in G,\\ \mathsf{k} (t, m, df),& \text{if } \mathsf{k} (t, m, df) \in G, \end{cases}
	\]
	for \((t, m) \in \bR_+ \times \mathbb{M}\). By the continuity of the map \(x \mapsto \delta_x\) and the compactness of \(F\), we deduce from \cite[Theorem 18.17]{charalambos2013infinite} that there exists a \((\mathcal{G}_t)_{t \geq 0}\)-predictable map \((t, m) \mapsto \gamma_t (m)\) such that
	\[
	\pi (t, m, df) = \delta_{\gamma_t (m)} (df)
	\]
	for all \((t, m) \in \bR_+ \times \m\).
	As \(Q^0 (M \in \m_0) = 1\), it follows that \((\llambda \otimes Q^0)\)-a.e. \(\mathsf{k} = \delta_\gamma\).
 
 \smallskip
	{\em Step 2.} Thanks to \cite[Theorem II.2.42]{JS} and \cite[Theorem 14.68]{jacod79}, on an extension \(\mathbb{B}^0\) of the stochastic basis \((\Omega \times \m, \cF \otimes \mathcal{M}, (\cG_s)_{s \geq 0}, Q^0)\), there exists an \(r\)-dimensional standard Brownian motion \(W\) and a Poisson random measure \(\mathfrak{p}\) with intensity measure \(\mathfrak{q} := \llambda \otimes \n\) such that a.s.
	\[
	\begin{cases} X = \omega^0 & \text{ on } [0, t^0], \\
		d X_s = b (\gamma_s, s, X) d s + \sigma (\gamma_s, s, X) d W_s 
		\\\hspace{2cm}+ \int h (k (\gamma_s, s, X, y) (\mathfrak{p} - \mathfrak{q}) (ds, dy) 
		\\\hspace{2cm}+ \int \big(k(\gamma_s, s, X, y) - h(k (\gamma_s, s, X, y)) \big) \mathfrak{p} (ds, dy), & \text{ on } (t^0, \infty). \end{cases}
	\]
	Using the local Lipschitz conditions and boundedness conditions from the Conditions~\ref{cond: main1} (iv) and \ref{cond: main3}, we deduce from \cite[Theorem 14.23]{jacod79} that there exists a \cadlag adapted process \(X^n\) on the stochastic basis \(\mathbb{B}^0\) such that a.s.
		\[
	\begin{cases} X^n = \omega^n & \text{ on } [0, t^n], \\
		d X^n_s = b (\gamma_s, s, X^n) d s + \sigma (\gamma_s, s, X^n) d W_s 
		\\\hspace{2cm}+ \int h (k (\gamma_s, s, X^n, y) (\mathfrak{p} - \mathfrak{q}) (ds, dy) 
		\\\hspace{2cm}+ \int \big(k(\gamma_s, s, X^n, y) - h(k (\gamma_s, s, X^n, y)) \big) \mathfrak{p} (ds, dy), & \text{ on } (t^n, \infty). \end{cases}
	\]
	Let \(Q^n\) be the law of \((X^n, \delta_\gamma \,d \llambda)\), seen as a probability measure on \((\Omega \times \m, \cF \otimes \mathcal{M})\). Then, by \cite[Theorem II.2.42]{JS}, \(Q^n \in \mathcal{P}^W (t^n, \omega^n)\). In the following, we prove that \(X^n \to X^0\) in the ucp topology as \(n \to \infty\), i.e., we prove that \(X^n \to X^0\) uniformly on compact time intervals in probability. Once this is done, Theorem \ref{theo: value function lower semi} follows, as then \(Q^n \to Q^0\) weakly.
   	In this regard, take \(N > T\) and \(\varepsilon > 0\), and define 
	\[
	T_\h^n := \inf \{s \geq 0 \colon \|X^n_s\| \vee \|X^0_s\| \geq \h \text{ or } \|X^n_{s-}\| \vee \|X^0_{s-}\| \geq \h \}
	\]
	for \(\h > 0\).
 Let \(P\) and \(E\) be the probability measure and the expectation on \(\mathbb{B}^0\).
	Using the Burkholder--Davis--Gundy inequality and the boundedness assumptions on \(b, \sigma\) and \(v\) from Conditions~\ref{cond: main1}~(iv) and \ref{cond: main3}, it follows readily that 
	\[
	\sup_{n \in \mathbb{Z}_+} E \Big[ \sup_{s \in [0, N]} \|X^n_s\|^2 \Big] < \infty.
	\]
	Chebyshev's inequality yields that
	\[
	P (T^n_\h \leq N) = P \Big( \sup_{s \in [0, N]} \|X^n_s\| \vee \|X^0_s\| \geq \h \Big) \leq \frac{2}{\h^2} \sup_{n \in \mathbb{Z}_+} E \Big[ \sup_{s \in [0, N]} \|X^n_s\|^2 \Big].
	\]
	Hence, there exists an \(\h = \h (\varepsilon) > 0\) such that 
	\[
	P (T^n_\h \leq N) \leq \varepsilon.
	\]
	Set \(s^n := t^n \wedge t^0\) and \(S^n := t^n \vee t^0\), and notice that 
	\begin{equation} \label{eq: three terms ucp}
		\begin{split}
	E \Big[ \sup_{s \in [0, N \wedge T^n_\h]} &\|X^n_s - X^0_s\|^2 \Big] \\&\leq 2 E \Big[ \sup_{s \in [0, S^n \wedge T^n_\h]} \| X^n_s - X^0_s\|^2 \Big] \\&\qquad + 2 E \Big[ \sup_{s \in [S^n \wedge T^n_\h, N \wedge T^n_\h]} \|X^n_s - X^n_{S_n \wedge T^n_\h} - (X^0_s - X^0_{S_n \wedge T^n_\h})\|^2 \Big].
	\end{split}
\end{equation}
Using the boundedness assumptions on \(b, \sigma\) and \(v\) from Conditions \ref{cond: main1} (iv) and \ref{cond: main3}, and the Burkholder--Davis--Gundy inequality, we obtain that
\begin{align} \label{eq: first term ineq}
E \Big[ \sup_{s \in [0, S^n \wedge T^n_\h]} \|X^n_s - X^0_s\|^2 \Big] \leq C\, \Big(\sup_{s \in [0, T]} \|\omega^n (s \wedge t^n) - \omega^0 (s \wedge t^0)\|^2 + S^n - s^n\Big),
\end{align}
where the constant does not depend on \(n\). 
Similarly, using the local Lipschitz conditions from Condition \ref{cond: main3}, we obtain that 
\begin{equation} \label{eq: main gronwall ineq}
	\begin{split}
	E \Big[ \sup_{s \in [S^n \wedge T^n_\h, N \wedge T^n_\h]} \|X^n_s - X^n_{S_n \wedge T^n_\h} &- (X^0_s - X^0_{S_n \wedge T^n_\h})\|^2 \Big] \\&\leq C \int_0^N E \Big[ \sup_{r \in [0, s \wedge T^n_\h]} \|X^n_r - X^0_r\|^2 \Big] ds,
\end{split}
\end{equation}
where \(C = C(N) > 0\) does not depend on \(n\) and \(N \mapsto C(N)\) is increasing. Finally, we deduce from \eqref{eq: three terms ucp} -- \eqref{eq: main gronwall ineq} and Gronwall's lemma that 
\[
E \Big[ \sup_{s \in [0, N \wedge T^n_\h]} \|X^n_s - X^0_s\|^2 \Big] \leq C \Big(\sup_{s \in [0, T]} \|\omega^n (s \wedge t^n) - \omega^0 (s \wedge t^0)\|^2 + S^n - s^n\Big) \to 0.
\]
Therefore, for every \(\delta > 0\) there exists a \(K > 0\) such that 
\begin{align*}
	P \Big( \sup_{s \in [0, N]} \|X^n_s - X^0_s\| \geq \delta\Big) &\leq P \Big( \sup_{s \in [0, N]} \|X^n_s - X^0_s\| \geq \delta, T^n_\h > N\Big) + \varepsilon
	\\&\leq \frac{1}{\delta^2} E \Big[ \sup_{s \in [0, N \wedge T^n_\h]} \|X^n_s - X^0_s\|^2 \Big] + \varepsilon 
	\leq 2 \varepsilon
\end{align*}
for all \(n \geq K\). This means that \(X^n \to X^0\) in the ucp topology. 
The proof is complete.
\end{proof}

\begin{proof}[Proof of Theorem \ref{theo: value function lower semi}]
Take \((t^n, \omega^n)_{n = 0}^\infty \subset [0, T] \times D([0, T]; \bR^d)\) such that \((t^n,\omega^n) \to (t^0, \omega^0)\) and let \(\psi \colon \Omega \to \bR\) be a bounded \(\cC (t^0, \omega^0)\)-a.s. lower semicontinuous function. 
Thanks to Theorem~\ref{theo: connection to control rules}, \(\psi \circ X\) is \(\mathcal{P}^W (t^0, \omega^0)\)-a.s. lower semicontinuous.
For every \(\varepsilon > 0\), there exists a measure \(Q = Q(\varepsilon) \in \mathcal{P}^W (t^0, \omega^0)\) such that 
\[
v^W (t^0, \omega^0) \leq E^{Q} \big[ \psi (X) \big] + \varepsilon. 
\]
By Proposition \ref{prop: PW lower hemi}, there exists a sequence \((Q^n)_{n = 1}^\infty\) such that \(Q^n \in \mathcal{P}^W (t^n, \omega^n)\) and \(Q^n \to Q\) weakly. By a version of the continuous mapping theorem (\cite[Example 17, p. 73]{pollard}), as \(\psi \circ X\) is \(Q\)-a.s. lower semicontinuous, we obtain that 
\begin{align*}
    \liminf_{n \to \infty} v^W (t^n, \omega^n) \geq \liminf_{n \to \infty} E^{Q^n} \big[ \psi (X) \big] \geq E^{Q} \big[ \psi (X) \big] \geq v^W (t^0, \omega^0) - \varepsilon.
\end{align*}
As \(\varepsilon > 0\) is arbitrary, we conclude that \(v^W\) is lower semicontinuous at \((t^0, \omega^0)\). Since \(v = v^W\) by Theorem \ref{theo: connection to control rules}, the proof is complete.
\end{proof}

\section{Proofs for the Markovian Case} \label{sec: pf markovian}

In the following, we will frequently use results from \cite{K19}. To this end, let us define the \emph{symbol} associated to the coefficients \((\bb, \bs, \n \o \bk^{-1})\) by 
\begin{equation} \label{eq: df symbol}
\begin{split}
q(f, x, \xi)  := -i \langle \bb(f,x), \xi \rangle & + \frac{1}{2} \langle \xi, \bs \bs^*(f,x) \xi \rangle \\ &+ 
\int \big(1 - e^{i \langle y, \xi \rangle} + i \langle \xi, h(y) \rangle \big) (\n \o \bk(f,x, \cdot)^{-1})(dy),
\end{split}
\end{equation}
for \(x, \xi \in \bR^d\) and \(f \in F\). Here, \(i\) denotes the imaginary number.

The next result emerges as a special case of \cite[Lemma A.2]{K19} by setting \(I:= F \times \bR^d\). We will make use of it throughout this section.

\begin{lemma} \label{lem: symbol}
    Suppose that Condition \ref{cond: main1 markov} holds.
    Then, 
    \[
    \lim_{r \to 0} \sup_{f \in F} \sup_{x \in \bR^d} \sup_{\| \xi \| \leq r} | q(f,x,\xi) | = 0.
    \]
\end{lemma}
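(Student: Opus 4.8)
The plan is to prove the estimate by hand from the uniform bounds and the uniform tightness collected in Condition \ref{cond: main1 markov}(iv); the same conclusion is of course immediate from \cite[Lemma A.2]{K19} applied with $I = F \times \bR^d$, but the direct argument is short. Fix once and for all a radius $a \in (0,1]$ with $h(y) = y$ for $\|y\| \le a$, and set $M := \|h\|_\infty < \infty$ (both available since $h$ is a truncation function). For $(f,x) \in F \times \bR^d$ write $K := \n \o \bk(f,x,\cdot)^{-1}$; by Condition \ref{cond: main1 markov}(iv) we have $\|\bb(f,x)\| \vee \|\bs(f,x)\| \le C$, $\int (\|y\|^2 \wedge 1)\,K(dy) \le C$, and $K(\{\|y\| > R\}) = \n(\|\bk(f,x,\cdot)\| > R) \le \varepsilon(R)$, where $\varepsilon(R) := \sup_{f,x} \n(\|\bk(f,x,\cdot)\| > R) \to 0$ as $R \to \infty$.

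First I would bound the local part: by the Cauchy--Schwarz inequality, $|\langle \bb(f,x),\xi\rangle| \le C\|\xi\|$ and $\tfrac12 |\langle \xi, \bs\bs^*(f,x)\xi\rangle| \le \tfrac12 \|\bs(f,x)\|^2\|\xi\|^2 \le \tfrac12 C^2 \|\xi\|^2$. For the jump part I would use the elementary inequalities $|1 - e^{iu} + iu| \le u^2/2$ and $|1 - e^{iu}| \le |u| \wedge 2$ for $u \in \bR$, and split the integral over the three regions $\{\|y\| \le a\}$, $\{a < \|y\| \le R\}$, $\{\|y\| > R\}$ for a free parameter $R \ge 1$. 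On the first region $h(y) = y$, so the integrand is at most $\tfrac12\langle y,\xi\rangle^2 \le \tfrac12(\|y\|^2\wedge 1)\|\xi\|^2$ (as $a \le 1$), contributing at most $\tfrac{C}{2}\|\xi\|^2$. On the second, the integrand is at most $|\langle y,\xi\rangle| + |\langle\xi,h(y)\rangle| \le (R+M)\|\xi\|$, and $K(\{\|y\| > a\}) \le a^{-2}\int(\|y\|^2\wedge 1)K(dy) \le Ca^{-2}$, so this contributes at most $(R+M)Ca^{-2}\|\xi\|$. On the third, the integrand is at most $2 + M\|\xi\|$, contributing at most $(2 + M\|\xi\|)\varepsilon(R)$. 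Altogether, for $\|\xi\| \le r \le 1$,
\[
\sup_{f \in F}\sup_{x \in \bR^d}\sup_{\|\xi\| \le r} |q(f,x,\xi)| \ \le\ Cr + \tfrac12(C^2 + C)r^2 + (R+M)Ca^{-2}\,r + (2+M)\varepsilon(R).
\]

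To conclude, given $\eta > 0$ I would first choose $R \ge 1$ so large that $(2+M)\varepsilon(R) < \eta/2$ --- this is precisely the second clause of Condition \ref{cond: main1 markov}(iv) --- and then, with this $R$ fixed, choose $r \in (0,1]$ so small that $Cr + \tfrac12(C^2+C)r^2 + (R+M)Ca^{-2}r < \eta/2$; letting $r \to 0$ gives the claim. I do not anticipate any genuine difficulty; the only point requiring care is the bookkeeping with the truncation function $h$, namely that $h(y) = y$ near the origin (so that the integrand is $O(\|y\|^2\|\xi\|^2)$ there, which is what makes both the integral finite and the local estimate $o(1)$ as $\xi \to 0$) and that $h$ is bounded (so that the contribution of the far tail $\{\|y\| > R\}$ stays bounded and can be made small via the uniform tightness).
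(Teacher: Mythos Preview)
Your proposal is correct, and in fact it contains the paper's own proof verbatim: the paper simply records that the lemma is a special case of \cite[Lemma~A.2]{K19} with \(I := F \times \bR^d\), which is precisely the citation you give in your opening sentence. Your additional self-contained argument via the three-region split \(\{\|y\|\le a\},\{a<\|y\|\le R\},\{\|y\|>R\}\) is sound and makes transparent exactly which parts of Condition~\ref{cond: main1 markov}(iv) drive each estimate; it is a welcome elaboration rather than a different route.
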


\subsection{Feller properties: Proof of Theorem \ref{thm: feller}} \label{sec: pf feller}
We focus on the \(\usc_b\)--Feller property. The \(C_b\)--Feller property can be established in similar fashion. 
Let \(t \in \bR_+\). As \(S_0 = \on{id}\), we may assume \(t > 0\).
Consider an upper semicontinuous function \(\psi \colon \bR^d \to \bR\).
It follows from Remark \ref{rem: as continuity} that \(\psi(X_t)\) is \(\cC(s, \omega)\)-a.s. upper semicontinuous for every \(\omega \in \Omega\) and \(s \leq t\). Hence, we deduce from Theorem \ref{theo: value function upper semi} that the map 
\[
[0,t] \times D([0,t]; \bR^d) \ni (s, \omega) \mapsto \sup_{P \in \cC(s, \omega)} E^P[ \psi(X_t)]
\]
is upper semicontinuous (where \([0, t] \times D([0, t]; \bR^d)\) is endowed with the topology induced by~\(\d\)). As the embedding
\[
\bR^d \ni x \mapsto (0, \bx) \in [0,t]  \times D([0,t]; \bR^d)
\]
is continuous, we deduce upper semicontinuity of 
\(\bR^d \ni x \mapsto \cE^x(\psi(X_t)) = S_t(\psi)(x) \).
This completes the proof.\qed

\subsection{Form of Generators: Proof of Proposition \ref{prop: generator}} \label{sec: pf generator}
We verify the prerequisites of \cite[Lemma 6.1]{K19}. This will imply the claim.
Note first that, for every \(r > 0\),
\[
 \sup_{f \in F} \sup_{\|x\| \leq r}    \Big(    \|\bb (f, x)\| + \|\bs (f,x)\| + \int \, (\|\bk (f, x, z)\|^2 \wedge 1)\, \n (dz) \Big) < \infty,
\]
thanks to part (iv) of Condition \ref{cond: main1 markov}.
Further, for every \(f \in F\) and \(x \in \bR^d\), there exists a measure \(P \in \cC(0, \bx)\) with
\[
 (dB^P_s /d\llambda, dC^P_s/d\llambda, \nu^P_s/ d \llambda) 
 = (\bb(f, X_{s-}), \bs \bs^*(f, X_{s-}), \n \o \bk(f, X_{s-}, \cdot\, )^{-1})
\]
for \((\llambda \otimes P)\text{-a.e. }\) \((s,\omega) \in \of 0,  \infty \of\).
Indeed, this follows from \cite[Corollary~IX.2.33]{JS} once we checked its prerequisites.
In view of Condition \ref{cond: main1 markov}, it suffices to prove continuity of 
\[
x \mapsto  \int  h^i(\bk(f,x,z)) h^j(\bk(f,x,z)) \, \n(dz), \quad
x \mapsto \int g(\bk(f,x,z)) \, \n(dz),
\]
for \(i,j =1, \dots, d\) and every continuous bounded function \(g \colon \bR^d \to \bR\) vanishing around the origin.
As the truncation function \(h\) is chosen to be (Lipschitz) continuous, the functions
\[
y \mapsto \frac{h^i(y)h^j(y)}{\|y\|^2 \wedge 1} \, \1_{\{y \not = 0\}}, \quad y \mapsto \frac{g(y)}{\|y\|^2 \wedge 1}
\]
are \(\n \o \bk^{-1}\)-a.e. continuous and bounded. Hence, the desired continuity follows from part  (iii) of Condition \ref{cond: main1 markov} and the continuous mapping theorem.

\smallskip
Next, we check the continuity requirements of \cite[Lemma 6.1]{K19}.

\emph{Step 1}. For every compact subset \(K \subset \bR^d\), the families
\[
K \ni x \mapsto \bb(f,x), \quad K \ni x \mapsto \bs \bs^*(f,x)\colon\quad f \in F,
\]
are uniformly equicontinuous.
To see this, part (iii) of Condition \ref{cond: main1 markov} shows that the map
\[
F \times \bR^d \ni (f, x ) \mapsto (\bb(f,x), \bs \bs^*(f,x))
\]
is continuous, and in particular uniformly continuous on the compact set \(F \times K\). This implies the claim.

\emph{Step 2}. For every compact subset \(K \subset \bR^d\) and \(g \in C^1_b(\bR^d; \bR)\) with \(|g(y)| \leq \|y\|^2 \wedge 1\) for all \(y \in \bR^d\), the family
\[
K \ni x \mapsto \int g(\bk(f,x,z))\ \n (dz)\colon \quad f \in F,
\]
is uniformly equicontinuous.
Notice that the function
\[
y \mapsto \frac{g(y)}{\|y\|^2 \wedge 1} \1_{\{y \not = 0\}}
\]
is \(\n \o \bk^{-1}\)-a.e. continuous and bounded.
Hence, by part (iii) of Condition \ref{cond: main1 markov} and the continuous mapping theorem, the function
\[
F \times \bR^d \ni (f,x) \mapsto \int  g(\bk(f,x,z)) \, \n(dz)
\]
is continuous, and in particular uniformly continuous on the compact set \(F \times K\). This implies the claim.

\smallskip
\emph{Step 3}. The uniform continuity property
\[
\lim_{r \to \infty} \sup_{\| y- x \| \leq r} \sup_{\| \xi \| \leq r^{-1}} \sup_{f \in F} \, |q(f, y, \xi) | = 0
\]
holds for every \(x \in \bR^d\).
Indeed, this follows from Lemma \ref{lem: symbol}. \qed

\subsection{Viscosity Solution: Proofs of Theorems \ref{thm: viscosity} and \ref{thm: viscosity uniqueness}} \label{sec: pf viscosity}
We start with three auxiliary results.

\begin{proposition} \label{prop: continuity in time}
    Suppose that Condition \ref{cond: main1 markov} holds.
    Let
    \((S_t)_{t \in \bR_+}\) be the family of operators
    defined in \eqref{eq: semigroup},
    take \(\psi \in C_b(\bR^d; \bR)\) and let \(K \subset \bR^d\) be compact.
    Then, 
    \[
    \bR_+ \ni t \mapsto S_t(\psi)(x)
    \]
    is continuous uniformly in \(x \in K\).
\end{proposition}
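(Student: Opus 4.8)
\emph{Plan.}
Fix a finite horizon $T>0$; since $T$ is arbitrary it suffices to prove that $[0,T]\ni t\mapsto S_t(\psi)(x)$ is uniformly continuous, uniformly over $x\in K$. Put $R_K:=\sup_{x\in K}\|x\|<\infty$. Recall that $(S_t)_{t\in\bR_+}$ is a sublinear Markovian semigroup on the bounded upper semianalytic functions, so $S_{t+h}=S_t\circ S_h$; moreover $S_h\psi$ is bounded real-valued (as $\psi$ is bounded and $\cC(0,\bx)\neq\emptyset$ by Standing Assumption~\ref{SA: non empty}) and universally measurable. Hence $|S_h\psi-\psi|$ is a bounded universally measurable function, and monotonicity and sublinearity of $S_t$ yield, for all $x\in\bR^d$ and $t,h\ge0$,
\[
\big|S_{t+h}(\psi)(x)-S_t(\psi)(x)\big|\ \le\ S_t\big(|S_h\psi-\psi|\big)(x)\ =\ \cE^x\big(|S_h\psi-\psi|(X_t)\big).
\]
The plan is to bound the right-hand side by splitting along $\{\|X_t\|\le M\}$: on this event $|S_h\psi-\psi|(X_t)\le \rho_M(h):=\sup_{\|x\|\le M}|S_h\psi(x)-\psi(x)|$, while the complementary contribution is at most $2\|\psi\|_\infty\sup_{P\in\cC(0,\bx)}P(\|X_t\|>M)$. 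I would then prove that (i) $\rho_M(h)\to0$ as $h\downarrow0$ for each fixed $M$, and (ii) $\sup_{x\in K}\sup_{P\in\cC(0,\bx)}P(\|X_t\|>M)$ is small, uniformly in $t\in[0,T]$, once $M$ is large.

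The technical heart is a uniform tail estimate for the process started from a point, obtained through the symbol $q$ of \eqref{eq: df symbol}. Fix $x\in\bR^d$ and $P\in\cC(0,\bx)$. Using Filippov's implicit function theorem (as in Lemma~\ref{lem: control}), the differential characteristics of $P$ are $(\llambda\otimes P)$-a.e. of the form $(\bb(f_s,X_{s-}),\bs\bs^*(f_s,X_{s-}),\n\o\bk(f_s,X_{s-},\cdot)^{-1})$ for a predictable selection $f$. Applying It\^o's formula to $X\mapsto e^{i\langle\xi,X_s\rangle}$ and upgrading the stochastic integrals to true martingales via Condition~\ref{cond: main1 markov}~(iv) gives
\[
E^P\big[e^{i\langle\xi,X_h-x\rangle}\big]-1\ =\ -\,E^P\!\Big[\int_0^h e^{i\langle\xi,X_{s-}-x\rangle}\,q(f_s,X_{s-},\xi)\,ds\Big],
\]
hence $\big|E^P[e^{i\langle\xi,X_h-x\rangle}]-1\big|\le h\,\Phi(\|\xi\|)$ with $\Phi(r):=\sup_{\|\xi\|\le r}\sup_{f\in F}\sup_{x\in\bR^d}|q(f,x,\xi)|$. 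Here $\Phi(r)<\infty$ for every $r$ by Condition~\ref{cond: main1 markov}~(iv), and $\Phi(r)\to0$ as $r\to0$ by Lemma~\ref{lem: symbol}. Feeding this into the elementary characteristic-function tail estimate (a coordinatewise union bound in the one-dimensional inequality $\mu(|y|>2/u)\le u^{-1}\int_{-u}^{u}(1-\operatorname{Re}\widehat\mu(\xi))\,d\xi$) produces a dimensional constant $c_d$ with
\[
\sup_{x\in\bR^d}\ \sup_{P\in\cC(0,\bx)}P\big(\|X_h-x\|>R\big)\ \le\ c_d\,h\,\Phi(c_d/R),\qquad h,R>0 .
\]
For fixed $R$ this tends to $0$ as $h\downarrow0$, and $\sup_{h\in[0,T]}$ of it tends to $0$ as $R\to\infty$.

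For (i): for $x\in\bR^d$ with $\|x\|\le M$, $P\in\cC(0,\bx)$ and $\eta\in(0,1]$, splitting along $\{\|X_h-x\|\le\eta\}$ and using uniform continuity of $\psi$ on $\overline B(0,M+1)$ gives $E^P[|\psi(X_h)-\psi(x)|]\le\varpi_\psi(\eta;M+1)+2\|\psi\|_\infty P(\|X_h-x\|>\eta)$, where $\varpi_\psi(\,\cdot\,;M+1)$ is the modulus of continuity of $\psi$ on $\overline B(0,M+1)$; taking the supremum over $P$ and $\|x\|\le M$, then letting $h\downarrow0$, then $\eta\downarrow0$, yields $\rho_M(h)\to0$. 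For (ii): since $\{\|X_t\|>M\}\subset\{\|X_t-x\|>M-R_K\}$ for $x\in K$, the tail bound gives $\sup_{x\in K}\sup_{P\in\cC(0,\bx)}P(\|X_t\|>M)\le c_d T\,\Phi\big(c_d/(M-R_K)\big)=:\mu_M$ for $M>R_K$, with $\mu_M\to0$ as $M\to\infty$. Combining the two, $\sup_{x\in K}|S_{t+h}(\psi)(x)-S_t(\psi)(x)|\le\rho_M(h)+2\|\psi\|_\infty\mu_M$ for all $t,h\in[0,T]$; given $\varepsilon>0$ I would choose $M$ with $2\|\psi\|_\infty\mu_M<\varepsilon/2$ and then $\delta>0$ with $\rho_M(h)<\varepsilon/2$ for $h\le\delta$. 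Since any $s,t\in[0,T]$ with $|s-t|\le\delta$ can be written $t=s+h$ with $h\in[0,\delta]$, this gives the asserted uniform-in-$K$ uniform continuity on $[0,T]$, and hence on $\bR_+$.

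The hard part will be making the symbol estimate fully rigorous in the presence of jumps: justifying It\^o's formula for $e^{i\langle\xi,\cdot\rangle}$ and showing the compensated drift/jump stochastic integrals are genuine (not merely local) martingales, using that $\int(\|y\|^2\wedge1)\,(\n\o\bk(f,x,\cdot)^{-1})(dy)\le C$ and the other bounds of Condition~\ref{cond: main1 markov}~(iv), together with producing the predictable selection $f$ realizing the characteristics of an arbitrary $P\in\cC(0,\bx)$. The remaining steps are routine splitting and $\varepsilon/2$ arguments.
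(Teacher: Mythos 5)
Your proof is correct, but it takes a genuinely more self-contained route than the paper, which disposes of Proposition~\ref{prop: continuity in time} in one line by verifying the hypotheses of \cite[Theorem 5.3(ii)]{K19} (namely Condition~\ref{cond: main1 markov}~(iv), the \(\usc_b\)--Feller property from Theorem~\ref{thm: feller}, and Lemma~\ref{lem: symbol}). You instead unfold the argument behind such a result: the semigroup identity \(S_{t+h}=S_tS_h\) plus monotonicity and sublinearity reduce everything to the tail bound \(\sup_x\sup_{P\in\cC(0,\bx)}P(\|X_h-x\|>R)\le c_d\,h\,\Phi(c_d/R)\), which you derive from the Dynkin formula for \(e^{i\langle\xi,\cdot\rangle}\) and the coordinatewise truncation inequality. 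The mathematical core is the same uniform symbol estimate of Lemma~\ref{lem: symbol}, but your version buys two things: it is independent of the external reference, and it does not use the Feller property at all, since the splitting is done directly at the level of \(\sup_P E^P[\cdot]\). The two delicate points you flag are indeed the only ones needing care, and both go through: (a) \(|S_h\psi-\psi|\) need not be upper semianalytic, but it is bounded and universally measurable, which suffices for \(\cE^x\) and for monotonicity/sublinearity (the semigroup property itself is only invoked for \(\psi\in C_b(\bR^d;\bR)\)); (b) the martingale identity follows from \cite[Theorem~II.2.42]{JS} applied to the real and imaginary parts of \(e^{i\langle\xi,\cdot\rangle}\), the predictable selection realizing the characteristics is exactly Lemma~\ref{lem: control} (available under Condition~\ref{cond: main1 markov}~(i) and (iii)), and Condition~\ref{cond: main1 markov}~(iv) makes the compensator term uniformly bounded, so the bounded local martingale is a true martingale and also gives \(\Phi(r)<\infty\) for each \(r\). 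The remaining \(\varepsilon/2\)-splitting is routine, so the argument is complete.
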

\begin{proof}
    Thanks to part (iv) of Condition \ref{cond: main1 markov}, Theorem \ref{thm: feller} and Lemma \ref{lem: symbol}, this follows from \cite[Theorem 5.3 (ii)]{K19}.
\end{proof}

\begin{lemma} \label{lem: G cont}
Suppose that Condition \ref{cond: main1 markov} holds.
Let \(\phi \in C_b^\infty(\bR^d;\bR)\). Then, the function
\[
\bR^d \ni x \mapsto G(x,\phi)
\]
is continuous and bounded.
\end{lemma}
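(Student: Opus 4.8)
The plan is to write $G(x,\phi)=\sup_{f\in F}\Lambda(f,x)$, where $\Lambda(f,x)$ denotes the expression in braces defining $G(x,\phi)$, and to invoke Berge's maximum theorem. Since $F$ is compact by Condition~\ref{cond: main1 markov}~(i), \cite[Lemma~17.30]{charalambos2013infinite} reduces both claims to showing that $(f,x)\mapsto\Lambda(f,x)$ is jointly continuous and bounded on $F\times\bR^d$: a uniform bound on $\Lambda$ bounds $G(\cdot\,,\phi)$, and joint continuity of $\Lambda$ makes $G(\cdot\,,\phi)$ continuous. Boundedness is the routine part: Taylor's theorem and the (bounded, Lipschitz) truncation function $h$ give $|\phi(x+y)-\phi(x)-\langle\nabla\phi(x),h(y)\rangle|\le C_\phi(\|y\|^2\wedge1)$ for all $x,y\in\bR^d$, with $C_\phi$ depending only on $\|\nabla\phi\|_\infty$, $\|\nabla^2\phi\|_\infty$, $\|\phi\|_\infty$ and $h$ — the same estimate already used in the proof of Proposition~\ref{prop: PR closed} — and together with the uniform bounds on $\bb$, $\bs$ and $\int(\|\bk(f,x,z)\|^2\wedge1)\,\n(dz)$ from Condition~\ref{cond: main1 markov}~(iv) this makes $|\Lambda(f,x)|$ bounded uniformly in $(f,x)$.

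For continuity, the drift and diffusion contributions $\langle\nabla\phi(x),\bb(f,x)\rangle$ and $\operatorname{tr}[\nabla^2\phi(x)\bs\bs^*(f,x)]$ are jointly continuous because $\nabla\phi$, $\nabla^2\phi$ are continuous and $(f,x)\mapsto(\bb(f,x),\bs\bs^*(f,x))$ is continuous by Condition~\ref{cond: main1 markov}~(iii). The substantive term is the jump part $J(f,x):=\int\bigl[\phi(x+\bk(f,x,z))-\phi(x)-\langle\nabla\phi(x),h(\bk(f,x,z))\rangle\bigr]\,\n(dz)$. Writing $\Phi_x(y):=\phi(x+y)-\phi(x)-\langle\nabla\phi(x),h(y)\rangle$ and $K_{f,x}:=\n\o\bk(f,x,\cdot)^{-1}$, and using $\Phi_x(0)=0$, one has $J(f,x)=\int\Phi_x\,dK_{f,x}$. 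For $(f_n,x_n)\to(f_0,x_0)$ I would split
\[
J(f_n,x_n)-J(f_0,x_0)=\int(\Phi_{x_n}-\Phi_{x_0})\,dK_{f_n,x_n}+\Bigl(\int\Phi_{x_0}\,dK_{f_n,x_n}-\int\Phi_{x_0}\,dK_{f_0,x_0}\Bigr).
\]

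For the first summand, a Taylor expansion of $\Phi_{x_n}-\Phi_{x_0}$, the uniform continuity of $\nabla\phi$ and $\nabla^2\phi$ (which holds since $\phi\in C^\infty_b$) and the boundedness of $h$ yield a modulus of continuity $\rho$ with $|\Phi_{x_n}(y)-\Phi_{x_0}(y)|\le\rho(\|x_n-x_0\|)(\|y\|^2\wedge1)$ for all $y$; the summand is then at most $\rho(\|x_n-x_0\|)\int(\|y\|^2\wedge1)\,K_{f_n,x_n}(dy)\le C\,\rho(\|x_n-x_0\|)\to0$ by Condition~\ref{cond: main1 markov}~(iv). For the second summand, Condition~\ref{cond: main1 markov}~(iii) says exactly that the finite measures $\mu_{f,x}(dy):=(\|y\|^2\wedge1)\,K_{f,x}(dy)$ depend continuously on $(f,x)$ in the topology of weak convergence; since $y\mapsto\Phi_{x_0}(y)/(\|y\|^2\wedge1)$, extended by $0$ at the origin, is bounded and continuous on $\bR^d\setminus\{0\}$ and $\mu_{f_0,x_0}(\{0\})=0$, the continuous mapping theorem for weakly convergent finite measures (\cite[Theorem~8.10.61]{bogachev}) gives $\int\Phi_{x_0}\,dK_{f_n,x_n}\to\int\Phi_{x_0}\,dK_{f_0,x_0}$. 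Hence $J$, and therefore $\Lambda$, is jointly continuous, which by Berge's theorem finishes the proof.

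The step I expect to be the main obstacle is precisely this jump term: the integrand $\Phi_x$ and the measure $K_{f,x}$ both move with $x$, and the natural density $\Phi_x(y)/(\|y\|^2\wedge1)$ is only continuous away from the origin, so one cannot feed the integrand directly into the $\mathcal{L}$-continuity of Condition~\ref{cond: main1 markov}~(iii). The remedy is the splitting above, controlling the ``integrand part'' by uniform continuity of the derivatives of $\phi$ together with the truncated second-moment bound, and the ``measure part'' by the (comparatively strong) topology on $\mathcal{L}$ through a Portmanteau-type argument; it is exactly here that smoothness of $\phi$ is used — in fact $\phi\in C^2_b$ with uniformly continuous Hessian would suffice.
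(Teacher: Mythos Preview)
Your proof is correct and follows the same overall strategy as the paper: write \(G(x,\phi)=\sup_{f\in F}\Lambda(f,x)\), establish joint continuity and boundedness of \(\Lambda\), and conclude via Berge's maximum theorem. The paper handles the jump integral in one line by invoking ``parts (iii) and (iv) of Condition~\ref{cond: main1 markov} and the dominated convergence theorem,'' whereas you supply an explicit two-term splitting (integrand variation versus measure variation) and close the second term with a Portmanteau argument for the weighted measures \((\|y\|^2\wedge1)K_{f,x}(dy)\); since Condition~\ref{cond: main1 markov}~(iii) only gives continuity of the push-forward \(\n\o\bk(f,x,\cdot)^{-1}\) and not pointwise continuity of \(z\mapsto\bk(f,x,z)\), your more detailed treatment is in fact the cleaner way to justify that step.
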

\begin{proof}
Notice that, for all \(x, y \in \bR^d\),
	\[
	\big| \phi (x + y) - \phi(x) - \langle \nabla \phi (x), h (y) \rangle \big| \leq C \|\nabla^2 \phi \|_\infty (\|y\|^2 \wedge 1),
	\]
where the constant \(C > 0\) depends on \(\phi\) and the truncation function \(h\).
Hence, parts (iii) and (iv) of Condition \ref{cond: main1 markov} and the dominated convergence theorem imply that 
\[
F \times \bR^d \ni (f,x) \mapsto \int\, \big[ \phi(x + \bk (f, x, z)) - \phi(x) - \langle \nabla \phi(x) , h (\bk (f, x, z)) \rangle\big]\, \n (dz)
\]
is continuous.
Using parts (i), (iii) and (iv) of Condition \ref{cond: main1 markov}, and Berge's maximum theorem (\cite[Lemma 17.30]{charalambos2013infinite}), we conclude that \( x \mapsto G(x,\phi)\) is continuous and bounded.   
\end{proof}

\begin{lemma} \label{lem: viscosity}
Suppose that Condition \ref{cond: main1 markov} holds.
    Let \((T_t)_{t \in \bR_+}\) be a sublinear Markovian semigroup on \(C_b(\bR^d; \bR)\) with the following properties:
\begin{enumerate}
    \item[\textup{(i)}] for \(\psi \in C_b(\bR^d; \bR)\), the map \((t,x) \mapsto T_t(\psi)(x)\) is continuous;
    \item[\textup{(ii)}]  for all \(x \in \bR^d\) and \(\phi \in C_c^\infty(\bR^d; \bR)\),
    \[
    \lim_{t \to 0} \frac{T_t (\phi) (x) - \phi (x)}{t} = G(x, \phi).
    \]
\end{enumerate}
 Then, \((t,x) \mapsto T_t(\psi)(x)\) is a viscosity solution to \eqref{eq: PDE G}.
\end{lemma}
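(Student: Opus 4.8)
\emph{Setup and strategy.} Write \(u(t,x):=T_t(\psi)(x)\). Since \(T_0=\on{id}\) we have \(u(0,\cdot)=\psi\), so the initial condition of \eqref{eq: PDE G} holds with equality (hence both the sub- and the supersolution requirement), and by (i) the map \(u\) is continuous, in particular both upper and lower semicontinuous. It thus remains to verify the interior sub/supersolution inequalities, and for this the plan is to reduce to Proposition~\ref{prop: weak sense}. Record first some elementary consequences of the sublinear Markovian structure: since each \(T_h\) is a sublinear, monotone operator preserving constants, one obtains \(T_h(f+c)=T_h(f)+c\) for \(c\in\bR\), the contraction estimate \(\|T_h(f)-T_h(g)\|_\infty\le\|f-g\|_\infty\), and \(T_h(f+cg)\le T_h(f)+c\,T_h(g)\) for \(c\ge0\); also \(\|u(t,\cdot)\|_\infty\le\|\psi\|_\infty\). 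By hypothesis (ii), \(C_c^\infty(\bR^d;\bR)\) lies in the domain \(\cD(A)\) of the pointwise generator \(A\) of \((T_t)_{t\in\bR_+}\), with \(A\phi=G(\cdot,\phi)\) there; the decisive step is to upgrade this to \(C_b^\infty(\bR^d;\bR)\subset\cD(A)\), still with \(A\phi=G(\cdot,\phi)\), so that Proposition~\ref{prop: weak sense} becomes applicable.

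\emph{Extending the generator identity to \(C_b^\infty\).} Fix \(\phi\in C_b^\infty(\bR^d;\bR)\) and \(x_0\in\bR^d\). For \(R\ge1\) choose \(\chi_R\in C_c^\infty(\bR^d;[0,1])\) equal to \(1\) on \(B_R(x_0)\) and to \(0\) off \(B_{2R}(x_0)\), with \(\|\nabla\chi_R\|_\infty\le C/R\) and \(\|\nabla^2\chi_R\|_\infty\le C/R^2\), and set \(\phi_R:=\phi(x_0)+\chi_R\,(\phi-\phi(x_0))\). Then \(\phi_R-\phi(x_0)\in C_c^\infty\), \(\phi_R\equiv\phi\) near \(x_0\), \(\phi_R(x_0)=\phi(x_0)\), \(\|\phi_R\|_{C_b^2}\) is bounded uniformly in \(R\), and \(|\phi-\phi_R|\le2\|\phi\|_\infty\,\widetilde g_R\), where \(\widetilde g_R\in C_b^\infty(\bR^d;[0,1])\) equals \(0\) on \(B_{R/2}(x_0)\) and \(1\) off \(B_R(x_0)\) (so \(\widetilde g_R-1\in C_c^\infty\)). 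Using that \(T_h\) commutes with constants, (ii) applied to \(\phi_R-\phi(x_0)\) and to \(\widetilde g_R-1\) yields \(h^{-1}\bigl(T_h(\phi_R)(x_0)-\phi_R(x_0)\bigr)\to G(x_0,\phi_R)\) and \(h^{-1}T_h(\widetilde g_R)(x_0)\to G(x_0,\widetilde g_R)\) as \(h\downarrow0\). Since \(\widetilde g_R\) vanishes near \(x_0\), \(G(x_0,\widetilde g_R)=\sup_{f\in F}\int\widetilde g_R(x_0+z)\,(\n\o\bk(f,x_0,\cdot)^{-1})(dz)\le\sup_{f\in F}\n(\|\bk(f,x_0,\cdot)\|>R/2)=:\varepsilon_R\), and \(\varepsilon_R\to0\) by Condition~\ref{cond: main1 markov}~(iv). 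Moreover \(G(x_0,\phi_R)\to G(x_0,\phi)\): the first two terms of \(G\) coincide for all \(R\ge1\), while the integral terms differ by at most \(2\|\phi\|_\infty\,\n(\|\bk(f,x_0,\cdot)\|>R)\), uniformly in \(f\), again by Condition~\ref{cond: main1 markov}~(iii)--(iv). Finally, from \(|\phi-\phi_R|\le2\|\phi\|_\infty\widetilde g_R\), monotonicity and sublinearity give \(|T_h(\phi)(x_0)-T_h(\phi_R)(x_0)|\le2\|\phi\|_\infty T_h(\widetilde g_R)(x_0)\), so that, using \(\phi(x_0)=\phi_R(x_0)\),
\[
\limsup_{h\downarrow0}\frac{T_h(\phi)(x_0)-\phi(x_0)}{h}\le G(x_0,\phi_R)+2\|\phi\|_\infty\varepsilon_R,\qquad
\liminf_{h\downarrow0}\frac{T_h(\phi)(x_0)-\phi(x_0)}{h}\ge G(x_0,\phi_R)-2\|\phi\|_\infty\varepsilon_R .
\]
Letting \(R\to\infty\) gives \(h^{-1}\bigl(T_h(\phi)(x_0)-\phi(x_0)\bigr)\to G(x_0,\phi)\); since \(x_0\) and \(\phi\) were arbitrary, \(C_b^\infty(\bR^d;\bR)\subset\cD(A)\) with \(A\phi=G(\cdot,\phi)\).

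\emph{Conclusion.} Apply Proposition~\ref{prop: weak sense} with the convex cone \(\cH:=C_b(\bR^d;\bR)\) (which contains all constants), the pointwise generator \(A\) analysed above, and \(\psi\in\cH\): hypothesis (i) supplies continuity of \((t,x)\mapsto T_t(\psi)(x)\), and the previous step supplies \(C_b^\infty(\bR^d;\bR)\subset\cD(A)\). Hence \((t,x)\mapsto T_t(\psi)(x)=u(t,x)\) is a viscosity solution to \eqref{eq: PDE A}; and because \(A\phi=G(\cdot,\phi)\) for every \(\phi\in C_b^\infty\) while the test functions entering the viscosity notion have \(C_b^\infty\) time-slices, equation \eqref{eq: PDE A} for this \(A\) is precisely \eqref{eq: PDE G}. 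This is the claim. (Alternatively, once the generator identity on \(C_b^\infty\) is in hand, the interior sub/supersolution inequalities can be checked by hand from the semigroup identity \(u(t_0,x_0)=T_h(u(t_0-h,\cdot))(x_0)\) for \(0<h<t_0\), monotonicity, and \(u(t_0-h,\cdot)\le(\ge)\,\phi(t_0-h,\cdot)\), reducing to test functions affine in \(t\) near \(t_0\) to absorb the time-dependence — this is exactly the bookkeeping carried out in the proof of Proposition~\ref{prop: weak sense}.)

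\emph{Where the difficulty lies.} The crux is the passage from \(C_c^\infty\) to \(C_b^\infty\) in the generator identity: the far-field behaviour of \(T_h\) is not controlled a priori for an abstract semigroup, and it is exactly the tightness requirement in Condition~\ref{cond: main1 markov}~(iv) — transmitted to \((T_t)_{t\in\bR_+}\) only through (ii) applied to the cut-off functions \(\widetilde g_R\) — that forces the tail contributions \(\varepsilon_R\) to vanish. Everything else is either elementary structure of sublinear Markovian semigroups or a citation of Proposition~\ref{prop: weak sense}.
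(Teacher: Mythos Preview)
Your overall architecture coincides with the paper's: extend the generator identity from \(C_c^\infty\) to \(C_b^\infty\) and then invoke Proposition~\ref{prop: weak sense}. The difference lies in how the extension is carried out. The paper simply cites \cite[Corollary~4.7]{K21}, which, under the boundedness in Condition~\ref{cond: main1 markov}~(iv), upgrades the identity \(A\phi=G(\cdot,\phi)\) from \(C_c^\infty\) to \(C_b^\infty\) in one line. You instead give a self-contained cutoff argument: approximate \(\phi\) by \(\phi_R\) agreeing with \(\phi\) on a large ball, control the remainder through \(T_h(\widetilde g_R)\), and use the tightness part of Condition~\ref{cond: main1 markov}~(iv) to kill the tail. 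Your route is longer but has the advantage of being independent of \cite{K21} and of making explicit that it is precisely the tail condition \(\sup_{f,x}\n(\|\bk(f,x,\cdot)\|>R)\to0\) that drives the extension; the paper's citation hides this mechanism.

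One small omission: to conclude \(C_b^\infty(\bR^d;\bR)\subset\cD(A)\) you need not only that the pointwise limit exists and equals \(G(\cdot,\phi)\), but also that \(G(\cdot,\phi)\in\cH=C_b(\bR^d;\bR)\), since this is part of the definition of \(\cD(A)\). The paper handles this via Lemma~\ref{lem: G cont}; you should add a sentence to the same effect.
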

\begin{proof}
Let \(A\) be the (pointwise) generator of \((T_t)_{t \in \bR_+}\).
 Thanks to part (iv) of Condition \ref{cond: main1 markov} and \cite[Corollary 4.7]{K21}, we obtain that
\[
    A (\phi )(x) = G(x, \phi), \quad  \phi \in C_b^\infty(\bR^d; \bR),\, x \in \bR^d.
\]
Using Lemma \ref{lem: G cont}, it follows that \(x \mapsto G (x, \phi)\) is continuous and bounded and consequently, we conclude that \(C_b^\infty (\bR^d; \bR) \subset \cD(A)\).
Combining this observation with Proposition~\ref{prop: weak sense}, we deduce that \((t,x) \mapsto T_t(\psi)(x)\) is a (bounded) viscosity solution to \eqref{eq: PDE G}.   
\end{proof}

\begin{proof}[Proof of Theorem \ref{thm: viscosity}]
By Proposition \ref{prop: generator} and Lemma~\ref{lem: viscosity}, it suffices to show that 
 \( (t,x) \mapsto S_t(\psi)(x)\) is continuous. 
Consider a sequence \((t^n, x^n)_{n = 0}^\infty\) in \(\bR_+ \times \bR^d\) such that \((t^n, x^n) \to (t^0, x^0)\) as \(n \to \infty\).
Notice that, for \(n \in \bN\),
\begin{equation} \label{eq: pf jointly cont}
    | S_{t^0} (\psi)(x^0) - S_{t^n} (\psi)(x^n) | \leq | S_{t^0} (\psi)(x^0) - S_{t^0} (\psi) (x^n) | + | S_{t^0}(\psi) (x^n)  - S_{t^n}(\psi)(x^n) |.
\end{equation}
As \(x \mapsto S_{t^0} (\psi)(x) \) is continuous by Theorem \ref{thm: feller}, the first term on the right hand side of~\eqref{eq: pf jointly cont} vanishes as \(n \to \infty\), while the second term goes to zero by Proposition \ref{prop: continuity in time}.
This completes the proof.
\end{proof}

\begin{proof}[Proof of Theorem \ref{thm: viscosity uniqueness}]
By virtue of Theorem \ref{thm: viscosity}, \((t,x) \mapsto S_t(\psi)(x)\) is a viscosity solution to \eqref{eq: PDE G}. Hence, the claim follows from Theorem~\ref{theo: comparison} in the Appendix.
\end{proof}
%
%-------------------------------
%-------------------------------

\appendix

\section{A Comparison Result for L\'evy-type HJB Equations}
The purpose of this appendix is to tailor the abstract comparison result given by \cite[Corollary~2.23]{hol16} to our framework. 

\begin{condition} \label{cond: local bdd}
	The functions \(\bb\) and \(\bs\) satisfy the following boundedness condition:
	\[
	\forall x \in \bR^d \colon \quad \sup \Big\{ \|\bb (f, x)\| + \|\bs(f, x)\| \colon f \in F \Big\} < \infty.
	\]
	Moreover, there is a constant \(C > 0\) and a Borel function \(\gamma \colon L \to [0, \infty]\) such that \[\int (\gamma^2(z) \wedge \gamma(z))\, \n (dz) < \infty\] and 
	\begin{align*}
		\|\bb(f,x)- \bb(f,y) \| + \| \bs(f,x)-\bs(f,y) \| &\leq C  \|x -y \|,  \\
		\| \bk (f, x, z) - \bk (f, y, z)\| &\leq  \gamma(z)  \|x - y\|, \\
		\| \bk (f, x, z)\| & \leq  \gamma(z) 
	\end{align*}
	for all \(f \in F\), \(x,y \in \bR^d\) and \(z \in L\).
\end{condition}

\begin{theorem} \label{theo: comparison}
Suppose that Condition~\ref{cond: local bdd} holds, that \(u \colon \bR_+ \times \bR^d \to \bR\) is a bounded viscosity subsolution and that \(v \colon \bR_+ \times \bR^d \to \bR\) is a bounded viscosity supersolution to the nonlinear PDE \eqref{eq: PDE G}. Then, \(u \leq v\).
\end{theorem}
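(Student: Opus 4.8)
The plan is to derive the statement from the abstract comparison principle \cite[Corollary~2.23]{hol16}; the work then reduces to checking that the nonlinearity $G$ and the coefficients $\bb,\bs,\bk$ satisfy the hypotheses imposed there. First I would make the Hamilton--Jacobi--Bellman structure of $G$ explicit: for $f\in F$ and $\phi\in C^2_b(\bR^d;\bR)$, writing
\[
G^f(x,\phi) := \langle\nabla\phi(x),\bb(f,x)\rangle + \tfrac12\on{tr}\big[\nabla^2\phi(x)\,\bs\bs^*(f,x)\big] + \int\big[\phi(x+\bk(f,x,z)) - \phi(x) - \langle\nabla\phi(x),h(\bk(f,x,z))\rangle\big]\,\n(dz),
\]
one has $G(x,\phi)=\sup_{f\in F}G^f(x,\phi)$. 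Each $G^f$ is a linear L\'evy-type operator, hence degenerate elliptic and proper, and these properties persist under the supremum over $f$, so $G$ is a degenerate elliptic HJB operator of the type treated in \cite{hol16}. The integral in $G^f$ is well defined and bounded uniformly in $(f,x)$: Taylor's theorem gives $|\phi(x+y)-\phi(x)-\langle\nabla\phi(x),h(y)\rangle|\leq C_\phi(\|y\|^2\wedge1)$, which together with $\|\bk(f,x,z)\|\leq\gamma(z)$ and $\int(\gamma^2(z)\wedge\gamma(z))\,\n(dz)<\infty$ from Condition~\ref{cond: local bdd} bounds the integrand by $C_\phi(\gamma^2(z)\wedge1)$.

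Next I would verify the quantitative estimates demanded by \cite[Corollary~2.23]{hol16}. The first is Lipschitz dependence of $G$ on the state variable: for fixed $\phi\in C^2_b(\bR^d;\bR)$ one estimates $|G^f(x,\phi)-G^f(y,\phi)|$ term by term, using $\|\bb(f,x)-\bb(f,y)\|+\|\bs(f,x)-\bs(f,y)\|\leq C\|x-y\|$ and $\|\bk(f,x,z)-\bk(f,y,z)\|\leq\gamma(z)\|x-y\|$ (with a second-order expansion for the jump term), and then takes $\sup_{f\in F}$ to obtain $|G(x,\phi)-G(y,\phi)|\leq L_\phi\|x-y\|$, with $L_\phi$ depending only on $\|\nabla\phi\|_\infty$, $\|\nabla^2\phi\|_\infty$, $C$ and $\int(\gamma^2\wedge\gamma)\,\n$. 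The second, and the heart of the matter, is the structure condition controlling $G$ at doubled points: for the penalization $\varphi_\alpha(x,y)=\tfrac{\alpha}{2}\|x-y\|^2$ used in the Crandall--Ishii--Lions argument one needs a modulus $\omega$ with
\[
G\big(\hat x_\alpha,\varphi_\alpha(\cdot\,,\hat y_\alpha)\big) - G\big(\hat y_\alpha,-\varphi_\alpha(\hat x_\alpha,\cdot)\big) \leq \omega\big(\alpha\|\hat x_\alpha-\hat y_\alpha\|^2 + \|\hat x_\alpha-\hat y_\alpha\|\big)
\]
at the relevant maximum points $(\hat x_\alpha,\hat y_\alpha)$. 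Here one splits, uniformly in $f\in F$, the nonlocal term into small jumps $\{z\colon\gamma(z)\leq\delta\}$ and large jumps: the large-jump part together with the drift and diffusion terms is handled as in the local second-order theory via the Lipschitz bounds and the Jensen--Ishii matrix inequality, while the small-jump remainder is dominated by $C_\phi\int_{\gamma(z)\leq\delta}\gamma^2(z)\,\n(dz)$, which tends to $0$ as $\delta\to0$ by dominated convergence since $\int\gamma^2\,\n<\infty$.

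Once these hypotheses are matched, the conclusion $u\leq v$ follows directly from \cite[Corollary~2.23]{hol16}; boundedness of $u$ and $v$ is what lets us apply the comparison principle without any growth correction. I expect the main obstacle to be the bookkeeping in the small-jump/large-jump decomposition of the nonlocal term at doubled points: the split must be carried out uniformly over the control parameter $f\in F$ and over compact sets of states, so that the small-jump remainder genuinely vanishes and the large-jump, drift and diffusion contributions recombine into a single modulus. All the inputs for this --- the uniform Lipschitz bounds on $\bb,\bs,\bk$, the uniform jump bound $\|\bk(f,x,z)\|\leq\gamma(z)$, and the $\n$-integrability of $\gamma^2\wedge\gamma$ --- are exactly what Condition~\ref{cond: local bdd} supplies, so beyond translating our setting into the abstract framework of \cite{hol16} no new analytic ingredient is required.
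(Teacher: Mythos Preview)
Your proposal is correct and follows the same overall strategy as the paper: both reduce the claim to \cite[Corollary~2.23]{hol16} and both recognise that the jump integral must be split according to the level sets of \(\gamma\) (rather than the jump size~\(|z|\)) so that the small-jump remainder vanishes uniformly in \(f\in F\). The execution differs, however. The paper does not verify the abstract hypotheses of \cite[Corollary~2.23]{hol16} directly; instead it rewrites the jump operator as \(J^f = H^f + \langle\nabla\phi, K^f\rangle\), where \(K^f\) is a Lipschitz drift correction that absorbs the discrepancy between the truncation function \(h\) and the identity, so that \(H^f\) matches the specific HJB form of \cite[Definition~2.27]{hol16} up to the cosmetic replacement \(|z|\rightsquigarrow\gamma(z)\). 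It then invokes \cite[Lemma~2.32, Proposition~2.33]{hol16}, which already establish the structure condition for that form. Your route---checking the doubled-point estimate by hand---is more self-contained but amounts to reproducing those intermediate results; the paper's route is more economical since it reuses them. One small slip: you appeal to \(\int\gamma^2\,d\n<\infty\) for the small-jump remainder, but Condition~\ref{cond: local bdd} only provides \(\int(\gamma^2\wedge\gamma)\,d\n<\infty\). This is harmless, since on \(\{\gamma\leq\delta\}\) with \(\delta<1\) one has \(\gamma^2=\gamma^2\wedge\gamma\), so dominated convergence still applies.
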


\begin{proof}
	Using arguments as in \cite[Lemma~2.32, Proposition~2.33]{hol16}, the result can be deduced from \cite[Corollary~2.23]{hol16}. For brevity, we provide not all details. Instead, we sketch the idea that relates our setting to the HJB framework from \cite[Definition~2.27]{hol16}. Let \(\gamma \colon L \to [0,\infty]\) be as in Condition~\ref{cond: local bdd}. For \(\phi \in C^2_b (\bR^d; \bR)\), set 
	\begin{align*}
		J^f (x) := \int\, \big[ \phi(x + \bk (f, x, z)) - \phi(x) - \langle \nabla \phi(x) , h (\bk (f, x, z)) \rangle\big]\, \n (dz).
	\end{align*}
	Taking some \(\kappa \in (0, 1)\), we split the operator \(J^f\) as follows: 
	\begin{align*}
		J^f (x) &= H^f (x) + \langle \nabla \phi (x), K^f (x) \rangle, \phantom \int 
	\end{align*}
with 
\begin{align*}
H^f (x) &:= \int_{\{\gamma \leq \kappa\}} \big[ \phi(x + \bk (f, x, z)) - \phi(x) - \langle \nabla \phi(x) , \bk (f, x, z) \rangle\big]\, \n (dz) 
		\\&\qquad \qquad + \int_{\{ \kappa < \gamma \leq 1\}} \big[ \phi(x + \bk (f, x, z)) - \phi(x) - \langle \nabla \phi(x) , \bk (f, x, z) \rangle\big]\, \n (dz)
		\\& \qquad \qquad + \int_{\{ \gamma > 1\}} \big[ \phi(x + \bk (f, x, z)) - \phi(x) \big]\, \n (dz),
  \\
K^f (x) &:=  \int_{\{\gamma \leq 1\}} \big(\bk (f, x, z) - h (\bk (f, x, z)) \big) \, \n (dz) - \int_{\{\gamma > 1\}}  h (\bk (f, x, z)) \, \n (dz).
\end{align*}
At this point, we notice that \(K^f\) is well-defined. 
Indeed, regarding the first term, observe that, for \(z \in \{\gamma \leq 1\}\) and all \(f \in F, x \in \bR^d\),
\[
\| \bk (f, x, z) - h (\bk (f, x, z))\| \leq C \|\bk (f, x, z)\| \1_{\{\|\bk (f, x, z)\|\, > \,\varepsilon\}} \leq C\, \|\bk (f, x, z)\|^2 \leq C ( \gamma^2 (z) \wedge 1),
\]
where \(\varepsilon > 0\) is such that \(h (y) = y\) for \(\|y\| \leq \varepsilon\).\footnote{Such an \(\varepsilon\) exists by definition of a truncation function.} Similarly, for the second term, we record that
\[
\|h (\bk(f,x,z)\| \1_{\{\gamma(z)\, >\, 1\}} \leq C\, ( \gamma^2(z) \wedge 1 ), \quad (f,x,z) \in F \times \bR^d \times L,
\]
where the constant \(C > 0\) bounds the truncation function \(h\).
This shows that \(K^f\) is well-defined under Condition~\ref{cond: local bdd}. Returning to the decomposition \(J^f = H^f + \langle \nabla \phi, K^f\rangle\), we notice that \(H^f\) is of a similar form as the HJB operator from \cite[Definition~2.27]{hol16} with the exception that \(\n\) is not defined on \(\mathcal{B}(\bR^d)\) but on \(\mathcal{B}(L)\), and the decomposition is not done with the sets \(\{|z| \leq \kappa\}, \{\kappa < |z| \leq 1\}\) and \(\{|z| > 1\}\) but with \(\{\gamma \leq \kappa\}, \{\kappa < \gamma \leq 1\}\) and \(\{\gamma > 1\}\). A careful inspection of the proofs for \cite[Lemma~2.32, Proposition~2.33]{hol16} shows that the argument for the validity of the prerequisites of \cite[Corollary~2.23]{hol16} only needs cosmetic modifications (namely, \(|z|\) in the estimates from \cite{hol16} has to be replaced by \(\gamma (z)\) in our setting). The term \(K^f\) can be added to the drift coefficient \(\bb\) and therefore be treated as in \cite{hol16}. 
Here, we emphasise that, under Condition~\ref{cond: local bdd}, the function \(x \mapsto K^f(x)\) is Lipschitz continuous, uniformly in \(f \in F\). To see this, notice that 
\begin{equation} \label{eq: small gamma weg}
\begin{split}
            \int_{\{\gamma \leq 1\}} \big( \bk (f, x, z) &- h (\bk (f, x, z)) \big)\, \n (f, dz) 
            \\&= \int_{\{\epsilon < \gamma \leq 1\}} \big( \bk (f, x, z) - h (\bk (f, x, z)) \big)\, \n (f, dz),
\end{split}
\end{equation}
for some \(\epsilon > 0\) that satisfies \(h(y) = y\) for all \(\|y\| \leq \epsilon\), since the assumption \(\|\bk\| \leq \gamma\) yields \(\gamma > \epsilon\) in case \(\|\bk\| > \epsilon\). This readily implies the desired Lipschitz continuity of  \(K^f\).
Omitting for brevity full details of the argument sketched above, we conclude the claim of Theorem~\ref{theo: comparison} from \cite[Corollary~2.23]{hol16}.
\end{proof}

\end{document}